\mathchardef\mhyphen="2D
\newcommand{\mb}{\mathbb}
\newcommand{\ms}{\mathscr}
\newcommand{\mc}{\mathcal}
\newcommand{\mf}{\mathfrak}
\newcommand{\tsf}{\textsf}
\newcommand{\msf}{\mathsf}
\newcommand{\n}{\enspace}
\newcommand{\tx}{\text}
\newcommand{\ol}{\overline}
\newcommand{\innext}{\msf{E}^{(\tx{\normalfont in})}}
\newcommand{\outext}{\msf{E}^{(\tx{\normalfont out})}}
\newcommand{\wt}{\widetilde}
\newcommand{\out}{\tx{\normalfont out}}
\newcommand{\inn}{\tx{\normalfont in}}
\newcommand{\innint}{\tx{\normalfont in-int}}
\newcommand{\outint}{\tx{\normalfont out-int}}
\newcommand{\coeff}{\tx{\normalfont coeff}}
\newcommand{\iref}[2]{(\hyperref[#2]{\ref*{#1}.\ref*{#2}})}
\newcommand{\mult}{\mathrm{mult}}
\newcommand{\EHC}{\msf{EHC}}
\newcommand{\bEHC}{\msf{b\mhyphen EHC}}
\newcommand{\EPC}{\msf{EPC}}
\newcommand{\bEPC}{\msf{b\mhyphen EPC}}
\newcommand{\email}[1]{\href{mailto:#1}{\textcolor{NavyBlue}{\texttt{#1}}}}
\newcommand{\sep}{\tx{\normalfont sep}}
\theoremstyle{theorem}
\newtheorem{theorem}{Theorem}[section]
\newtheorem{fact}[theorem]{Fact}
\newtheorem{proposition}[theorem]{Proposition}
\newtheorem{observation}[theorem]{Observation}
\newtheorem{lemma}[theorem]{Lemma}
\newtheorem{corollary}[theorem]{Corollary}
\newtheorem{conjecture}[theorem]{Conjecture}
\newtheorem{question}[theorem]{Question}
\theoremstyle{definition}
\newtheorem{remark}[theorem]{Remark}
\newtheorem{example}[theorem]{Example}
\newtheoremstyle{TheoremNum}
{\topsep}{\topsep}              
{\itshape}                      
{}                              
{\bfseries}                     
{.}                             
{ }                             
{\thmname{#1}\thmnote{ \bfseries #3}}
\theoremstyle{TheoremNum}
\newtheorem{reptheorem}{Theorem}
\newtheorem{repproposition}{Proposition}
\newtheorem{replemma}{Lemma}
\newcommand{\wh}{\widehat}
\renewcommand{\epsilon}{\varepsilon}
\renewcommand{\deg}{\text{deg}}
\newcommand{\remove}[1]{}
\begin{document}

	\title{On higher multiplicity hyperplane and polynomial covers\\for symmetry preserving subsets of the hypercube}

	\author{
		Arijit Ghosh\thanks{Indian Statistical Institute, Kolkata, India}
		\and
		Chandrima Kayal\footnotemark[1]
		\and 
		Soumi Nandi\footnotemark[1]
		\and
		S. Venkitesh\thanks{Department of Computer Science, University of Haifa, Haifa, Israel.  Webpage: \url{https://sites.google.com/view/venkitesh}\n\n Email: \email{venkitesh.mail@gmail.com}}
	}

	\date{}
	\maketitle
	
	\begin{abstract}
		Alon and F\"uredi (European J. Combin. 1993) gave a tight bound for the following hyperplane covering problem: find the minimum number of hyperplanes required to cover all points of the \(n\)-dimensional hypercube \(\{0,1\}^n\) except the origin.  Their proof is among the early instances of the \emph{polynomial method}, which considers a natural polynomial (a product of linear factors) associated to the hyperplane arrangement, and gives a lower bound on its degree, whilst being oblivious to the (product) structure of the polynomial.  Thus, their proof gives a lower bound for a \emph{weaker} polynomial covering problem, and it turns out that this bound is tight for the \emph{stronger} hyperplane covering problem.
		
		In a similar vein, solutions to some other hyperplane covering problems were obtained, via solutions of corresponding weaker polynomial covering problems, in some special cases in the works of the fourth author (Electron. J. Combin. 2022), and the first three authors (Discrete Math. 2023).  In this work, we build on these and solve a hyperplane covering problem for general symmetric sets of the hypercube, where we consider hyperplane covers with higher multiplicities.  We see that even in this generality, it is enough to solve the corresponding polynomial covering problem.  Further, this seems to be the limit of this approach as far as covering symmetry preserving subsets of the hypercube is concerned.  We gather evidence for this by considering the class of \emph{blockwise} symmetric sets of the hypercube (which is a strictly larger class than symmetric sets), and note that the same proof technique seems to only solve the polynomial covering problem.
	\end{abstract}

	
	\paragraph*{Notations.}  \(\mb{R}\) denotes the set of all real numbers, \(\mb{Z}\) denotes the set of all integers, \(\mb{N}\) denotes the set of all nonnegative integers, and \(\mb{Z}^+\) denotes the set of all positive integers.  \([a,b]\) denotes the closed interval of all integers between \(a\) and \(b\); further, we denote \([n]\coloneqq[1,n]\).  \(\mb{R}[\mb{X}]\) denotes the polynomial ring over the field \(\mb{R}\) and a collection of indeterminates \(\mb{X}\), where either there are \(n\) indeterminates \(\mb{X}=(X_1,\ldots,X_n)\), or there are \(N=n_1+\cdots+n_k\) indeterminates partitioned into \(k\) blocks as \(\mb{X}=(\mb{X}_1,\ldots,\mb{X}_k)\) with each \(\mb{X}_j=(X_{j,1},\ldots,X_{j,n_j})\).
	
\section{Introduction and overview}\label{sec:intro}

We will work over the field \(\mb{R}\), and consider the \(n\)-variate polynomial ring \(\mb{R}[\mb{X}]\).  A classic result by Alon and F\"uredi~\cite{alon-furedi} states that any collection of (affine) hyperplanes\footnote{We are interested in \emph{affine} hyperplanes, that is, all possible translates of codimension-1 subspaces of \(\mb{R}^n\), and not just the subspaces themselves.  However, we will suppress the adjective `affine' in the rest of this paper.} in \(\mb{R}^n\), whose union contains every point of the hypercube (or Boolean cube) \(\{0,1\}^n\) except the all-zeros point \(0^n\coloneqq(0,\ldots,0)\), must have at least \(n\) hyperplanes.  This lower bound is also tight, attained by the collection of hyperplanes defined by the equations: \(X_i=1,\,i\in[n]\).\footnote{This result of Alon and F\"uredi~\cite{alon-furedi} is, in fact, true over any field \(\mb{F}\), and not just for \(\mb{F}=\mb{R}\).}  Further, the lower bound proof by~\cite{alon-furedi} is among the early instances of the \emph{polynomial method} in combinatorics.  Note that the union of any finite collection of hyperplanes in \(\mb{R}^n\), as a set of points, is exactly equal to the zero set of the product of the affine linear polynomials defining the individual hyperplanes.  So the lower bound on the number of hyperplanes follows from a lower bound on the degree of this \emph{product polynomial}.

An interesting point to note in the lower bound proof by~\cite{alon-furedi} is that the polynomial method is oblivious to the \emph{product structure} of the polynomials corresponding to collections of hyperplanes, or \emph{any other structural property} of polynomials, and is only sensitive to the degree of the polynomials.  In other words, we may as well consider a \emph{polynomial covering problem} satisfying the same vanishing conditions -- find the minimum degree of a polynomial, \emph{among all unstructured polynomials}, that vanish at every point of \(\{0,1\}^n\) except \(0^n\) -- and the proof by the polynomial method goes through.  Therefore, in hindsight, it is amazing that the lower bound for the \emph{weaker} polynomial covering problem is, in fact, tight for the \emph{stronger} hyperplane covering problem.  In this work, we are interested in further exploring this power of the polynomial method in giving tight bounds for some hyperplane covering problems by simply considering the corresponding weaker polynomial covering problems.

In order to describe our motivations as well as our results, let us first fix some terminologies and notations.  We will identify a hyperplane \(H\) in \(\mb{R}^n\) with its defining affine linear polynomial \(H(\mb{X})\).  Let \(t\ge1,\,\ell\in[0,t-1]\), and consider any subset \(S\subsetneq\{0,1\}^n\).  We define
\begin{itemize}
	\item  a \tsf{\((t,\ell)\)-exact hyperplane cover} for \(S\) to be a finite collection of hyperplanes (considered as a multiset) in \(\mb{R}^n\) such that each point in \(S\) is contained in at least \(t\) hyperplanes, and each point in \(\{0,1\}^n\setminus S\) is contained in exactly \(\ell\) hyperplanes.
	
	\item  a \tsf{\((t,\ell)\)-exact polynomial cover} for \(S\) to be a nonzero polynomial that vanishes at each point in \(S\) with multiplicity\footnote{We say that a polynomial \(P\) vanishes at a point \(a\) with multiplicity at least \(t\) if all the derivatives of \(P\) having order at most \(t-1\) vanish at \(a\).  We will give a formal definition in the Preliminaries (Section~\ref{subsec:prelims-vanishing}).} at least \(t\), and vanishes at each point in \(\{0,1\}^n\setminus S\) with multiplicity exactly \(\ell\).
\end{itemize}
Let \(\EHC_n^{(t,\ell)}(S)\) denote the minimum size of a \((t,\ell)\)-exact hyperplane cover for \(S\), and let \(\EPC_n^{(t,\ell)}(S)\) denote the minimum degree of a \((t,\ell)\)-exact polynomial cover for \(S\).  In these notations,~\cite{alon-furedi} show the following.
\begin{theorem}[{\cite{alon-furedi}}]\label{thm:alon-furedi}
	\(\EHC_n^{(1,0)}(\{0,1\}^n\setminus\{0^n\})=\EPC_n^{(1,0)}(\{0,1\}^n\setminus\{0^n\})=n\).
\end{theorem}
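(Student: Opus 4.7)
My plan is to establish the chain $n \leq \EPC_n^{(1,0)}(\{0,1\}^n\setminus\{0^n\}) \leq \EHC_n^{(1,0)}(\{0,1\}^n\setminus\{0^n\}) \leq n$, which settles both equalities simultaneously. The rightmost inequality is a direct construction, the middle inequality is an easy reduction from hyperplane covers to polynomial covers, and the leftmost inequality is the substantive step.

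For $\EHC \leq n$, I would exhibit the explicit family of $n$ hyperplanes $\{X_i = 1 : i \in [n]\}$: a point $(a_1,\ldots,a_n) \in \{0,1\}^n$ lies on the $i$-th hyperplane if and only if $a_i = 1$, so every nonzero point is covered while $0^n$ lies on none of them, yielding a valid $(1,0)$-exact hyperplane cover. For the reduction $\EPC \leq \EHC$, given any $(1,0)$-exact hyperplane cover $\{H_1,\ldots,H_m\}$, the product polynomial $\prod_{j=1}^m H_j(\mb{X})$ has degree $m$, vanishes at every point of $\{0,1\}^n \setminus \{0^n\}$, and is nonzero at $0^n$ (since $0^n$ lies on no $H_j$), which is precisely a $(1,0)$-exact polynomial cover of degree $m$.

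The heart of the argument is the lower bound $\EPC \geq n$. Let $P$ be any nonzero polynomial with $P(a) = 0$ for all $a \in \{0,1\}^n \setminus \{0^n\}$ and $P(0^n) \neq 0$. The key step is \emph{multilinearization}: replacing each higher power $X_i^k$ (for $k \geq 2$) by $X_i$ throughout $P$ produces a multilinear polynomial $\widetilde{P}$ with $\deg \widetilde{P} \leq \deg P$ that agrees with $P$ on all of $\{0,1\}^n$. Since the evaluation map from the $2^n$-dimensional space of multilinear polynomials in $X_1,\ldots,X_n$ to $\mb{R}^{\{0,1\}^n}$ is a bijection, $\widetilde{P}$ must equal $P(0^n) \cdot \prod_{i=1}^n (1 - X_i)$ as formal polynomials, and the latter has degree exactly $n$ because $P(0^n) \neq 0$. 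Hence $\deg P \geq \deg \widetilde{P} = n$. I expect no major obstacle: the only substantive ingredient is this multilinearization and uniqueness argument, which is completely standard, and no higher-multiplicity subtleties enter the picture because $(t,\ell) = (1,0)$.
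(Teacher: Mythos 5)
Your proposal is correct. The paper itself does not prove Theorem~\ref{thm:alon-furedi} --- it is quoted from Alon and F\"uredi with a citation, and only the general inequality $\EHC_n^{(t,\ell)}(S)\ge\EPC_n^{(t,\ell)}(S)$ is proved (in Appendix~\ref{app:EHC-EPC}); your product-polynomial reduction is exactly that argument specialized to $(t,\ell)=(1,0)$, and your upper-bound construction $\{X_i=1:i\in[n]\}$ is the one the paper records in its introduction. Your overall strategy (sandwiching $\EHC$ between $\EPC$ and an explicit construction) is precisely the proof technique the paper describes and uses throughout. The one place where you diverge from the route usually attributed to Alon--F\"uredi is the degree lower bound: they (and the later works this paper builds on) derive it from the Combinatorial Nullstellensatz, by observing that a degree-$<n$ polynomial with the stated vanishing pattern would force a degree-$n$ polynomial with nonzero $X_1\cdots X_n$-coefficient to vanish on all of $\{0,1\}^n$. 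Your multilinearization-plus-interpolation argument is an equally valid and, if anything, more self-contained substitute: the evaluation map on multilinear polynomials being a bijection onto $\mb{R}^{\{0,1\}^n}$ immediately pins down $\widetilde{P}$ as $P(0^n)\prod_{i=1}^n(1-X_i)$, of degree exactly $n$. Both routes are standard and the substance is the same; no gaps.
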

\noindent It is obvious from the definitions that, in general, we have \(\EHC_n^{(t,\ell)}(S)\ge\EPC_n^{(t,\ell)}(S)\).  For completeness, we give a quick proof in Appendix~\ref{app:EHC-EPC}.  In the present work, we are broadly concerned with the following question.
\begin{question}\label{ques:main}
	Given a proper subset \(S\subsetneq\{0,1\}^n\) and integers \(t\ge1,\,\ell\in[0,t-1]\), under what conditions can we say that \(\EHC_n^{(t,\ell)}(S)=\EPC_n^{(t,\ell)}(S)\)?
\end{question} 

\subsection{Motivation}\label{subsec:motivation}

The present work could be considered a sequel to earlier works by the fourth author~\cite{venkitesh-2022-covering}, and the first three authors~\cite{ghosh-kayal-nandi-2023-covering}.  The work~\cite{venkitesh-2022-covering} relies heavily on the polynomial method using Alon's Combinatorial Nullstellensatz~\cite{alon_1999} (also see Buck, Coley, and Robbins~\cite{buck1992generalized}, and Alon and Tarsi~\cite{alon1992colorings}), and the work~\cite{ghosh-kayal-nandi-2023-covering} relies heavily on a recent \emph{multiplicity extension} of the Combinatorial Nullstellensatz given by Sauermann and Wigderson~\cite{sauermann-wigderson-2022-covering}.  The problems of concern, in the two earlier works as well as in the present work, belong to a larger class of questions that have been of interest for a long time, and have rich literature.  We mention some of these related works in Section~\ref{subsec:related}.

Let us now detail the primary motivations for our present work.
\begin{itemize}[leftmargin=*]
	\item  As a multiplicity extension of Theorem~\ref{thm:alon-furedi} for the polynomial covering problem, Sauermann and Wigderson~\cite{sauermann-wigderson-2022-covering} determined the following.
	\begin{theorem}[{\cite{sauermann-wigderson-2022-covering}}]\label{thm:sauermann-wigderson}
		For \(t\ge1,\,\ell\in[0,t-1]\), we have
		\[
		\EPC_n^{(t,\ell)}(\{0,1\}^n\setminus\{0^n\})=\begin{cases}
			n+2t-2&\tx{if}\n\ell=t-1,\\
			n+2t-3&\tx{if}\n\ell<t-1\le\big\lfloor\frac{n+1}{2}\big\rfloor.
		\end{cases}
		\]
	\end{theorem}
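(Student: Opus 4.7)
My plan splits naturally into constructing explicit polynomials for the upper bound and proving matching lower bounds via a multiplicity-extension of the combinatorial Nullstellensatz over the hypercube.

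For the upper bound, the key building block is
\[
N(\mathbf{X}) \;\coloneqq\; \sum_{i=1}^{n} X_i(1 - X_i),
\]
which vanishes identically on \(\{0, 1\}^n\) (since each summand does) and has multiplicity exactly \(1\) at every hypercube point \(a\), as seen by Taylor-expanding around \(a\) and checking that the lowest-order contribution is a nonzero linear form. In the case \(\ell = t-1\), the polynomial \(P \;=\; N^{t-1} \cdot \prod_{i=1}^{n} (1 - X_i)\) has degree \(n + 2t - 2\); using that the Alon--F\"uredi factor has multiplicity \(k\) at each weight-\(k\) point and \(0\) at \(0^n\), one checks that \(P\) has multiplicity exactly \(t-1\) at \(0^n\) and at least \(t\) at every nonzero hypercube point. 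In the case \(\ell < t-1\), I would decompose \(P = N^\ell \cdot P'\), where \(P'\) is an optimal \((t-\ell, 0)\)-exact polynomial cover of \(\{0,1\}^n \setminus \{0^n\}\); since \(N^\ell\) has multiplicity exactly \(\ell\) at every hypercube point, this gives total degree \(2\ell + \deg P' = 2\ell + (n + 2(t-\ell) - 3) = n + 2t - 3\), as claimed.

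Thus the upper-bound problem reduces to constructing, for each \(t' \coloneqq t - \ell \leq \lfloor (n+1)/2 \rfloor + 1\), a polynomial \(P'\) of degree \(n + 2t' - 3\) with multiplicity \(0\) at \(0^n\) and multiplicity at least \(t'\) on \(\{0,1\}^n \setminus \{0^n\}\). For small \(t'\), a natural candidate is \(\prod_i(1-X_i) \cdot \prod_{k=1}^{t'-1}(|\mathbf{X}|-k)^{t'-k}\), which meets the bound for \(t' \leq 3\) but overshoots by \(\binom{t'-2}{2}\) once \(t' \geq 4\). The remedy would be to replace some factors \((|\mathbf{X}|-k)\) by different symmetric polynomials (for instance \(\sum_i X_i^2 - k\)) that share the same vanishing behavior at the weight-\(k\) level but are polynomially independent, so that their combined contribution to the degree is smaller than a naive count predicts. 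This economical choice, which seems to rely precisely on the hypothesis \(t' - 1 \leq \lfloor(n+1)/2\rfloor\), is the main technical obstacle on the upper-bound side.

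For the lower bound, the plan is to apply a multiplicity-extension of Alon's combinatorial Nullstellensatz over \(\{0,1\}^n\). Given a polynomial \(P\) of minimum degree satisfying the vanishing hypotheses, by averaging over the natural \(S_n\)-action (which preserves the hypercube, the point \(0^n\), and multiplicities at each orbit), I may assume \(P\) is symmetric. Then I would pair \(P\) with a carefully chosen test polynomial \(Q\) to produce an algebraic identity whose left-hand side is a sum over \(\{0,1\}^n\) of Taylor coefficients of \(P\) of order \(\leq t-1\) (weighted by \(Q\)), and whose right-hand side is a nonzero multiple of a leading coefficient of \(P\). The multiplicity hypotheses force most of these Taylor coefficients to vanish, leaving behind only contributions controlled by the order-\(\ell\) Taylor data at \(0^n\); a degree count on the survivors then forces \(\deg P \geq n + 2t - 3\), with an extra \(+1\) in the boundary case \(\ell = t-1\) because the slack at \(0^n\) is tightest there. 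The main challenge is choosing \(Q\) of the correct degree so that the identity is non-degenerate, and the hypothesis \(t - 1 \leq \lfloor(n+1)/2\rfloor\) is expected to be exactly what is needed to guarantee the existence of such a \(Q\).
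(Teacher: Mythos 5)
First, a framing point: the paper does not prove Theorem~\ref{thm:sauermann-wigderson}; it is quoted from Sauermann and Wigderson, so your proposal is in effect an attempt to reprove their main theorem. Judged on its own terms, it contains two genuine gaps.

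On the upper bound: the construction \(N^{t-1}\prod_i(1-X_i)\) for \(\ell=t-1\) is correct (multiplicities of a product add, giving exactly \(t-1\) at \(0^n\) and at least \(t-1+|a|\ge t\) at a weight-\(|a|\) point), and the reduction of the case \(\ell<t-1\) to a \((t-\ell,0)\)-exact cover of degree \(n+2(t-\ell)-3\) is also correct. But that \((t',0)\)-cover for \(t'\ge4\) is precisely what you never construct, and the proposed remedy cannot work as stated: the degree of a product is the \emph{sum} of the degrees of its factors, so no replacement of the factors \(\sum_iX_i-k\) by other polynomials ``with the same vanishing behaviour'' can make the product's degree smaller than the naive count, and \(\sum_iX_i^2-k\) has degree \(2\), which makes matters worse. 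The tight example is not a product of symmetric factors; finding it is one of the main contributions of Sauermann--Wigderson, and your plan reduces the upper bound to exactly that open step.

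On the lower bound: (a) the symmetrization step is not justified. Averaging \(P\) over \(\mf{S}_n\) preserves \(\mult(\cdot,a)\ge t\) at the nonzero points, but the order-\(\ell\) derivatives of \(P\) at \(0^n\) can cancel in the average, so the symmetrized polynomial may have multiplicity strictly greater than \(\ell\) at \(0^n\), or even vanish identically --- destroying the very hypothesis the lower bound must exploit. (b) The core of the argument --- the choice of the test polynomial \(Q\), the precise identity, and why the degree count closes with the stated dependence on whether \(\ell=t-1\) or \(\ell\le t-2\) --- is left entirely unspecified. This is where all the difficulty lies: the passage from \(n+2t-2\) to \(n+2t-3\) under the weaker hypothesis \(\ell\le t-2\) does not follow from any off-the-shelf multiplicity Nullstellensatz (Ball--Serra-type bounds stop at \(n+2t-2\)), and is the hard part of the Sauermann--Wigderson theorem. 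As written, both halves of your proof defer the essential content to steps that are roughly as hard as the theorem itself.
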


	\item  In a remarkable development, using techniques different from the polynomial method, Clifton and Huang~\cite{clifton2020almost} proved the following bounds for the hyperplane covering problem.
	\begin{theorem}[{\cite{clifton2020almost}}]\label{thm:clifton-huang}
		For all \(n\ge3,\,t\ge4\), we have
		\[
		n+t+1\le\EHC_n^{(t,0)}(\{0,1\}^n\setminus\{0^n\})\le n+\binom{t}{2},
		\]
		Further, for \(n\ge2,\,t=2,3\), we have \(\EHC_n^{(t,0)}(\{0,1\}^n\setminus\{0^n\})=n+\displaystyle\binom{t}{2}\).
	\end{theorem}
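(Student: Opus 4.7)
The plan is to prove the upper bound by an explicit construction and to obtain the lower bound through the polynomial method, invoking Theorem~\ref{thm:sauermann-wigderson} as the main engine.

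\textbf{Upper bound.}  I would construct a \((t,0)\)-exact hyperplane cover of \(\{0,1\}^n\setminus\{0^n\}\) of size \(n+\binom{t}{2}\) as follows.  Take the \(n\) coordinate hyperplanes \(\{X_i=1\}_{i\in[n]}\) and, for each \(k\in[1,t-1]\), include \(t-k\) copies of the slice hyperplane \(X_1+\cdots+X_n=k\).  A point of Hamming weight \(w\ge 1\) lies on exactly \(w\) coordinate hyperplanes; if \(w\le t-1\) it additionally lies on the \(t-w\) copies of the slice \(\sum_i X_i=w\), giving total multiplicity \(w+(t-w)=t\); if \(w\ge t\) the coordinate hyperplanes alone already cover it \(w\ge t\) times.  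None of these hyperplanes passes through \(0^n\).  The total count is \(n+\sum_{k=1}^{t-1}(t-k)=n+\binom{t}{2}\).

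\textbf{Lower bound.}  The product of the defining affine linear forms of any \((t,0)\)-exact hyperplane cover is a \((t,0)\)-exact polynomial cover of the same degree, hence \(\EHC_n^{(t,0)}(S)\ge\EPC_n^{(t,0)}(S)\).  Applying Theorem~\ref{thm:sauermann-wigderson} with \(\ell=0\), provided \(t-1\le\lfloor(n+1)/2\rfloor\), we obtain
\[
\EHC_n^{(t,0)}(\{0,1\}^n\setminus\{0^n\})\ \ge\ \EPC_n^{(t,0)}(\{0,1\}^n\setminus\{0^n\})\ =\ n+2t-3.
\]
For \(t=2\) this is \(n+1=n+\binom{2}{2}\); for \(t=3\) (valid when \(n\ge 3\)) this is \(n+3=n+\binom{3}{2}\); and for \(t\ge 4\) we have \(n+2t-3\ge n+t+1\).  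Each matches the claimed lower bound.  The lone remaining corner \(t=3,\,n=2\) can be handled by direct counting: no line in \(\mathbb{R}^2\) avoiding \(0^2\) contains more than \(2\) of the three nonzero Boolean points, so achieving total coverage \(3\cdot 3=9\) forces at least \(\lceil 9/2\rceil=5=n+\binom{3}{2}\) lines.

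\textbf{Main obstacle.}  The principal gap in this route appears when \(t\ge 4\) and \(n<2t-3\), where the side condition in Theorem~\ref{thm:sauermann-wigderson} fails.  In that small-\(n\) regime I would instead adapt the inductive peel-off argument of~\cite{clifton2020almost}: pick a hyperplane \(H\) in a putative small cover, restrict the remaining cover to the Boolean points lying in \(H\) or to a suitable complementary slice, and recurse on \(n-1\).  The delicate step is tracking the multiplicity condition under restriction when \(H\) is neither a coordinate hyperplane nor a symmetric slice; this is precisely the part of the Clifton--Huang proof that does genuine new combinatorial work beyond the polynomial method, and is what I expect to be the real bottleneck.  A deeper and more structural obstacle is the gap between \(n+\binom{t}{2}\) and \(n+2t-3\) for \(t\ge 5\): the polynomial method is blind to the product structure of the covering polynomial, so any proof narrowing this gap must use the affine-linear structure of individual hyperplanes in an essential way.
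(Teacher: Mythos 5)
This theorem is only cited in the paper (from Clifton--Huang), not proved there, so there is no in-paper argument to compare against; I will assess your proposal on its own terms. Your upper bound construction (the \(n\) coordinate hyperplanes \(X_i=1\) together with \(t-k\) copies of \(\sum_iX_i=k\) for \(k\in[1,t-1]\)) is correct and is the standard one. Your treatment of \(t=2,3\) is also complete: Theorem~\ref{thm:sauermann-wigderson} with \(\ell=0\) gives \(n+1\) and \(n+3\) respectively in the stated ranges, and your ad hoc counting for \((n,t)=(2,3)\) (no line avoiding the origin contains all three of \((1,0),(0,1),(1,1)\), so \(\lceil 9/2\rceil=5\) lines are needed) is sound. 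Note, though, that your route is genuinely different from Clifton--Huang's: as the paper remarks, their lower bounds are obtained by techniques other than the polynomial method, whereas you are leaning entirely on the (chronologically later) Sauermann--Wigderson degree bound.

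The genuine gap is exactly where you flag it, and it is not a corner case that can be waved away: for \(t\ge4\) the claimed bound \(n+t+1\) is asserted for \emph{all} \(n\ge3\), but Theorem~\ref{thm:sauermann-wigderson} only yields \(\EPC_n^{(t,0)}=n+2t-3\) under the side condition \(t-1\le\lfloor(n+1)/2\rfloor\), i.e.\ roughly \(n\ge2t-3\). Already \((n,t)=(3,4)\) and \((4,4)\) fall outside this regime, and outside it the polynomial covering number need not be \(n+2t-3\) at all (indeed Sauermann--Wigderson's own analysis shows the answer changes character when \(t\) is large relative to \(n\)), so there is no reason to expect a purely degree-based argument to recover \(n+t+1\) there. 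Your fallback --- ``adapt the inductive peel-off argument of Clifton--Huang'' --- is a citation to the very proof you are supposed to be reconstructing, not an argument; the restriction step you describe (tracking multiplicities when the chosen hyperplane is neither a coordinate hyperplane nor a symmetric slice) is precisely the nontrivial content. As written, your proof establishes the upper bound for all \(n,t\), the exact values for \(t=2,3\), and the lower bound for \(t\ge4\) only when \(n\ge2t-3\); the remaining range is unproved.
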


	\item  We say a subset \(S\subseteq\{0,1\}^n\) is \tsf{symmetric} if \(S\) is closed under permutations of coordinates.  Note that the \tsf{Hamming weight} of any \(x\in\{0,1\}^n\) is defined by \(|x|=|\{i\in[n]:x_i=1\}|\).  Thus, the subset \(S\) is symmetric if and only if
	\[
	x\in S,\,y\in\{0,1\}^n,\,|y|=|x|\quad\implies\quad y\in S.
	\]
	For any symmetric set \(S\subseteq\{0,1\}^n\), we define \(W_n(S)=\{|x|:x\in S\}\).  It is immediate that a symmetric set \(S\) is determined by the corresponding set \(W_n(S)\).  Also for \(i\in[0,n]\), let \(W_{n,i}=[0,i-1]\cup[n-i+1,n]\), and define the symmetric set \(T_{n,i}\subseteq\{0,1\}^n\) by \(W_n(T_{n,i})=W_{n,i}\).\footnote{Here we have \(W_{n,0}=\emptyset\) and \(T_{n,0}=\emptyset\).}
	
	The fourth author~\cite{venkitesh-2022-covering} gave a combinatorial characterization of \(\EPC_n^{(1,0)}(S)\) for all symmetric sets \(S\subsetneq\{0,1\}^n\), as well as a partial result towards answering Question~\ref{ques:main} in this setting.  The characterization is in terms of a simple combinatorial measure.  For any symmetric set \(S\subseteq\{0,1\}^n\), define
	\begin{align*}
	\mu_n(S)&=\max\{i\in[0,\lceil n/2\rceil]:W_{n,i}\subseteq W_n(S)\},\\
	\tx{and}\quad\Lambda_n(S)&=|W_n(S)|-\mu_n(S).
	\end{align*}
	Further, denote \(\ol{\mu}_n(S)\coloneqq\mu_n(\{0,1\}^n\setminus S)\) and \(\ol{\Lambda}_n(S)\coloneqq\Lambda_n(\{0,1\}^n\setminus S)\).
	
	\begin{theorem}[{\cite{venkitesh-2022-covering}}]\label{thm:venkitesh}
		\begin{enumerate}[(a)]
			\item  For any symmetric set \(S\subsetneq\{0,1\}^n\), we have
			\[
			\EPC_n^{(1,0)}(S)=\Lambda_n(S).
			\]
			
			\item  For any symmetric set \(S\subsetneq\{0,1\}^n\) such that \(W_{n,2}\not\subseteq W_n(S)\), we have
			\begin{align*}
				\EHC_n^{(1,0)}(S)=\EPC_n^{(1,0)}(S)&=\Lambda_n(S)\\
				&=\begin{cases}
					|W_n(S)|&\tx{if}\n W_{n,1}\not\subseteq W_n(S),\\
					|W_n(S)|-1&\tx{if}\n W_{n,1}\subseteq W_n(S).
				\end{cases}
			\end{align*}
			
			\item  \(\EHC_n^{(1,0)}(T_{n,2})=\EPC_n^{(1,0)}(T_{n,2})=2=|W_{n,2}|-\mu_n(T_{n,2})=\Lambda_n(T_{n,2})\).
		\end{enumerate}
	\end{theorem}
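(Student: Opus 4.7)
The plan is to prove the three parts in turn, with part~(a) doing most of the work.

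For the upper bound $\EPC_n^{(1,0)}(S) \le \Lambda_n(S)$ in part~(a), set $\mu = \mu_n(S)$ and decompose $W_n(S) = W_{n, \mu} \sqcup A$ with $A \subseteq [\mu, n-\mu]$, noting that $\mu \le \lfloor n/2 \rfloor$ because $S \subsetneq \{0,1\}^n$. I aim for a polynomial
\[
P(\mb{X}) \;=\; Q(\mb{X}) \cdot \prod_{a \in A}\bigl(X_1 + \cdots + X_n - a\bigr),
\]
where $Q$ is a multilinear polynomial of degree exactly $\mu$ that vanishes on every $x \in \{0,1\}^n$ with $|x| \in W_{n,\mu}$ yet is nonzero on every $x$ with $|x| \in [\mu, n-\mu]$; then $\deg P = \mu + |A| = |W_n(S)| - \mu = \Lambda_n(S)$. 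Vanishing on $\{x : |x| < \mu\}$ forces (by M\"obius inversion) $Q$ to be homogeneous of degree $\mu$, so $Q = \sum_{|I|=\mu} c_I \prod_{i \in I} X_i$. The conditions $Q(\mb{1}_{[n] \setminus K}) = 0$ for each $K$ with $|K| = \mu - 1$ give $\binom{n}{\mu-1}$ linear equations in the $\binom{n}{\mu}$ coefficients $c_I$, and a short inductive identity (summing $Q(\mb{1}_{J \setminus \{j^*\}})$ over $j^* \in J$) shows they further imply $Q(\mb{1}_J) = 0$ for every $|J| \ge n - \mu + 1$. Since $n \ge 2\mu$, this system has positive-dimensional solution space, and $S_n$-equivariance guarantees that a Zariski-generic solution satisfies $c_I \ne 0$ for all $|I|=\mu$ and $\sum_{I \subseteq J,\,|I|=\mu} c_I \ne 0$ for every $J$ with $|J| \in [\mu, n-\mu]$, producing the desired $Q$.

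For the lower bound in part~(a), the full $S_n$-symmetrization of a cover $P$ typically vanishes identically, since a full univariate reduction would otherwise force the too-strong $\deg P \ge |W_n(S)|$; so I plan to partially symmetrize instead. Fixing $y \in \{0,1\}^n \setminus S$ with $|y| = k$ and averaging $P$ over the Young stabilizer $S_k \times S_{n-k} \le S_n$ of $y$ yields a polynomial whose restriction to $\{0,1\}^n$ is a function $g$ on the grid $[0,k] \times [0,n-k]$ that vanishes on every antidiagonal $a + b = w$ for $w \in W_n(S)$ and is nonzero at $(k, 0)$. A bivariate degree analysis---restricting $g$ to each antidiagonal (a univariate polynomial of degree at most $\deg g$) and applying a Schwartz--Zippel count on the lattice points of that antidiagonal to force divisibility by $(a+b-w)$---should show that the $2\mu$ extreme antidiagonals collectively contribute only $\mu$ units of degree (they are ``short'' and so can evade divisibility), while each remaining antidiagonal contributes fully, ultimately yielding $\deg P \ge |W_n(S)| - \mu$. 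This delicate bookkeeping, which must capture precisely the saving of $\mu$, is the principal technical obstacle.

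For parts~(b) and~(c), the matching lower bounds follow from part~(a) together with $\EHC \ge \EPC$ (Appendix~\ref{app:EHC-EPC}). For part~(b) I split by $\mu_n(S) \in \{0, 1\}$: if $\mu_n(S) = 0$, use the $|W_n(S)|$ parallel hyperplanes $\{X_1 + \cdots + X_n = w : w \in W_n(S)\}$; if $\mu_n(S) = 1$, replace the two extremal parallel hyperplanes corresponding to $w \in \{0, n\}$ by a single oblique hyperplane $\sum_i c_i X_i = 0$ through $0^n$ and $1^n$, with $\sum_i c_i = 0$ and the $c_i$ generic enough that $\sum_{i \in T} c_i \ne 0$ for every nonempty proper $T \subsetneq [n]$, so that $0^n$ and $1^n$ are its only Boolean points. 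For part~(c), with $S = T_{n,2}$ and $n \ge 4$, I take
\begin{align*}
H_1 &:\ a_1 X_1 + \cdots + a_{n-1} X_{n-1} = 0, \quad \sum_{i=1}^{n-1} a_i = 0,\\
H_2 &:\ X_1 + \cdots + X_{n-1} + (3-n) X_n = 1,
\end{align*}
with the $a_i$ generic enough that $a_i + a_j \ne 0$ for all distinct $i, j \in [n-1]$. A direct case check shows $H_1$ covers exactly $\{0^n, e_n, 1^n - e_n, 1^n\}$ and $H_2$ covers exactly $\{e_i, 1^n - e_i : i \in [n-1]\}$ among Boolean points, so $(H_1 \cup H_2) \cap \{0,1\}^n = T_{n,2}$ and no weight-$k$ point with $k \in [2, n-2]$ lies on either.
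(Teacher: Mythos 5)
The decisive gap is the lower bound $\EPC_n^{(1,0)}(S)\ge\Lambda_n(S)$ in part (a), which you yourself flag as ``the principal technical obstacle'' and do not carry out; since parts (b) and (c) inherit their lower bounds from (a), this is the entire content of the theorem.  Passing to the $S_k\times S_{n-k}$-average and to a bivariate polynomial $g$ on the grid $[0,k]\times[0,n-k]$ is a reasonable start, but the proposed mechanism does not close: vanishing of $g$ on the lattice points of an antidiagonal forces divisibility by $(a+b-w)$ only when the number of those points exceeds the degree of the (remaining factor of the) polynomial, which is precisely the quantity being bounded, and the claim that the $2\mu$ extreme antidiagonals ``collectively contribute only $\mu$ units of degree'' is exactly the theorem restated, not an argument.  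Note that the present paper effectively reproves this statement as the $t=1$, $k=1$ case of Theorem~\ref{thm:EPC-PDC} (equivalently Theorem~\ref{thm:multiplicity-symmetric}) by a different and much shorter route: multiply the cover $P$ by the parallel hyperplanes $\sum_iX_i-w$ for all $w\in W_n(\{0,1\}^n\setminus S)$ except one extremal weight $w_0\in\{\mu_n(S),n-\mu_n(S)\}$ not in $W_n(S)$, so that the product vanishes everywhere on the cube except on the single layer of weight $w_0$; the Alon--F\"uredi/index-complexity bound (Theorem~\ref{thm:alon-furedi-again}, Theorem~\ref{thm:EPC-index}) then gives degree at least $n-\min\{w_0,n-w_0\}=n-\mu_n(S)$ for the product, and subtracting the $|W_n(\{0,1\}^n\setminus S)|-1$ added degrees yields exactly $\Lambda_n(S)$.

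There is also a secondary gap in your upper bound for (a).  The $S_n$-equivariance argument correctly shows that no coordinate functional $c_I$ vanishes identically on the solution space $V$ (else $V=0$), but for the evaluation functionals $Q\mapsto Q(x)$ at a point $x$ of weight $k\in[\mu,n-\mu]$, equivariance only shows that, for each fixed $k$, these functionals vanish identically on $V$ either all together or not at all; you still owe an argument that some element of $V$ is nonzero somewhere on each middle layer, which genericity alone does not supply.  The paper avoids this by using Lemma~\ref{lem:T}: $\mc{H}^*_{\mu}(\mb{X})$ is an explicit product of $\mu$ affine linear forms whose zero set on the cube is exactly $T_{n,\mu}$, so $\mc{H}^*_{\mu_n(S)}(\mb{X})\cdot\prod_{w\in W_n(S)\setminus W_{n,\mu_n(S)}}\bigl(\sum_iX_i-w\bigr)$ is a degree-$\Lambda_n(S)$ cover of $S$ that is moreover a hyperplane cover (cf.\ Example~\ref{ex:multi-symm}), giving the upper bounds in (a), (b) and (c) simultaneously.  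Your explicit constructions for (b) and (c) are correct and coincide, up to genericity of coefficients, with the paper's $H^*_{(1,1)}$, $H^*_{(2,1)}$, $H^*_{(2,2)}$.
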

	It is interesting, and important for further discussions, to note the constructions that imply the equalities in Theorem~\ref{thm:venkitesh}.
	\begin{example}[{\cite{venkitesh-2022-covering}}]\label{ex:venkitesh}
		\begin{enumerate}[(a)]
			\item  Let \(S\subsetneq\{0,1\}^n\) be a symmetric set.  By the proof of Theorem~\ref{thm:venkitesh}(a)~\cite[Proposition 6.1]{venkitesh-2022-covering}, for every \(a\in\{0,1\}^n\setminus S\), there exists a polynomial \(Q_a(\mb{X})\in\mb{R}[\mb{X}]\) such that \(\deg(Q_a)\le\Lambda_n(S)\), \(Q_a|_S=0\), and \(Q_a(a)=1\).  Then choose scalars \(\beta_a\in\mb{R},\,a\in\{0,1\}^n\setminus S\) such that the polynomial \(Q(\mb{X})\coloneqq\sum_{a\in\{0,1\}^n\setminus S}\beta_aQ_a(\mb{X})\) satisfies \(\deg(Q)\le\Lambda_n(S)\), \(Q|_S=0\), and \(Q(b)\ne0\) for all \(b\in\{0,1\}^n\setminus S\).  So the polynomial \(Q(\mb{X})\) witnesses the equality in Theorem~\ref{thm:venkitesh}(a).
			
			Note that the set of scalars \(B\coloneqq\{\beta_a:a\in\{0,1\}^n\setminus S\}\) can always be chosen so that \(Q(\mb{X})\) satisfies the above required conditions.  For instance, consider a subfield of \(\mb{R}\) defined by \(\widehat{\mb{Q}}\coloneqq\mb{Q}\big(\{Q_a(b):a,b\in\{0,1\}^n\setminus S\}\big)\).\footnote{For any \(B\subsetneq\mb{R}\), the notation \(\mb{Q}(B)\) denotes the smallest subfield of \(\mb{R}\) that contains \(\mb{Q}\) and \(B\).  This subfield exists and is unique, by elementary field theory.}  It then follows that \(\mb{R}\) is an infinite dimensional \(\widehat{\mb{Q}}\)-vector space.  So we can choose \(B\) to be any \(\widehat{\mb{Q}}\)-linearly independent subset of \(\mb{R}\) of size \(2^n-|S|\).
			
			\item  Let \(S\subsetneq\{0,1\}^n\) be a symmetric set such that \(W_{n,2}\not\subseteq W_n(S)\).  If \(W_{n,1}\not\subseteq W_n(S)\), then the collection of hyperplanes \(\{H'_t(\mb{X}):t\in W_n(S)\}\), defined by \(H'_t(\mb{X})\coloneqq\sum_{i=1}^nX_i-t,\,t\in W_n(S)\) witnesses equality in Theorem~\ref{thm:venkitesh}(b).  If \(W_{n,1}=\{0,n\}\subseteq W_n(S)\), note that the hyperplane \(H^*_{(1,1)}(\mb{X})\coloneqq\sum_{i=1}^{n-1}X_i-(n-1)X_n\) satisfies \(H^*_{(1,1)}(x)=0\) for \(x\in\{0,1\}^n\) if and only if \(x\in\{0^n,1^n\}\), that is, \(x\in T_{n,1}\).  Then the collection of hyperplanes \(\{H^*_{(1,1)}(\mb{X})\}\sqcup\{H'_t(\mb{X}):t\in W_n(S)\setminus\{0,n\}\}\) witnesses the equality in Theorem~\ref{thm:venkitesh}(b).
			
			\item  The collection of hyperplanes \(\{H^*_{(2,1)}(\mb{X}),H^*_{(2,2)}(\mb{X})\}\), where \(H^*_{(2,1)}(\mb{X})\coloneqq\sum_{i=1}^{n-1}X_i-(n-3)X_n+1\) and \(H^*_{(2,2)}(\mb{X})\coloneqq\sum_{i=1}^{n-2}X_i-(n-2)X_{n-1}\), witnesses the equality in Theorem~\ref{thm:venkitesh}(c).
		\end{enumerate}
	\end{example}

	Further, the following was conjectured in~\cite{venkitesh-2022-covering}, appealing to Theorem~\ref{thm:venkitesh}(b) and (c).
	\begin{conjecture}[{\cite{venkitesh-2022-covering}}]\label{conj:venkitesh-EHC}
		For any symmetric set \(S\subsetneq\{0,1\}^n\) such that \(W_{n,2}\subseteq W_n(S)\), we have
		\[
		\EHC_n^{(1,0)}(S)=|W_n(S)|-2,
		\]
		and therefore, \(\EHC_n^{(1,0)}(S)>\EPC_n^{(1,0)}(S)\) if \(W_{n,2}\subsetneq W_n(S)\).
	\end{conjecture}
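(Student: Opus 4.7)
The upper bound $\EHC_n^{(1,0)}(S) \leq |W_n(S)| - 2$ follows from a direct construction: take the two hyperplanes $H^*_{(2,1)}, H^*_{(2,2)}$ of Example~\ref{ex:venkitesh}(c), which jointly cover $T_{n,2}$ and can be verified to avoid every Boolean point of weight in $[2, n-2]$, together with the symmetric hyperplane $\sum_{i=1}^n X_i - t$ for each $t \in W_n(S) \setminus W_{n,2}$. This gives $2 + (|W_n(S)| - 4) = |W_n(S)| - 2$ hyperplanes forming a $(1,0)$-exact cover of $S$.

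For the lower bound, the polynomial method from Theorem~\ref{thm:venkitesh}(a) gives only $\EHC_n^{(1,0)}(S) \geq \EPC_n^{(1,0)}(S) = |W_n(S)| - \mu_n(S)$, which is strictly weaker than $|W_n(S)| - 2$ as soon as $\mu_n(S) \geq 3$. To close this gap I would exploit the hyperplane structure beyond the product-polynomial degree bound. First, classify each $H$ in a cover $\mathcal{H}$ of $S$ as \emph{symmetric} (proportional to $\sum_i X_i - t$ for some $t$) or \emph{non-symmetric}. A routine coefficient-comparison argument shows that a non-symmetric hyperplane cannot contain the entire weight-$w$ layer of $\{0,1\}^n$ for any $w \in [1, n-1]$, and can fully contain only the singleton layers $\{0^n\}$ and $\{1^n\}$. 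Hence each symmetric hyperplane in $\mathcal{H}$ fully covers exactly one weight layer of $W_n(S)$, while each non-symmetric hyperplane fully covers at most the two endpoint layers.

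The heart of the proof is then to show that non-symmetric hyperplanes can collectively save at most $2$ against the naive cost of $|W_n(S)|$ symmetric hyperplanes. To each non-symmetric $H = \sum_i a_i X_i - b$ in $\mathcal{H}$ and each weight $w \in W_n(S) \cap [1, n-1]$, I would associate the proper subset $I_w(H) \subsetneq [n]$ recording which weight-$w$ points lie on $H$: for instance $I_1(H) = \{i : a_i = b\}$ and $I_{n-1}(H) = \{i : a_i = \sum_j a_j - b\}$, with analogous descriptions at intermediate weights. For every layer $w \in W_n(S) \cap [1, n-1]$ not already covered by some symmetric hyperplane, the union $\bigcup_H I_w(H)$ over non-symmetric $H$ must equal $[n]$. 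The conjectured bound would then follow by converting this simultaneous proper-cover condition, together with the rigidity constraint $H \cap \{0,1\}^n \subseteq S$, into the inequality that the number of non-symmetric hyperplanes is at least the number of non-symmetric-covered layers in $[1, n-1]$ plus $2$, the extra $2$ accounting for the singleton endpoint layers; combined with the $|W_n(S)|$ layers to be covered, this yields $|\mathcal{H}| \geq |W_n(S)| - 2$, with the base case $S = T_{n,2}$ supplied by Theorem~\ref{thm:venkitesh}(c). The main obstacle I foresee is exactly this structural saving step: the concrete examples $H^*_{(1,1)}, H^*_{(2,1)}, H^*_{(2,2)}$ strongly suggest that the saving arises entirely from the two endpoint layers, but making this rigorous seems to require either a classification of the atypical hyperplanes whose Boolean intersection sits inside a prescribed symmetric set, or an inductive argument on $|W_n(S) \setminus W_{n,2}|$ peeling off one middle layer at a time and tracking the corresponding decrease in minimum cover size.
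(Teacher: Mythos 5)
The statement you are attempting to prove is false, and the paper says so explicitly: Conjecture~\ref{conj:venkitesh-EHC} is a conjecture from~\cite{venkitesh-2022-covering} that was \emph{disproved} in~\cite{ghosh-kayal-nandi-2023-covering}, and the present paper's Theorem~\ref{thm:multiplicity-symmetric} (taking \(t=1\) and writing the set as a complement) shows that for every symmetric \(S\subsetneq\{0,1\}^n\) one has \(\EHC_n^{(1,0)}(S)=\EPC_n^{(1,0)}(S)=\Lambda_n(S)=|W_n(S)|-\mu_n(S)\). Whenever \(\mu_n(S)\ge3\), i.e.\ \(W_{n,3}\subseteq W_n(S)\), this is strictly smaller than \(|W_n(S)|-2\), so the conjectured equality fails; moreover \(\EHC_n^{(1,0)}(S)=\EPC_n^{(1,0)}(S)\) always holds for symmetric sets, contradicting the second assertion as well. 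So there is no proof to compare against; the task here was to recognize a refuted conjecture.

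The precise point where your argument breaks is the ``structural saving'' step: the claim that non-symmetric hyperplanes can collectively save at most \(2\) over the naive cost of \(|W_n(S)|\) symmetric hyperplanes. Lemma~\ref{lem:T} refutes this directly: for any \(i\in[0,\lceil n/2\rceil]\), the \(i\) non-symmetric hyperplanes \(H^*_{(i,j)}\), \(j\in[i]\), cover all of \(T_{n,i}\) while avoiding every point of \(\{0,1\}^n\setminus T_{n,i}\), so they handle the \(2i\) layers of \(W_{n,i}\) with only \(i\) hyperplanes --- a saving of \(i\), not \(2\). Taking \(S=T_{n,i}\) with \(i\ge3\) (and \(n\ge 2i\)) already gives \(\EHC_n^{(1,0)}(S)\le i<2i-2=|W_n(S)|-2\). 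Your observation that a single non-symmetric hyperplane cannot contain an entire middle layer is correct, but it does not imply the bound you want: the sets \(I_w(H)\) only need to cover \([n]\) \emph{jointly} across the non-symmetric hyperplanes in the family, and the \(H^*_{(i,j)}\) achieve exactly this for all \(2i\) peripheral layers simultaneously with far fewer hyperplanes than layers. Your upper-bound construction of size \(|W_n(S)|-2\) is a valid cover, but it is not optimal once \(\mu_n(S)\ge 3\); the optimal construction replaces the pair \(\{H^*_{(2,1)},H^*_{(2,2)}\}\) by the full family \(\mc{H}^*_{\mu_n(S)}\), as in Example~\ref{ex:multi-symm}.
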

	
	\item  Aaronson, Groenland, Grzesik, Kielak, and Johnston~\cite{aaronson2020exact} considered problem of determining \(\EHC_n^{(1,0)}(\{0,1\}^n\setminus S)\) for general nonempty subsets \(S\subseteq\{0,1\}^n\), and obtained the following.
	\begin{theorem}[{\cite{aaronson2020exact}}]\label{thm:aaronson}
		For any nonempty subset \(S\subseteq\{0,1\}^n\), we have
		\[
		\EHC_n^{(1,0)}(\{0,1\}^n\setminus S)\ge n-\lfloor\log_2|S|\rfloor.
		\]
	\end{theorem}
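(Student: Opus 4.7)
The plan is to reduce the hyperplane covering problem to a statement about supports of low-degree multilinear polynomials on the hypercube, which is exactly the setting where the polynomial method shines.

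First, I would set up the polynomial associated to a minimum cover. Suppose $H_1(\mathbf{X}), \ldots, H_m(\mathbf{X})$ is a $(1,0)$-exact hyperplane cover for $\{0,1\}^n \setminus S$ of size $m = \EHC_n^{(1,0)}(\{0,1\}^n \setminus S)$, where each $H_i$ is an affine linear polynomial. Form the product $P(\mathbf{X}) \coloneqq \prod_{i=1}^m H_i(\mathbf{X})$, which has degree exactly $m$. By the defining property of a $(1,0)$-exact cover, for every $x \in \{0,1\}^n$ we have $P(x) = 0$ if and only if $x \in \{0,1\}^n \setminus S$; equivalently, $\{x \in \{0,1\}^n : P(x) \neq 0\} = S$.

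Next, I would reduce to multilinear form. Using the identities $X_i^2 = X_i$ on $\{0,1\}^n$, one can replace $P$ by a multilinear polynomial $\widetilde{P}(\mathbf{X}) \in \mathbb{R}[\mathbf{X}]$ with $\deg(\widetilde{P}) \le \deg(P) = m$ such that $\widetilde{P}$ and $P$ agree as functions on $\{0,1\}^n$. Since $S \neq \emptyset$, the polynomial $\widetilde{P}$ is nonzero, and its support (as a function on $\{0,1\}^n$) is exactly $S$.

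The main step, which I would state and prove as a lemma, is the following standard bound: \emph{any nonzero multilinear polynomial $f \in \mathbb{R}[\mathbf{X}]$ of degree at most $d$ has support $|\{x \in \{0,1\}^n : f(x) \neq 0\}| \ge 2^{n-d}$}. I would prove this by induction on $n$ (or equivalently on $d$): write $f(\mathbf{X}) = f_0(X_2, \ldots, X_n) + X_1 f_1(X_2, \ldots, X_n)$ with $\deg f_0 \le d$ and $\deg f_1 \le d-1$, and then compare $f|_{X_1 = 0} = f_0$ and $f|_{X_1 = 1} = f_0 + f_1$, splitting into the cases $f_1 = 0$ and $f_1 \neq 0$ and appealing to the inductive hypothesis in $n-1$ variables. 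The degree bookkeeping is the only subtle point: when $f_1 \neq 0$ one uses that at least one of $f_0$ or $f_0 + f_1$ has degree at least $\deg(f) - 1$, and in any case neither is zero-free below the required threshold.

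Finally, applying this lemma to $\widetilde{P}$ gives $|S| \ge 2^{n-m}$, hence $m \ge n - \log_2 |S|$. Since $m$ is an integer, this sharpens to $m \ge \lceil n - \log_2|S| \rceil = n - \lfloor \log_2 |S| \rfloor$, as desired. I do not anticipate any real obstacle; the only care needed is the degree reduction modulo $X_i^2 - X_i$ (which can only decrease degree) and the inductive proof of the support lemma, both of which are routine.
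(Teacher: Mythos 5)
Your argument is correct. Note that the paper itself does not prove Theorem~\ref{thm:aaronson}: it cites it from~\cite{aaronson2020exact} and, within this paper, the bound is recovered by a different route, namely Proposition~\ref{pro:index-complexity} (\(r_n(S)\le\lfloor\log_2|S|\rfloor\)) combined with Theorem~\ref{thm:EPC-index} at \(t=1\), which rests on the Sauermann--Wigderson multiplicity Nullstellensatz machinery. Your route instead goes through the product polynomial, multilinear reduction modulo \(X_i^2-X_i\), and the standard support bound that a nonzero multilinear polynomial of degree at most \(d\) has at least \(2^{n-d}\) non-zeros on \(\{0,1\}^n\) (a special case of the Alon--F\"uredi non-vanishing theorem, or the DeMillo--Lipton--Schwartz--Zippel bound for the cube). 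This is more elementary and self-contained, and like the paper's route it actually proves the stronger inequality \(\EPC_n^{(1,0)}(\{0,1\}^n\setminus S)\ge n-\lfloor\log_2|S|\rfloor\), since only the degree of the cover polynomial is used; the index-complexity route buys the generalization to higher multiplicities \((t,t-1)\) that the paper needs later. One small imprecision: in the inductive step of your support lemma, the clean case split when \(f_1\ne0\) is on which of \(f_0\) and \(f_0+f_1\) vanishes identically --- if one of them is the zero polynomial the other equals \(\pm f_1\) and has degree at most \(d-1\), giving \(2^{(n-1)-(d-1)}\) non-zeros on one side; if both are nonzero, each contributes at least \(2^{(n-1)-d}\) and the two sides sum to \(2^{n-d}\). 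The phrase about ``degree at least \(\deg(f)-1\)'' is not quite the right invariant, but the lemma and the final integrality step \(m\ge\lceil n-\log_2|S|\rceil=n-\lfloor\log_2|S|\rfloor\) are both sound.
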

	Improving upon Theorem~\ref{thm:aaronson},~\cite{ghosh-kayal-nandi-2023-covering} bounded \(\EPC_n^{(t,t-1)}(\{0,1\}^n\setminus S)\) for all \(t\ge1\), in a more abstract sense by introducing a combinatorial measure called \emph{index complexity}.  For any subset \(S\subseteq\{0,1\}^n,\,|S|>1\), the \tsf{index complexity} of \(S\) is defined to be the smallest positive integer \(r_n(S)\) such that for some \(I\subseteq[n],\,|I|=r_n(S)\), there is a point \(u\in S\) such that for each \(v\in S,\,v\ne u\), we get \(v_i\ne u_i\) for some \(i\in I\), that is, the point \(u\) can be \emph{separated from all other points} in \(S\) in the coordinates in \(I\). (The index complexity of a singleton set is defined to be zero.)
	
	The improvement to Theorem~\ref{thm:aaronson} was achieved via the following two results.
	\begin{proposition}[{\cite{ghosh-kayal-nandi-2023-covering}}]\label{pro:index-complexity}
		For any nonempty subset \(S\subseteq\{0,1\}^n\), we have
		\[
		r_n(S)\le\lfloor\log_2|S|\rfloor.
		\]
	\end{proposition}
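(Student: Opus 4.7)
The plan is to prove this by induction on $|S|$ via a binary-splitting argument, essentially building a short decision tree that isolates one element of $S$. The base case $|S|=1$ is immediate since the index complexity is $0$ by definition, matching $\lfloor\log_2 1\rfloor=0$.

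For the inductive step with $|S|\ge 2$, I would first pick any coordinate $i\in[n]$ on which $S$ is not constant (such an $i$ exists since $|S|\ge 2$), and partition $S=S_0\sqcup S_1$ according to the value at coordinate $i$. Let $b\in\{0,1\}$ be chosen so that $|S_b|\le\lfloor|S|/2\rfloor$. Applying the inductive hypothesis to $S_b$ yields a subset $I'\subseteq[n]$ of size at most $\lfloor\log_2|S_b|\rfloor$ and a point $u\in S_b$ such that $u$ is distinguished from every other $v\in S_b$ by some coordinate in $I'$ (taking $I'=\emptyset$ when $|S_b|=1$, vacuously).

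I would then take $I=I'\cup\{i\}$ and keep the same $u$. Separation of $u$ from any $v\in S\setminus\{u\}$ splits into two cases: if $v\in S_{1-b}$, then $v_i\ne u_i$ and coordinate $i\in I$ does the job; if $v\in S_b\setminus\{u\}$, the inductive choice of $I'\subseteq I$ handles it. This gives $r_n(S)\le|I'|+1\le\lfloor\log_2|S_b|\rfloor+1$, and the proof closes after the elementary verification that $\lfloor\log_2\lfloor m/2\rfloor\rfloor+1\le\lfloor\log_2 m\rfloor$ for all integers $m\ge2$ (in fact with equality).

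The argument is entirely straightforward and I do not anticipate a real obstacle; the only mild subtlety is handling the boundary case $|S_b|=1$ correctly, and noting that $\lfloor\log_2|S|\rfloor\ge 1$ whenever $|S|\ge 2$, which ensures that an $I$ of size at least $1$ is permitted by the ``positive integer'' clause in the definition of $r_n(S)$.
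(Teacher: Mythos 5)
Your proof is correct: the induction on $|S|$, splitting on a non-constant coordinate and recursing into the smaller half, together with the identity $\lfloor\log_2\lfloor m/2\rfloor\rfloor+1=\lfloor\log_2 m\rfloor$ for $m\ge 2$, gives exactly the claimed bound, and the base case $|S|=1$ is handled consistently with the convention $r_n(\{u\})=0$. The paper itself only cites this proposition from \cite{ghosh-kayal-nandi-2023-covering} without reproducing a proof, and your halving argument is essentially the standard one used there, so there is nothing to object to.
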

	
	\begin{theorem}[{\cite{ghosh-kayal-nandi-2023-covering}}]\label{thm:EPC-index}
		For any nonempty subset \(S\subseteq\{0,1\}^n\) and \(t\ge1\), we have
		\[
		\EPC_n^{(t,t-1)}(\{0,1\}^n\setminus S)\ge n-r_n(S)+2t-2.
		\]
	\end{theorem}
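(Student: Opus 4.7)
My approach is to reduce to Theorem~\ref{thm:sauermann-wigderson} by restricting the cover polynomial to a subcube that isolates a single point of $S$.  Using the definition of index complexity, I choose $u \in S$ and $I \subseteq [n]$ with $|I| = r_n(S)$ such that every $v \in S \setminus \{u\}$ has $v_i \neq u_i$ for some $i \in I$; equivalently, the affine subcube $C_u = \{x \in \{0,1\}^n : x_I = u_I\}$ meets $S$ only at $u$.

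Given any $(t,t-1)$-exact polynomial cover $P(\mb{X})$ for $\{0,1\}^n \setminus S$, I form the restriction
\[
\tilde P(\mb{X}_{[n] \setminus I}) \coloneqq P(u_I, \mb{X}_{[n] \setminus I}),
\]
a polynomial in $n - r_n(S)$ variables of degree at most $\deg P$.  A term-by-term comparison of Taylor expansions shows that fixing variables never decreases vanishing multiplicities; hence, writing $u' \coloneqq u_{[n] \setminus I}$, the polynomial $\tilde P$ vanishes with multiplicity at least $t$ at every point of $\{0,1\}^{n - r_n(S)} \setminus \{u'\}$ (since $C_u \setminus \{u\} \subseteq \{0,1\}^n \setminus S$) and with multiplicity at least $t - 1$ at $u'$ (inherited from the multiplicity of $P$ at $u$).

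Granting that $\tilde P$ is nonzero with multiplicity exactly $t - 1$ at $u'$, applying the coordinate flips $X_j \mapsto 1 - X_j$ for each $j \in [n] \setminus I$ with $u'_j = 1$ transforms $\tilde P$ into a $(t, t-1)$-exact polynomial cover for $\{0,1\}^{n - r_n(S)} \setminus \{0^{n - r_n(S)}\}$; Theorem~\ref{thm:sauermann-wigderson} then gives $\deg \tilde P \geq (n - r_n(S)) + 2t - 2$, and combining with $\deg P \geq \deg \tilde P$ finishes the argument.

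The hard part is securing the granted statement.  If every nonzero degree-$(t-1)$ Taylor coefficient of $P$ at $u$ is supported on a monomial involving some variable in $I$, then all such terms vanish under the substitution $\mb{X}_I = u_I$, and the multiplicity of $\tilde P$ at $u'$ may jump above $t - 1$ or $\tilde P$ may vanish identically.  To handle this, I would work with a $P$ of minimum degree and argue via an exchange or perturbation step that among all minimum-degree covers there is one whose Taylor expansion at $u$ retains a nonzero degree-$(t-1)$ term whose support is disjoint from $I$.  Such an adjustment should be achievable by subtracting from $P$ a suitable correction polynomial lying in the vanishing ideal of the cover data at the points of $\{0,1\}^n \setminus \{u\}$, so that the modified cover continues to satisfy all the multiplicity constraints without any increase in degree.
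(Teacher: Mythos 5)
Your reduction has a genuine gap exactly where you flag it, and the proposed repair is not a proof. The restriction $\tilde P(\mb{X}_{[n]\setminus I})=P(u_I,\mb{X}_{[n]\setminus I})$ only inherits multiplicity \emph{at least} $t-1$ at $u'$: the hypothesis on $P$ guarantees some $\alpha$ with $|\alpha|=t-1$ and $\partial^\alpha P(u)\ne0$, but if every such $\alpha$ has support meeting $I$, then $\tilde P$ may vanish to order $\ge t$ at $u'$ or vanish identically, and then Theorem~\ref{thm:sauermann-wigderson} gives nothing (a nonzero polynomial such as $(X_1(X_1-1))^{t}$ vanishes to order $t$ at \emph{every} point of the subcube with degree only $2t$, so ``multiplicity $\ge t$ everywhere'' carries no useful degree lower bound). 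Your fix --- perturbing a minimum-degree cover so that some order-$(t-1)$ derivative at $u$ supported off $I$ is nonzero --- is only asserted: the correction polynomial you would subtract must itself vanish to order $\ge t$ on $\{0,1\}^n\setminus S$, to order $\ge t-1$ on $S$ (preserving \emph{exact} multiplicity $t-1$ at every point of $S$, not just at $u$), have degree at most $\deg P$, and contribute a prescribed nonzero $(t-1)$-jet at $u$; establishing that such a polynomial exists is essentially as hard as the theorem itself, and no exchange argument is given.

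The standard way to make your idea work is to go \emph{up} in degree rather than down in dimension: set $Q(\mb{X})=P(\mb{X})\cdot\prod_{i\in I}\bigl(X_i-(1-u_i)\bigr)$. Each linear factor is $\pm1$ at points agreeing with $u$ on coordinate $i$ and vanishes otherwise, so by multiplicativity of vanishing order, $\mult(Q,v)\ge(t-1)+1=t$ for every $v\in S\setminus\{u\}$ (some $i\in I$ has $v_i=1-u_i$), $\mult(Q,b)\ge t$ for $b\notin S$, and $\mult(Q,u)=\mult(P,u)=t-1$ exactly. Thus $Q$ is a $(t,t-1)$-exact polynomial cover for $\{0,1\}^n\setminus\{u\}$, Theorem~\ref{thm:sauermann-wigderson-again} gives $\deg Q\ge n+2t-2$, and $\deg P=\deg Q-r_n(S)$ finishes the proof. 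This is the device the present paper itself uses (e.g.\ the polynomial $Q$ in the proof of Theorem~\ref{thm:EPC-PDC} and the second proof of Proposition~\ref{pro:inner-outer}), and it sidesteps the jet-support obstruction entirely because multiplication, unlike restriction, adds multiplicities exactly.
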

	
	\item  Returning to the context of symmetric sets, note that for any symmetric set \(S\subseteq\{0,1\}^n\), the complement \(\{0,1\}^n\setminus S\) is also symmetric.  Further, we say a symmetric set \(S\) is a \tsf{layer} if \(|W_n(S)|=1\).  The first three authors~\cite{ghosh-kayal-nandi-2023-covering} answered Question~\ref{ques:main} in the affirmative for the complement of any layer \(S\), and for all \(t\ge1,\,\ell=t-1\).  In particular, this improves Theorem~\ref{thm:sauermann-wigderson}.
	\begin{theorem}[{\cite{ghosh-kayal-nandi-2023-covering}}]\label{thm:ghosh-kayal-nandi}
		For any layer \(S\subsetneq\{0,1\}^n\) with \(W_n(S)=\{w\}\), and any \(t\ge1\), we have
		\[
		\EHC_n^{(t,t-1)}(\{0,1\}^n\setminus S)=\EPC_n^{(t,t-1)}(\{0,1\}^n\setminus S)=\max\{w,n-w\}+2t-2.
		\]
	\end{theorem}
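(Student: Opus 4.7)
The plan is to prove both equalities by matching lower and upper bounds on $\EHC_n^{(t,t-1)}(\{0,1\}^n \setminus S)$, using $\EHC \geq \EPC$ in general (Appendix~\ref{app:EHC-EPC}).

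For the lower bound, I will invoke Theorem~\ref{thm:EPC-index} after computing the index complexity of a layer. The claim is $r_n(S) = \min\{w, n-w\}$. Assuming without loss of generality $w \leq n - w$, the upper bound $r_n(S) \leq w$ is witnessed by $u$ being the indicator vector of $[w]$ and $I = [w]$: any $v \in S \setminus \{u\}$ has $|v| = w$ and $v_{[w]} \neq 1^w = u_{[w]}$ (else $v = u$ by weight), so $u$ and $v$ disagree at some coordinate in $I$. For the matching lower bound, if $|I| < w$, then for any $u \in S$ both $\{i : u_i = 1\} \setminus I$ and $\{i : u_i = 0\} \setminus I$ are nonempty (since $w > |I|$ and $n - w \geq w > |I|$), so flipping one index in each produces $v \in S \setminus \{u\}$ agreeing with $u$ on $I$. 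Theorem~\ref{thm:EPC-index} then yields $\EPC \geq n - w + 2t - 2 = \max\{w, n-w\} + 2t - 2$.

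For the upper bound I induct on $t$. Given a $(t, t-1)$-exact hyperplane cover $\mathcal{H}_t$ of the claimed size, I append the two hyperplanes $\{X_1 = 0\}$ and $\{X_1 = 1\}$. Every cube point lies on exactly one of these, so the multiplicity at every cube point increases by exactly $1$: layer-$w$ points move from $t - 1$ to $t$, and every other cube point from $\geq t$ to $\geq t + 1$. This Sauermann--Wigderson-style boost (compare Theorem~\ref{thm:sauermann-wigderson}) reduces everything to the base case $t = 1$.

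The main obstacle is the base case, where I need $\max\{w, n-w\}$ affine hyperplanes, each missing every layer-$w$ point of $\{0,1\}^n$, whose union covers every non-layer-$w$ point. Taking WLOG $w \leq n/2$, the cases $w \in \{0, 1\}$ fall out of Theorem~\ref{thm:venkitesh}(b) using either the $n$ hyperplanes $\{X_i - 1 : i \in [n]\}$ (for $w = 0$) or the combination of $H^*_{(1,1)}$ with $\{L - k : k \in [2, n-1]\}$ (for $w = 1$), where $L = X_1 + \cdots + X_n$. For $2 \leq w \leq n/2$ the condition $W_{n,2} \subseteq W_n(\{0,1\}^n \setminus S)$ places us outside the scope of Theorem~\ref{thm:venkitesh}, so I need to build genuinely new hyperplanes. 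My plan here is to partition $[n]$ into blocks and use $n - w$ asymmetric hyperplanes, each a block-wise constant linear combination with integer coefficients tuned so that its cube zero locus avoids the layer $|x| = w$ while collectively covering every non-layer-$w$ cube point. The main technical verification is the combinatorial check that $n - w$ such hyperplanes suffice; this is the technical core of the proof.
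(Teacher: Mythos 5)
Your lower bound and your multiplicity-boosting induction follow exactly the paper's route: you correctly compute $r_n(S)=\min\{w,n-w\}$ for a layer (this is Proposition~\ref{pro:layer-index-complexity}, cf.\ Remark~\ref{rem:EPC-index}), invoke Theorem~\ref{thm:EPC-index} to get $\EPC_n^{(t,t-1)}(\{0,1\}^n\setminus S)\ge n-\min\{w,n-w\}+2t-2=\max\{w,n-w\}+2t-2$, and pass from $t$ to $t+1$ by appending $X_1$ and $X_1-1$ (the family $\mc{H}^{\circ(t-1)}$ of Example~\ref{ex:GKN-EHC}). All of that is sound, as are your $w\in\{0,1\}$ base cases.

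The gap is the base case $t=1$ with $2\le w\le n/2$, which you explicitly leave as a ``plan.'' This is the technical core of the theorem and cannot be deferred: you must exhibit $n-w$ hyperplanes, each of whose zero locus on the cube is disjoint from the layer $|x|=w$, whose union contains every point of $\{0,1\}^n\setminus S$. The paper's solution is Lemma~\ref{lem:T} (proved by a careful case analysis in Appendix~\ref{app:GKN-hyperplane}): the $w$ hyperplanes
\[
H^*_{(w,j)}(\mb{X})=\sum_{k=1}^{n-j}X_k-(n-2w+j)X_{n-j+1}-(w-j),\qquad j\in[w],
\]
whose union meets a cube point $x$ precisely when $|x|\in[0,w-1]\cup[n-w+1,n]$ and which individually avoid all points of weight in $[w,n-w]$, combined with the $n-2w$ level hyperplanes $\sum_iX_i=k$, $k\in[w+1,n-w]$, for a total of $w+(n-2w)=n-w=\max\{w,n-w\}$. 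The verification that each $H^*_{(w,j)}$ misses the entire middle band of weights while the family jointly covers both extremes requires tracking the position of the $(n-w+1)$-th zero (resp.\ one) of $x$, and is not a routine check. Your sketch of ``$n-w$ asymmetric block-wise constant hyperplanes'' does not obviously yield such a family --- note the working construction mixes $w$ asymmetric hyperplanes with $n-2w$ symmetric ones --- so as written the upper bound, and hence the theorem, remains unproved for $2\le w\le n/2$.
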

	The following construction by~\cite{ghosh-kayal-nandi-2023-covering} is important throughout our discussion.  For completeness, we give a proof in Appendix~\ref{app:GKN-hyperplane}.
	\begin{lemma}[{\cite{ghosh-kayal-nandi-2023-covering}}]\label{lem:T}
		For \(i\in[0,\lceil n/2\rceil]\), the collection of hyperplanes \(\{H^*_{(i,j)}(\mb{X}):j\in[i]\}\) defined by
		\[
		H^*_{(i,j)}(\mb{X})=\sum_{k=1}^{n-j}X_k-(n-2i+j)X_{n-j+1}-(i-j),\quad j\in[i],
		\]
		satisfies the following.
		\begin{itemize}
			\item[\(\bullet\)]  For every \(a\in T_{n,i}\), there exists \(j\in[i]\) such that \(H^*_{(i,j)}(a)=0\).
			\item[\(\bullet\)]  \(H^*_{(i,j)}(b)\ne0\) for every \(b\in\{0,1\}^n\setminus T_{n,i},\,j\in[i]\).
		\end{itemize}
	\end{lemma}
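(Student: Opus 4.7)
The plan is to analyze the zero set of each hyperplane \(H^*_{(i,j)}\) explicitly on the cube \(\{0,1\}^n\), and then identify this union with \(T_{n,i}\) by a monotone-sequence argument on bitstrings.

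First I would substitute an arbitrary \(x\in\{0,1\}^n\) into \(H^*_{(i,j)}\) and, writing \(s_j(x)\coloneqq\sum_{k=1}^{n-j}x_k\), split according to the value of the distinguished coordinate \(x_{n-j+1}\). The equation \(H^*_{(i,j)}(x)=0\) then reduces to the alternative: either \(x_{n-j+1}=0\) with \(s_j(x)=i-j\), or \(x_{n-j+1}=1\) with \(s_j(x)=n-i\). Since the \(j-1\) omitted coordinates \(x_{n-j+2},\ldots,x_n\) contribute an integer in \([0,j-1]\) to \(|x|\), the first case forces \(|x|\in[i-j,i-1]\subseteq[0,i-1]\) and the second forces \(|x|\in[n-i+1,n-i+j]\subseteq[n-i+1,n]\). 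The second bullet is then immediate: any \(b\) with \(|b|\in[i,n-i]\) lies in neither range, for any \(j\in[i]\), so \(H^*_{(i,j)}(b)\ne0\).

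For the first bullet, fix \(a\in T_{n,i}\) with \(w\coloneqq|a|\). In the case \(w\le i-1\), set \(t_j\coloneqq\sum_{k=n-j+1}^{n}a_k\) and consider the integer sequence \(\tau_j\coloneqq t_j-j\). Then \(\tau_0=0\), \(\tau_i=t_i-i\le w-i\le-1\), and \(\tau_j-\tau_{j-1}=a_{n-j+1}-1\in\{-1,0\}\), where the value \(-1\) occurs exactly when \(a_{n-j+1}=0\). Being a non-increasing integer sequence whose decrements are at most one, \(\tau_j\) attains every integer between \(\tau_0\) and \(\tau_i\); letting \(j^*\in[i]\) be minimal with \(\tau_{j^*}=-(i-w)\), the step at \(j^*\) must be \(-1\) by minimality of \(j^*\), forcing \(a_{n-j^*+1}=0\), and rearranging gives \(s_{j^*}(a)=w-t_{j^*}=i-j^*\). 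Hence \(H^*_{(i,j^*)}(a)=0\). The symmetric case \(w\ge n-i+1\) is handled identically after setting \(e\coloneqq n-w\) and running the same argument on \(\sigma_j\coloneqq(j-t_j)-(j-i+e)\); this sequence starts at \(i-e\ge1\), ends at \(\le0\), and has steps in \(\{-1,0\}\) with the value \(-1\) corresponding to \(a_{n-j+1}=1\), so the least \(j^*\) with \(\sigma_{j^*}=0\) satisfies \(a_{n-j^*+1}=1\) and \(s_{j^*}(a)=n-i\).

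No serious obstacle is expected. The main conceptual step is recognizing that the pointwise covering condition can be recast as an intermediate-value property for integer sequences with bounded step size, after which both claims follow from direct bookkeeping; the non-covering half is an immediate Hamming-weight range check from the initial case analysis.
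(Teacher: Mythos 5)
Your proof is correct, and it is essentially the paper's own argument: both split on the value of the distinguished coordinate $x_{n-j+1}$ to reduce $H^*_{(i,j)}(x)=0$ to a weight condition, and your minimal $j^*$ (the first index at which the trailing-zero count reaches $i-w$, resp.\ the trailing-one count reaches $i-(n-w)$) picks out exactly the same position $n-j+1$ that the paper selects directly as the $(n-i+1)$-th element of $I_0(a)$ (resp.\ $I_1(a)$). The discrete intermediate-value packaging is a cosmetic difference only; no gap.
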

	A construction that implies the equality in Theorem~\ref{thm:ghosh-kayal-nandi} is then immediate.
	\begin{example}[{\cite{ghosh-kayal-nandi-2023-covering}}]\label{ex:GKN-EHC}
		Let \(S\subsetneq\{0,1\}^n\) be a layer with \(W_n(S)=w\), and \(t\ge1\).  Let \(w'=\min\{w,n-w\}\).  Denote \(H_0^\circ(\mb{X})=X_1,\,H_1^\circ(\mb{X})=X_1-1\).  Then the collection of hyperplanes
		\begin{align}
		\{H^*_{(w',j)}(\mb{X}):j\in[w']\}\n\sqcup\bigsqcup_{\ell\in[t-1]}\{H_0^\circ(\mb{X}),H_1^\circ(\mb{X})\}\tag*{(disjoint union, as a multiset)}
		\end{align}
		witnesses the equality in Theorem~\ref{thm:ghosh-kayal-nandi}.
	\end{example}
	\begin{remark}\label{rem:EPC-index}
		Appealing to Theorem~\ref{thm:venkitesh}, Theorem~\ref{thm:ghosh-kayal-nandi}, and the definition of index complexity, it will be interesting ahead to note that for a layer \(S\subsetneq\{0,1\}^n\) with \(W_n(S)=w\), we have\footnote{We will understand the index complexity of symmetric sets in more detail in Section~\ref{subsec:main-index-complexity-symmetric}.}
		\[
		\ol{\Lambda}_n(S)=\max\{w,n-w\}=n-r_n(S),
		\]
		and Theorem~\ref{thm:ghosh-kayal-nandi}, in fact, shows that for the layer \(S\) and for all \(t\ge1\), we have
		\begin{align*}
			\EHC_n^{(t,t-1)}(\{0,1\}^n\setminus S)&=\EHC_n^{(1,0)}(\{0,1\}^n\setminus S)+2t-2\\
			&=\ol{\Lambda}_n(S)+2t-2\\
			&=n-r_n(S)+2t-2\\
			&=\EPC_n^{(1,0)}(\{0,1\}^n\setminus S)+2t-2=\EPC_n^{(t,t-1)}(\{0,1\}^n\setminus S).
		\end{align*}
	\end{remark}
	Further,~\cite{ghosh-kayal-nandi-2023-covering} disprove Conjecture~\ref{conj:venkitesh-EHC}, pertaining to the remaining case `\(W_{n,2}\subsetneq W_n(S)\)', by providing a counterexample.
\end{itemize}
In the present work, we will build upon some of the above results.

\subsection{Our results: higher multiplicity hyperplane covers}\label{subsec:results-hyperplane}

As mentioned earlier, we are broadly interested in understanding when Question~\ref{ques:main} has an answer in the affirmative.  In the present work, we will obtain some such characterizations when \(t\ge1,\,\ell=t-1\), for some structured subsets of the hypercube; specifically, we will consider symmetric sets, as well as a \emph{block generalization} of symmetric sets.  Strictly speaking, we will also have some nondegeneracy conditions in some characterizations.

\subsubsection*{Proof technique}\label{subsubsec:technique}
\addcontentsline{toc}{subsubsection}{Proof technique}

We also have a common proof technique for our results, which is simple and similar to the approach adopted in the earlier works~\cite{alon-furedi,sauermann-wigderson-2022-covering,venkitesh-2022-covering,ghosh-kayal-nandi-2023-covering}.  To summarize the technique, consider a subset \(S\subsetneq\{0,1\}^n\) (with a suitable structure, as we detail later), and suppose we would like to determine \(\EHC_n^{(t,t-1)}(S)\).  Via the polynomial method, we first obtain a lower bound for the \emph{weaker} polynomial covering problem, say \(\EPC_n^{(t,t-1)}(S)\ge L_t\) (for some \(L_t\ge1\)).  We then construct a hyperplane cover to obtain an upper bound \(\EHC_n^{(t,t-1)}(S)\le L_t\) for the \emph{stronger} hyperplane covering problem.  Thus, we immediately have the inequalities
\[
L_t\ge\EHC_n^{(t,t-1)}(S)\ge\EPC_n^{(t,t-1)}(S)\ge L_t,
\]
which gives a tight characterization.

\subsubsection*{Some fundamental hyperplane families}\label{subsubsec:hyperplane-construction}
\addcontentsline{toc}{subsubsection}{Some fundamental hyperplane families}

Before we detail our results, let us fix the notations for some fundamental hyperplane families which will appear repeatedly in this work.
\begin{enumerate}[(a)]
	\item  For each \(t\in[0,n]\), define \(H'_t(\mb{X})=\sum_{i=1}^nX_i-t\).  Further, for any \(W\subseteq[0,n]\), let \(\mc{H}'_W(\mb{X})=\{H'_t(\mb{X}):t\in W\}\).
	
	\item  For each \(i\in[0,\lceil n/2\rceil],\,j\in[i]\), as defined in Lemma~\ref{lem:T}, we have
	\[
	H^*_{(i,j)}(\mb{X})=\sum_{k=1}^{n-j}X_k-(n-2i+j)X_{n-j+1}-(i-j).
	\]
	Further, let \(\mc{H}^*_i(\mb{X})=\{H^*_{(i,j)}(\mb{X}):j\in[i]\}\).
	
	\item  Define \(H_0^\circ(\mb{X})=X_1\) and \(H_1^\circ(\mb{X})=X_1-1\).  Further, let \(\mc{H}^{\circ m}(\mb{X})=\bigsqcup_{\,\ell=1}^{\,m}\{H^\circ_0(\mb{X}),H_1^\circ(\mb{X})\}\) (disjoint union, as a multiset), for any \(m\ge1\).
\end{enumerate}

\subsubsection{Warm-up: Index complexity of symmetric sets}\label{subsubsec:warmup}

We have seen that~\cite{ghosh-kayal-nandi-2023-covering} obtain a lower bound on the polynomial covering problem for any general subset of the hypercube, in terms of index complexity (Theorem~\ref{thm:EPC-index}), by employing the polynomial method.  Further, they show that in the case of a single layer of the hypercube, this lower bound is tight (Theorem~\ref{thm:ghosh-kayal-nandi}).  As a consequence, it is also seen that the index complexity of a single layer can be expressed in terms of the combinatorial measure \(\Lambda_n\) introduced in~\cite{venkitesh-2022-covering} (also see Theorem~\ref{thm:venkitesh} and Remark~\ref{rem:EPC-index}).  To summarise, we have the following.
\begin{proposition}[Implicit in~\cite{ghosh-kayal-nandi-2023-covering}]\label{pro:layer-index-complexity}
	For a layer \(S\subsetneq\{0,1\}^n\) with \(W_n(S)=\{w\}\), we have
	\[
	\ol{\Lambda}_n(S)=n-r_n(S)=\max\{w,n-w\}.
	\]
\end{proposition}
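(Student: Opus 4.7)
The proposition is a direct unpacking of the two combinatorial measures $\ol{\Lambda}_n$ and $r_n$ when applied to a single layer, so my plan is to establish each of the two equalities separately by a straightforward verification, and I do not expect any real obstacle beyond careful bookkeeping.

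\textbf{Step 1: Compute $\ol{\Lambda}_n(S)$.}  Since $S$ is the layer $\{x\in\{0,1\}^n : |x|=w\}$, its complement is a symmetric set with
\[
W_n(\{0,1\}^n\setminus S)=[0,n]\setminus\{w\},
\]
so $|W_n(\{0,1\}^n\setminus S)|=n$.  Now
\[
\mu_n(\{0,1\}^n\setminus S)=\max\{i\in[0,\lceil n/2\rceil]:W_{n,i}\subseteq[0,n]\setminus\{w\}\}=\max\{i\in[0,\lceil n/2\rceil]:w\notin[0,i-1]\cup[n-i+1,n]\},
\]
and the condition on $i$ is equivalent to $i\le w$ and $i\le n-w$.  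Since $\min\{w,n-w\}\le\lceil n/2\rceil$ automatically, this maximum equals $\min\{w,n-w\}$, giving
\[
\ol{\Lambda}_n(S)=n-\min\{w,n-w\}=\max\{w,n-w\}.
\]

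\textbf{Step 2: Compute $r_n(S)$.}  The plan is to exhibit an index set achieving $|I|=\min\{w,n-w\}$ that separates some $u\in S$ from every other $v\in S$, and then show no smaller choice works.  For the upper bound, fix any $u\in S$ and take $I$ to be either the set of coordinates on which $u$ is $1$ (if $w\le n-w$) or the set of coordinates on which $u$ is $0$ (if $w>n-w$).  In either case, any $v\in S$ with $v|_I=u|_I$ must have $|v|=w$ and must agree with $u$ on $I$; counting ones forces $v=u$.  For the lower bound, suppose $|I|<\min\{w,n-w\}$ and let $u\in S$ be arbitrary with $a$ coordinates equal to $1$ inside $I$.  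The number of $v\in S$ with $v|_I=u|_I$ equals $\binom{n-|I|}{w-a}$; since $0<w-a<n-|I|$ (using $a\le|I|<w$ and $w-a\le w<n-|I|+\min\{w,n-w\}-|I|\le n-|I|$, so strictly), this count is at least $2$, so $u$ cannot be separated from all other points of $S$ by $I$.  Hence $r_n(S)=\min\{w,n-w\}$ (with the singleton cases $w\in\{0,n\}$ handled by the convention $r_n(S)=0$, which matches $\min\{w,n-w\}=0$).

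\textbf{Step 3: Conclude.}  Combining Steps~1 and~2,
\[
\ol{\Lambda}_n(S)=\max\{w,n-w\}=n-\min\{w,n-w\}=n-r_n(S),
\]
which is the chain of equalities in the proposition.  The only subtlety lies in verifying the strict inequalities in the lower-bound counting argument of Step~2; all other steps are routine unwrapping of definitions.
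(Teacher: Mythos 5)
Your proof is correct, and it takes a genuinely different route from the paper. The paper never verifies this proposition directly: it is labelled ``implicit in'' the earlier work, and is later recovered as the special case of Corollary~\ref{cor:lambda-index-complexity}, which in turn rests on the general machinery of inner and outer intervals --- namely \(r_n(S)=\out_n(S)\) for every nonempty symmetric set (Proposition~\ref{pro:index-complexity-symmetric}), \(\ol{\Lambda}_n(S)=\inn_n(\{0,1\}^n\setminus S)\) (Fact~\ref{fact:lambda}), and the identity \(\inn_n(\{0,1\}^n\setminus S)+\out_n(S)=n\) with equality precisely when \(S\) or its complement is a peripheral interval (Proposition~\ref{pro:inner-outer}); for a layer the complement is the peripheral interval \(J_{n,w-1,w+1}\), so equality holds. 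You instead unpack both definitions directly for a single layer: the computation of \(\mu_n(\{0,1\}^n\setminus S)=\min\{w,n-w\}\) is exactly right, and your two-sided argument for \(r_n(S)=\min\{w,n-w\}\) (separating coordinates via the support or co-support of a point, and the binomial count \(\binom{n-|I|}{w-a}\ge2\) for the lower bound) is sound, including the singleton edge cases \(w\in\{0,n\}\). What your approach buys is a short, self-contained verification requiring none of the inner/outer-interval apparatus; what the paper's approach buys is that the layer case falls out of results needed anyway for general symmetric sets. One cosmetic remark: the displayed inequality chain justifying \(w-a<n-|I|\) in your Step~2 is garbled as written; the clean justification is \(n-|I|>n-\min\{w,n-w\}=\max\{w,n-w\}\ge w\ge w-a\). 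This is a presentational slip, not a gap.
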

\noindent Such an equality is no longer true for general symmetric sets.  We can, in fact, precisely understand the general case combinatorially.  We introduce some terminology before we proceed.

For any \(a\in[-1,n-1],\,b\in[1,n+1],\,a<b\), denote the set of weights \(I_{n,a,b}=[0,a]\cup[b,n]\), and we say a \tsf{peripheral interval} is the symmetric set \(J_{n,a,b}\subseteq\{0,1\}^n\) defined by \(W_n(J_{n,a,b})=I_{n,a,b}\).\footnote{Here, we have the convention \([0,-1]=[n+1,n]=\emptyset\).}  We will consider \emph{inner and outer approximations} of a symmetric set.

Let \(S\subseteq\{0,1\}^n\) be a symmetric set.
\begin{itemize}
	\item  If \(S\subsetneq\{0,1\}^n\), then the \tsf{inner interval} of \(S\), denoted by \(\innint(S)\), is defined to be the peripheral interval \(J_{n,a,b}\subseteq\{0,1\}^n\) of maximum size such that \(J_{n,a,b}\subseteq S\).  Further, we define \(\innint(\{0,1\}^n)=J_{n,\lfloor n/2\rfloor,\lfloor n/2\rfloor+1}\).
	
	\item  Let \(\mc{O}(S)\) be the collection of all peripheral intervals \(J_{n,a,b}\) such that \(S\subseteq J_{n,a,b}\) and \(I_{n,a,b}=W_n(J_{n,a,b})\) has minimum size.  It is easy to check that there exists either a unique peripheral interval \(J_{n,a,b}\in\mc{O}(S)\), or exactly a pair of peripheral intervals \(J_{n,a,b},J_{n,n-b,n-a}\in\mc{O}(S)\) such that the quantity \(|a+b-n|\) is minimum.  The \tsf{outer interval} of \(S\), denoted by \(\outint(S)\), is defined by
	\[
	\outint(S)=\begin{cases}
		J_{n,a,b}&\tx{if }J_{n,a,b}\tx{ is the unique minimizer of }|a+b-n|,\\
		J_{n,a,b}&\tx{if }J_{n,a,b},J_{n,n-b,n-a}\tx{ are minimizers of }|a+b-n|,\tx{ and }a>n-b.
	\end{cases}
	\]
\end{itemize}
We will elaborate a bit on the definitions in the Preliminaries (Section~\ref{subsec:prelims-inner-outer-peripheral}), and further discuss the uniqueness (and therefore, well-definedness) of inner and outer intervals, along with some illustrations, in Appendix~\ref{app:inner-outer}.  Now define
\begin{align*}
	\inn_n(S)&=(\min\{a,n-b\}+1)+|W_n(S)\setminus W_{n,\min\{a,n-b\}+1}|&&\tx{where }J_{n,a,b}=\innint(S),\\
	\tx{and }\out_n(S)&=a+n-b+1=|I_{n,a,b}|-1&&\tx{where }J_{n,a,b}=\outint(S).
\end{align*}

Towards understanding the index complexity of general symmetric sets, we obtain the following important relation between inner and outer intervals of symmetric sets.
\begin{proposition}\label{pro:inner-outer}
	For any nonempty symmetric set \(S\subseteq\{0,1\}^n\), we have
	\[
	\inn_n(\{0,1\}^n\setminus S)+\out_n(S)\ge n.
	\]
	Further, equality holds if and only if either \(S\) or \(\{0,1\}^n\setminus S\) is a peripheral interval.
\end{proposition}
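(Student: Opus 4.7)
The plan is to reduce the inequality to an elementary counting bound on the weight set of $T \coloneqq \{0,1\}^n \setminus S$, by first rewriting both quantities in more transparent form. Specifically, I would show that $\inn_n(T) = \Lambda_n(T) = |W_n(T)| - \mu_n(T)$ and $\out_n(S) = n - R$, where $R$ is the length of the longest run of consecutive integers in $W_n(T)$ (with $R = 0$ if $T = \emptyset$). For the first identity, write $\innint(T) = J_{n,a_i,b_i}$; the peripheral interval $W_{n, \mu_n(T)}$ lies in $T$, hence, by the unique inclusion-maximality of $\innint(T)$ when $T \ne \{0,1\}^n$, also in $\innint(T)$, yielding $\mu_n(T) \le \min\{a_i, n-b_i\} + 1$. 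Conversely, $W_{n, \min\{a_i, n - b_i\} + 1} \subseteq \innint(T) \subseteq T$ gives the reverse inequality, so $\min\{a_i, n - b_i\} + 1 = \mu_n(T)$, and the defining formula for $\inn_n$ collapses to $|W_n(T)| - \mu_n(T)$. For the second identity, minimising $|I_{n,a_o,b_o}|$ over peripheral intervals $J_{n,a_o,b_o} \supseteq S$ is equivalent to maximising the length $b_o - a_o - 1$ of a contiguous window $[a_o+1, b_o-1]$ disjoint from $W_n(S)$, i.e., contained in $W_n(T)$; this maximum is exactly $R$.

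Plugging these in, the proposition becomes the elementary claim $|W_n(T)| \ge \mu_n(T) + R$. To prove this, set $i \coloneqq \mu_n(T)$ and fix a longest run $[c, d] \subseteq W_n(T)$ of length $R$. Since $W_n(T) \supseteq W_{n, i} \cup [c, d]$ with $W_{n, i} = [0, i-1] \cup [n-i+1, n]$, I would split cases on whether $[c, d]$ meets the low arm $[0, i-1]$ and/or the high arm $[n-i+1, n]$ of $W_{n, i}$. If it meets neither, then $|W_n(T)| \ge 2i + R \ge i + R$. If it meets only the low arm (the high-arm case being symmetric), then either $[c, d] \subseteq [0, i-1]$ (so $R \le i$ and $|W_n(T)| \ge 2i \ge R + i$), or $d \ge i$ (so $[c, d] \cup W_{n, i} = [0, d] \cup [n-i+1, n]$ has size $(d+1) + i \ge R + i$, using $c \ge 0$). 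If $[c, d]$ meets both arms, then $[c, d] \cup W_{n, i} = [0, n]$, forcing $T = \{0,1\}^n$ and $S = \emptyset$, contrary to hypothesis.

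Tracking when equality holds in each subcase yields the stated characterisation. The disjoint case forces $i = 0$ and $W_n(T) = [c, d]$, so $W_n(S) = [0, c-1] \cup [d+1, n]$ and $S$ is a peripheral interval. The one-arm subcases force $W_n(T)$ to equal either $W_{n, i}$ or $[0, d] \cup [n-i+1, n]$, each of which is itself a peripheral interval, making $T$ a peripheral interval. The converse direction, that $S$ or $T$ being a peripheral interval makes the sum exactly $n$, is a direct substitution using the two reformulations. The main obstacle I anticipate is the bookkeeping in the case analysis, in particular verifying that the maximality of $[c, d]$ as a contiguous block together with the geometry of $W_{n, i}$ rules out the degenerate both-arm scenario; once that is handled, the remainder is a transparent counting argument.
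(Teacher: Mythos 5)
Your proposal is correct and, after unwinding the reformulations ($\inn_n(\{0,1\}^n\setminus S)=|W_n(\{0,1\}^n\setminus S)|-\ol{\mu}_n(S)$ and $\out_n(S)=n-R$ with $R$ the longest gap in $W_n(S)$), it coincides in substance with the paper's first, combinatorial proof: both reduce to showing that the longest run in the complementary weight set can meet at most one arm of $W_{n,\ol{\mu}_n(S)}$ and then count. The paper additionally records a second proof via the polynomial method (Alon--F\"uredi applied to an explicit product polynomial), which your argument does not need.
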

\noindent  We are now ready to characterize the index complexity of symmetric sets.
\begin{proposition}\label{pro:index-complexity-symmetric}
	For any nonempty symmetric set \(S\subseteq\{0,1\}^n\), we have \(r_n(S)=\out_n(S)\).
\end{proposition}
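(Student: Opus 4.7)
The plan is to establish two matching inequalities. Throughout, write $\outint(S) = J_{n,a,b}$ and $s = \out_n(S) = a + n - b + 1$. I would first record a basic extremality observation: by minimality in the definition of $\outint(S)$, one has $a \in W_n(S)$ whenever $a \ge 0$ (otherwise $J_{n,a-1,b}$ would contain $S$ and be strictly smaller), and similarly $b \in W_n(S)$ whenever $b \le n$. The tie-breaking in the definition of $\outint(S)$ is irrelevant for the proof, since any two minimizers of $|I_{n,a,b}|$ give the same value of $a + n - b + 1$.

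For the upper bound $r_n(S) \le \out_n(S)$, I would exhibit, in the generic regime $a \ge 0$ and $b \le n$, the point $u = (0^{n-b+1}, 1^a, 0^{b-a-1}) \in S$ of weight $a$, together with the index set $I = [1, s]$. Then $u|_I = (0^{n-b+1}, 1^a)$, and any $v \in S$ agreeing with $u$ on $I$ has exactly $a$ ones on $I$ and therefore weight in $[a, b-1]$. Since $W_n(S) \subseteq [0,a] \cup [b,n]$, this weight must equal $a$, forcing $v|_{[n] \setminus I} = 0^{b-a-1}$ and hence $v = u$. The boundary regimes $a = -1$ (so $W_n(S) \subseteq [b, n]$) and $b = n+1$ (so $W_n(S) \subseteq [0, a]$) are handled by the obvious one-sided variants, namely $u = (0^{n-b}, 1^b)$ with $I = [1, n-b]$, or $u = (1^a, 0^{n-a})$ with $I = [1, a]$, respectively; in each case the same pigeonhole-by-weight argument confines any competing $v$ to equal $u$.

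For the lower bound $r_n(S) \ge \out_n(S)$, let $r = r_n(S)$ and let $(I, u)$ witness this value. Since $S$ is permutation-invariant, I would permute coordinates to arrange $I = [1, r]$ and then bring $u$ into the sorted form $u = (0^{r-\alpha}, 1^\alpha, 0^{n-r-\beta}, 1^\beta)$ for some $\alpha \in [0, r]$ and $\beta \in [0, n-r]$. The separability condition translates to
\[
1 \;=\; \bigl|\{v \in S : v|_I = u|_I\}\bigr| \;=\; \sum_{w \,\in\, W_n(S) \cap [\alpha,\, \alpha + n - r]} \binom{n - r}{w - \alpha}.
\]
Since each binomial coefficient is a positive integer, this singleton condition forces the sum to consist of a single term whose binomial equals one, so $W_n(S) \cap [\alpha, \alpha + n - r] = \{w^\ast\}$ with $w^\ast \in \{\alpha,\, \alpha + n - r\}$. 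In either case $W_n(S)$ embeds into a peripheral interval whose weight set has size exactly $r + 1$, which yields $\out_n(S) \le r$ directly from the definition of the outer interval.

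The main obstacle I anticipate is bookkeeping in the degenerate ranges of $\outint(S)$ for the upper bound, where I must verify that the formula $s = a + n - b + 1$ continues to agree with the ad-hoc one-sided constructions at $a = -1$ or $b = n+1$. The sorting step in the lower bound is conceptually routine given the symmetry of $S$, but requires a clean note that the permutation action simultaneously preserves $S$ and the separability hypothesis; after that, the decisive ingredient is the elementary fact that $\binom{n-r}{k} = 1$ only at $k \in \{0, n-r\}$, which is precisely what pins down the peripheral shape of $W_n(S)$.
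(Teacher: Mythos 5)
Your proposal is correct. The upper bound is essentially the paper's argument: both exhibit an extremal point of weight $a$ (or its one-sided analogue) together with a set of $\out_n(S)=a+n-b+1$ coordinates on which it is pinned down, using the same extremality fact that $a,b\in W_n(S)$ when they lie in $[0,n]$; your choice of $u$ and $I$ differs only cosmetically from the paper's $p=1^a0^{n-a}$, $I=[1,a+n-b+1]$ (the paper first normalizes to $a\ge n-b$, which you avoid by handling the two tails symmetrically). The lower bound, however, takes a genuinely different route. The paper proves a separate separation lemma (Lemma~\ref{lem:sym-index}) identifying the maximal symmetric set separated from $p$ by $(I_0,I_1)$ as the peripheral interval $J_{n,|I_1|-1,n-|I_0|+1}$, and then runs a two-case analysis against the minimality of the outer interval. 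You instead normalize by the symmetry of $S$ and \emph{count}: the points of $S$ agreeing with $u$ on $I$ number $\sum_{w\in W_n(S)\cap[\alpha,\alpha+n-r]}\binom{n-r}{w-\alpha}$, and forcing this sum to equal $1$ pins the middle weights of $W_n(S)$ down to a single endpoint of $[\alpha,\alpha+n-r]$, whence $W_n(S)$ sits inside a peripheral weight set of size $r+1$ and $\out_n(S)\le r$ follows directly from minimality. This buys a shorter, self-contained argument that dispenses with the separation lemma and its case split, at the cost of the normalization step (which is legitimate, since coordinate permutations preserve both $S$ and the separability witness) and of not producing the explicit description of $\sep(p,I_0,I_1)$ that the paper reuses elsewhere. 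The decisive elementary fact $\binom{n-r}{k}=1\iff k\in\{0,n-r\}$ is applied correctly, and your handling of the degenerate one-sided cases is consistent with the formula $\out_n(S)=a+n-b+1$.
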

\noindent Also, the following is trivial, by definitions.
\begin{fact}[By definitions]\label{fact:lambda}
	For any symmetric set \(S\subsetneq\{0,1\}^n\), we have \(\Lambda_n(S)=\inn_n(S)\).
\end{fact}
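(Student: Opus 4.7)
The plan is to identify $\mu_n(S)$ with $\min\{a, n-b\} + 1$, where $J_{n,a,b} = \innint(S)$; the equality $\Lambda_n(S) = \inn_n(S)$ then drops out by an elementary counting argument. Set $m = \min\{a, n-b\}$.

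For $\mu_n(S) \ge m+1$, I would note that since $m \le a$ and $m \le n-b$, we have
$$W_{n,m+1} = [0,m] \cup [n-m, n] \subseteq [0, a] \cup [b, n] = W_n(J_{n,a,b}) \subseteq W_n(S).$$
For the reverse inequality $\mu_n(S) \le m+1$, the assumption $S \subsetneq \{0,1\}^n$ forces $a+1 < b$; maximality of $\innint(S)$ as a peripheral interval in $S$ then yields $a+1, b-1 \notin W_n(S)$, since otherwise $J_{n,a+1,b}$ or $J_{n,a,b-1}$ would be a strictly larger peripheral interval contained in $S$. Now $W_{n,m+2} = [0, m+1] \cup [n-m-1, n]$, and either $m+1 = a+1$ (when $m = a$) or $n-m-1 = b-1$ (when $m = n-b$); in either case $W_{n,m+2}$ contains a point missing from $W_n(S)$, so $\mu_n(S) \le m+1$.

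With $\mu_n(S) = m+1$ established, I would invoke the elementary identity $|W_{n,i}| = 2i$ for $i \in [0, \lceil n/2 \rceil]$ (a quick parity-based check handles the boundary case) together with $W_{n,\mu_n(S)} \subseteq W_n(S)$ to conclude
\begin{align*}
\Lambda_n(S) &= |W_n(S)| - \mu_n(S) = \mu_n(S) + |W_n(S) \setminus W_{n, \mu_n(S)}| \\
&= (m+1) + |W_n(S) \setminus W_{n, m+1}| = \inn_n(S),
\end{align*}
where the final equality is just the definition of $\inn_n(S)$.

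There is no genuine obstacle; the proof is pure bookkeeping with definitions. The only minor care required is verifying $m+1 \le \lceil n/2 \rceil$, so that $\mu_n(S)$ may indeed take the value $m+1$ and the cardinality formula $|W_{n,m+1}| = 2(m+1)$ applies; this follows from combining $a+1 < b$ with whichever of $a \le n-b$ or $n-b \le a$ defines $m$, giving $2(m+1) \le n$.
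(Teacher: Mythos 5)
Your proposal is correct and is exactly the definitional unwinding the paper intends for this fact (which it states without proof, as "By definitions"): identifying $\mu_n(S)=\min\{a,n-b\}+1$ for $J_{n,a,b}=\innint(S)$ via the nesting of the sets $W_{n,i}$ and the maximality of the inner interval, and then using $|W_{n,i}|=2i$ together with $W_{n,\mu_n(S)}\subseteq W_n(S)$ to convert $|W_n(S)|-\mu_n(S)$ into $\mu_n(S)+|W_n(S)\setminus W_{n,\mu_n(S)}|=\inn_n(S)$. Your side checks ($a+1<b$ from $S\subsetneq\{0,1\}^n$, and $m+1\le\lceil n/2\rceil$) are the right ones and are handled correctly.
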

\noindent The following is then an immediate corollary of Proposition~\ref{pro:inner-outer}, Proposition~\ref{pro:index-complexity-symmetric}, and Fact~\ref{fact:lambda}.
\begin{corollary}\label{cor:lambda-index-complexity}
	For any nonempty symmetric set \(S\subseteq\{0,1\}^n\), we have
	\[
	\ol{\Lambda}_n(S)\ge n-r_n(S).
	\]
	Further, equality holds if and only if either \(S\) or \(\{0,1\}^n\setminus S\) is a peripheral interval.
\end{corollary}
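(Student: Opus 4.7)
The plan is to combine the three stated ingredients directly; no new combinatorial work is needed, since the corollary is essentially a rewriting exercise.

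First I would invoke Fact~\ref{fact:lambda} applied to the complement: since \(S\) is nonempty, \(\{0,1\}^n\setminus S\subsetneq\{0,1\}^n\), so Fact~\ref{fact:lambda} gives \(\ol{\Lambda}_n(S)=\Lambda_n(\{0,1\}^n\setminus S)=\inn_n(\{0,1\}^n\setminus S)\). Next I would apply Proposition~\ref{pro:index-complexity-symmetric} to rewrite the right-hand side as \(n-r_n(S)=n-\out_n(S)\). After these two substitutions, the inequality \(\ol{\Lambda}_n(S)\ge n-r_n(S)\) becomes exactly
\[
\inn_n(\{0,1\}^n\setminus S)\ge n-\out_n(S),
\]
which is a trivial rearrangement of the bound \(\inn_n(\{0,1\}^n\setminus S)+\out_n(S)\ge n\) furnished by Proposition~\ref{pro:inner-outer}.

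For the equality case, I would simply chase the same substitutions through the equivalence in Proposition~\ref{pro:inner-outer}: \(\ol{\Lambda}_n(S)=n-r_n(S)\) holds if and only if \(\inn_n(\{0,1\}^n\setminus S)+\out_n(S)=n\), which by Proposition~\ref{pro:inner-outer} occurs precisely when either \(S\) or \(\{0,1\}^n\setminus S\) is a peripheral interval.

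Since the entire argument is a substitution, there is no real obstacle; the only step that warrants a moment of care is verifying that Fact~\ref{fact:lambda} is applicable to \(\{0,1\}^n\setminus S\), which is where the hypothesis that \(S\) is nonempty gets used. (Note also that Proposition~\ref{pro:inner-outer} itself requires \(S\) to be nonempty, which matches our hypothesis.) No further work is required.
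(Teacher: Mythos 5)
Your proposal is correct and matches the paper's argument exactly: the paper states the corollary as an immediate consequence of Proposition~\ref{pro:inner-outer}, Proposition~\ref{pro:index-complexity-symmetric}, and Fact~\ref{fact:lambda}, which is precisely the substitution chain you carry out. Your remark about where the nonemptiness hypothesis is used is a sensible sanity check and raises no issues.
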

\noindent  Note that if \(S\subseteq\{0,1\}^n\) is a layer, then \(\{0,1\}^n\setminus S\) is a peripheral interval, and hence Corollary~\ref{cor:lambda-index-complexity} recovers Proposition~\ref{pro:layer-index-complexity}.

\subsubsection{Covering symmetric sets}\label{subsubsec:covering-symmetric}

  Note that~\cite{ghosh-kayal-nandi-2023-covering} had disproved Conjecture~\ref{conj:venkitesh-EHC}, but not characterized \(\EHC_n^{(1,0)}(S)\) for all symmetric sets \(S\subsetneq\{0,1\}^n\).  We obtain this characterization here, and more, with our first main result.  (In particular, this answers a question of the fourth author~\cite[Open Problem 36]{venkitesh-2022-covering}.)  Our first main result extends Theorem~\ref{thm:venkitesh} and Theorem~\ref{thm:ghosh-kayal-nandi}, and answers Question~\ref{ques:main} in the affirmative for symmetric sets, with \(t\ge1,\,\ell=t-1\).  As a proof attempt, for a general symmetric set, we may directly apply Theorem~\ref{thm:EPC-index} (which was obtained in~\cite{ghosh-kayal-nandi-2023-covering} by the polynomial method), and then attempt to find a tight construction.  This would require a precise understanding of the index complexity of symmetric sets, which we obtain in Proposition~\ref{pro:index-complexity-symmetric}.  However, the lower bound obtained Thus, is \emph{weak}.  It turns out that the tight lower bound is larger, and the gap is, in fact, exactly captured by Corollary~\ref{cor:lambda-index-complexity}!

For convenience, we will state the result in terms of complements of symmetric sets (which are also symmetric).  This will be an important distinction in an extended setting, which we consider later.
\begin{theorem}\label{thm:multiplicity-symmetric}
	For any nonempty symmetric set \(S\subseteq\{0,1\}^n\) and \(t\ge1\), we have
	\[
	\EHC_n^{(t,t-1)}(\{0,1\}^n\setminus S)=\EPC_n^{(t,t-1)}(\{0,1\}^n\setminus S)=\ol{\Lambda}_n(S)+2t-2.
	\]
\end{theorem}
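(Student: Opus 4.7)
The proof adopts the sandwiching technique described in Section~\ref{subsubsec:technique}. It suffices to establish
\[
\EPC_n^{(t,t-1)}(\{0,1\}^n\setminus S)\;\ge\;\ol{\Lambda}_n(S)+2t-2
\qquad\tx{and}\qquad
\EHC_n^{(t,t-1)}(\{0,1\}^n\setminus S)\;\le\;\ol{\Lambda}_n(S)+2t-2,
\]
after which the general inequality $\EHC\ge\EPC$ (Appendix~\ref{app:EHC-EPC}) forces all three quantities to coincide.

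For the upper bound, write $T=\{0,1\}^n\setminus S$ and $\mu=\mu_n(T)$, so that $\ol{\Lambda}_n(S)=\Lambda_n(T)=|W_n(T)|-\mu$.  My candidate hyperplane multiset is
\[
\mc{H}^{\circ(t-1)}\;\sqcup\;\mc{H}^*_\mu\;\sqcup\;\mc{H}'_{W_n(T)\setminus W_{n,\mu}}.
\]
Because $S$ is nonempty, $\mu\le\lfloor n/2\rfloor$ and hence $|W_{n,\mu}|=2\mu$, so the total multiset size equals $2(t-1)+\mu+(|W_n(T)|-2\mu)=\ol{\Lambda}_n(S)+2t-2$.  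The multiplicity bookkeeping is straightforward: $\mc{H}^{\circ(t-1)}$ contributes $t-1$ to every point of $\{0,1\}^n$; by Lemma~\ref{lem:T}, $\mc{H}^*_\mu$ contributes at least $1$ to each point of $T_{n,\mu}\subseteq T$ and $0$ to every point of $\{0,1\}^n\setminus T_{n,\mu}$; and $\mc{H}'_{W_n(T)\setminus W_{n,\mu}}$ contributes exactly $1$ to each point with weight in $W_n(T)\setminus W_{n,\mu}$ and $0$ otherwise.  Each $x\in T$ has weight in $W_n(T)=W_{n,\mu}\sqcup(W_n(T)\setminus W_{n,\mu})$, so it picks up at least $1$ from one of the last two families, giving total multiplicity at least $t$; each $x\in S$ has weight outside $W_n(T)\supseteq W_{n,\mu}$, so it picks up $0$ from those families, giving total multiplicity exactly $t-1$.

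For the lower bound, the base case $t=1$ is Theorem~\ref{thm:venkitesh}(a) applied to the symmetric set $T$, yielding $\EPC_n^{(1,0)}(T)=\Lambda_n(T)=\ol{\Lambda}_n(S)$.  For $t\ge2$, given any $(t,t-1)$-exact polynomial cover $P$ of $T$, the plan is to extract from $P$ a $(t-1,t-2)$-exact polynomial cover $Q$ of $T$ with $\deg Q\le\deg P-2$; the inductive hypothesis $\deg Q\ge\ol{\Lambda}_n(S)+2t-4$ then forces $\deg P\ge\ol{\Lambda}_n(S)+2t-2$.  The construction of $Q$ exploits the fact that $P$ vanishes on all of $\{0,1\}^n$ with multiplicity at least $t-1$, so that for every choice of $(a_2,\ldots,a_n)\in\{0,1\}^{n-1}$ the univariate restriction $P(X_1,a_2,\ldots,a_n)$ is divisible by $X_1(X_1-1)$.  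A carefully tuned global polynomial division of $P$ by $X_1(X_1-1)$ then yields $Q$ of degree $\deg P-2$, and the vanishing orders of $P$ descend to the expected orders on $Q$ after accounting for the division residue.

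The main obstacle is this inductive extraction: a naive Euclidean division $P=X_1(X_1-1)\,Q+R$ in the variable $X_1$ produces a nonzero remainder $R$ of $X_1$-degree less than $2$, and the technical heart of the argument is to show that the resulting $Q$ (i) is nonzero, (ii) retains multiplicity at least $t-1$ on $T$, and (iii) has multiplicity \emph{exactly} $t-2$ on $S$, i.e., the vanishing order at each $a\in S$ drops by exactly one.  The symmetry of $S$ will be used to average the division over all $n$ coordinate variables so that the multiplicity drop is uniform across each orbit inside $S$.  Should this direct route stall, the fallback is to adapt the Sauermann--Wigderson-style multiplicity Combinatorial Nullstellensatz machinery of~\cite{sauermann-wigderson-2022-covering, ghosh-kayal-nandi-2023-covering}, refined so that the polynomial-method step leverages the stronger combinatorial measure $\Lambda_n(T)=\inn_n(T)$ (Fact~\ref{fact:lambda}) rather than the weaker index-complexity bound $n-r_n(S)$ underlying Theorem~\ref{thm:EPC-index}.
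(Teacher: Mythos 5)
Your upper bound is exactly the paper's construction (Example~\ref{ex:multi-symm}), and your bookkeeping for it is correct. The lower bound, however, has a genuine gap: the entire argument for \(t\ge2\) rests on extracting from a \((t,t-1)\)-exact cover \(P\) of \(T\) a \((t-1,t-2)\)-exact cover \(Q\) with \(\deg Q\le\deg P-2\), and this extraction is never established --- you flag it yourself as the ``technical heart'' and leave it open. Worse, the proposed mechanism does not work as described. Writing \(P=X_1(X_1-1)Q+R\) with \(\deg_{X_1}R\le1\), the remainder \(R=R_0(X_2,\dots,X_n)+X_1R_1(X_2,\dots,X_n)\) is only guaranteed to vanish on \(\{0,1\}^{n-1}\) to order one, not to order \(t-1\). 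Since \(\mult(X_1(X_1-1)Q,a)=1+\mult(Q,a)\) at any \(a\in\{0,1\}^n\), the best one can say is
\[
1+\mult(Q,a)=\mult(P-R,a)\ge\min\{\mult(P,a),\mult(R,a)\},
\]
and \(\mult(R,a)\) may equal \(1\), so \(\mult(Q,a)\) can drop to \(0\) even when \(\mult(P,a)\ge t\). Averaging the division over the \(n\) coordinates does not repair this, and controlling the \emph{exact} multiplicity \(t-2\) on \(S\) is an additional unaddressed difficulty (the sensitivity of these problems to the exact-multiplicity condition is precisely why \(\EPC_n^{(t,\ell)}\) differs for \(\ell=t-1\) versus \(\ell<t-1\) in Theorem~\ref{thm:sauermann-wigderson}). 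The ``fallback'' of adapting the Sauermann--Wigderson machinery is a pointer, not an argument.

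For comparison, the paper's lower bound (the \(k=1\) case of the proof of Theorem~\ref{thm:EPC-PDC}) avoids any induction on \(t\). Given a \((t,t-1)\)-exact cover \(P\) of \(\{0,1\}^n\setminus S\), set \(\ol{\mu}=\ol{\mu}_n(S)\) (with, say, \(\ol{\mu}\in W_n(S)\)) and multiply: \(Q=P\cdot\mc{H}'_{W_n(S)\setminus\{\ol{\mu}\}}\). The extra linear factors raise the multiplicity to at least \(t\) on every layer of \(S\) other than the layer \(L\) of weight \(\ol{\mu}\), while leaving the multiplicity exactly \(t-1\) on \(L\); hence \(Q\) is a \((t,t-1)\)-exact cover of \(\{0,1\}^n\setminus L\), and the already-known index-complexity bound (Theorem~\ref{thm:EPC-index} together with Proposition~\ref{pro:layer-index-complexity}) gives \(\deg Q\ge n-\min\{\ol{\mu},n-\ol{\mu}\}+2t-2\). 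Subtracting \(\deg\mc{H}'_{W_n(S)\setminus\{\ol{\mu}\}}=|W_n(S)|-1\) yields \(\deg P\ge\ol{\Lambda}_n(S)+2t-2\). If you want to salvage your write-up, replacing the inductive division step with this multiplication trick is the missing idea.
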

\noindent Interestingly, we obtain the tight bound in Theorem~\ref{thm:multiplicity-symmetric} since our instantiation of the polynomial method turns out to be \emph{stronger} than that in the proof of Theorem~\ref{thm:EPC-index} by~\cite{ghosh-kayal-nandi-2023-covering}.  This relative strength is also captured exactly by Corollary~\ref{cor:lambda-index-complexity}!
\begin{remark}\label{rem:first-EHC-equivalent}
	The proof of Theorem~\ref{thm:multiplicity-symmetric} will, in fact, show that for any nonempty symmetric set \(S\subseteq\{0,1\}^n\) and \(t\ge1\), we have
	\begin{align*}
		\EHC_n^{(t,t-1)}(\{0,1\}^n\setminus S)&=\EHC_n^{(1,0)}(\{0,1\}^n\setminus S)+2t-2\\
		&=\ol{\Lambda}_n(S)+2t-2\\
		&=\EPC_n^{(1,0)}(\{0,1\}^n\setminus S)+2t-2=\EPC_n^{(t,t-1)}(\{0,1\}^n\setminus S).
	\end{align*}
\end{remark}
\noindent A simple generalization of Example~\ref{ex:GKN-EHC} gives a construction implying the equality in Theorem~\ref{thm:multiplicity-symmetric}.
\begin{example}\label{ex:multi-symm}
	Let \(S\subseteq\{0,1\}^n\) be a nonempty symmetric set, and \(t\ge1\).  Then the collection of hyperplanes
	\[
	\mc{H}^*_{\ol{\mu}_n(S)}(\mb{X})\sqcup\mc{H}'_{W_n(\{0,1\}^n\setminus S)\setminus W_{n,\ol{\mu}_n(S)}}(\mb{X})\sqcup\mc{H}^{\circ(t-1)}(\mb{X})
	\]
	witnesses the equality in Theorem~\ref{thm:multiplicity-symmetric}.
\end{example}

\subsubsection{Covering special \emph{\(k\)-wise symmetric} sets}\label{subsubsec:covering-k-symmetric}

Fix a positive integer \(k\ge1\), and consider the hypercube \(\{0,1\}^N\) as a product of \(k\) hypercubes \(\{0,1\}^N=\{0,1\}^{n_1}\times\cdots\times\{0,1\}^{n_k}\) (and so \(N=n_1+\cdots+n_k\)).  We would like to extend the notion of symmetric sets to subsets in \(\{0,1\}^N\) which also respect the structure of \(\{0,1\}^N\) as a \emph{product of \(k\) blocks}.  We define a subset \(S\subseteq\{0,1\}^N\) to be a \tsf{\(k\)-wise grid} if \(S=S_1\times\cdots\times S_k\), where each \(S_i\subseteq\{0,1\}^{n_i}\) is symmetric.  Further, we say \(S=S_1\times\cdots\times S_k\) is a \tsf{\(k\)-wise layer} if each \(S_i\) is a layer.  Then we define a general \tsf{\(k\)-wise symmetric set} to be a union of an arbitrary collection of \(k\)-wise layers.

Note that every \(k\)-wise grid \(S_1\times\cdots\times S_k\) is a \(k\)-wise symmetric set, as given by
\[
S=\bigsqcup_{\tx{layer }L_i\subseteq S_i,\,i\in[k]}(L_1\times\cdots\times L_k),
\]
but the converse is not true.  For instance, the complement of a \(k\)-wise layer \(L_1\times\cdots\times L_k\) is \(k\)-wise symmetric, as given by
\[
\{0,1\}^N\setminus(L_1\times\cdots\times L_k)=\bigsqcup_{\substack{\emptyset\ne I\subseteq[k]\\\tx{layer }\wt{L}_i\subseteq\{0,1\}^{n_i},\,\wt{L}_i\ne L_i,\,i\in I\\\tx{layer }L'_i\subseteq\{0,1\}^{n_i},\,i\not\in I\phantom{,\,\wt{L}_i\ne L_i}}}\bigg(\prod_{i\in I}\wt{L}_i\bigg)\times\bigg(\prod_{i\not\in I}L'_i\bigg),
\]
which is clearly not a \(k\)-wise grid.

\subsubsection*{Covering complements of \(k\)-wise grids}\label{subsubsec:covering-k-grid-complement}

Our second main result extends Theorem~\ref{thm:multiplicity-symmetric} to complements of \(k\)-wise grids, Thus, answering Question~\ref{ques:main} in the affirmative in this case.
\begin{theorem}\label{thm:multiplicity-block-symmetric}
	For any nonempty \(k\)-wise grid \(S=S_1\times\cdots\times S_k\subseteq\{0,1\}^N\) and \(t\ge1\), we have
	\[
	\EHC_N^{(t,t-1)}(\{0,1\}^N\setminus S)=\EPC_N^{(t,t-1)}(\{0,1\}^N\setminus S)=\sum_{i=1}^k \ol{\Lambda}_{n_i}(S_i)+2t-2.
	\]
\end{theorem}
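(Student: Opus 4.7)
The plan follows the template of Section~\ref{subsubsec:technique}: establish a lower bound $\EPC_N^{(t,t-1)}(\{0,1\}^N\setminus S)\ge L+2t-2$ via the polynomial method and exhibit a matching hyperplane cover of size $L+2t-2$, where $L:=\sum_{i=1}^k\ol{\Lambda}_{n_i}(S_i)$; combined with the general inequality $\EHC\ge\EPC$, this forces equality throughout. For the upper bound, for each $i\in[k]$ I would invoke Theorem~\ref{thm:multiplicity-symmetric} at $t=1$ to obtain a $(1,0)$-exact hyperplane cover $\mc{H}_i\subseteq\mb{R}[\mb{X}_i]$ of $\{0,1\}^{n_i}\setminus S_i$ of size $\ol{\Lambda}_{n_i}(S_i)$---concretely, the $t=1$ specialization of Example~\ref{ex:multi-symm}. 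Each hyperplane in $\mc{H}_i$ depends only on $\mb{X}_i$ and lifts to $\mb{R}^N$. The multiset $\bigsqcup_{i=1}^k\mc{H}_i\sqcup\mc{H}^{\circ(t-1)}(\mb{X})$ has size $L+2(t-1)$ and is a $(t,t-1)$-exact cover of $\{0,1\}^N\setminus S$: any $x\in S$ has every $x_i\in S_i$, missing all of $\bigsqcup_i\mc{H}_i$ and picking up exactly $t-1$ incidences from $\mc{H}^{\circ(t-1)}(\mb{X})$ (since $X_{1,1}\in\{0,1\}$); any $x\notin S$ has some $x_i\notin S_i$, contributing at least one extra incidence for a total of at least $t$.

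\textbf{Lower bound via a $(1,0)$-companion.} For the lower bound I would induct on $k$, with base case $k=1$ given by Theorem~\ref{thm:multiplicity-symmetric}. A convenient intermediate lemma, which I would also establish by induction on $k$, is: \emph{every nonzero multilinear polynomial $Q\in\mb{R}[\mb{X}]$ vanishing on $\{0,1\}^N\setminus S$ has $\deg Q\ge L$}. For the inductive step, decompose $Q=\sum_{\alpha\in\{0,1\}^{n_k}}R_\alpha(\mb{X}_{<k})\mb{X}_k^\alpha$ and pick $(a_{<k},a_k)\in S$ with $Q(a_{<k},a_k)\ne 0$ (such a point exists because $Q$ is nonzero and vanishes off $S$). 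The restriction $Q(a_{<k},\mb{X}_k)$ is then a nonzero multilinear polynomial in $\mb{X}_k$ vanishing on $\{0,1\}^{n_k}\setminus S_k$, so Theorem~\ref{thm:venkitesh}(a) (in the ``nonzero somewhere'' form emerging from its proof, which serves as the base case of the companion) yields some $\alpha_0$ with $|\alpha_0|\ge\ol{\Lambda}_{n_k}(S_k)$ and $R_{\alpha_0}(a_{<k})\ne 0$. Moreover, for any $y_{<k}\in\{0,1\}^{N-n_k}\setminus\prod_{i<k}S_i$, the multilinear polynomial $Q(y_{<k},\mb{X}_k)$ vanishes on all of $\{0,1\}^{n_k}$ (as $(y_{<k},b)\in\{0,1\}^N\setminus S$ for every $b$), hence is identically zero, forcing $R_{\alpha_0}(y_{<k})=0$. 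Thus $R_{\alpha_0}$ is a nonzero multilinear polynomial in $\mb{X}_{<k}$ vanishing on $\{0,1\}^{N-n_k}\setminus\prod_{i<k}S_i$; the inductive hypothesis gives $\deg R_{\alpha_0}\ge L_{<k}:=\sum_{i<k}\ol{\Lambda}_{n_i}(S_i)$, and therefore $\deg Q\ge\deg R_{\alpha_0}+|\alpha_0|\ge L$.

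\textbf{Multiplicity boost and main obstacle.} With the companion, the $(t,t-1)$ bound is pushed through by adapting the Sauermann-Wigderson-style technique underlying Theorem~\ref{thm:multiplicity-symmetric}. Given a $(t,t-1)$-exact cover $P$, pick $a\in S$ with multiplicity exactly $t-1$ and a multi-index $\alpha$ with $|\alpha|=t-1,\,\partial^\alpha P(a)\ne 0$; then $\partial^\alpha P$ has degree $\le\deg P-(t-1)$, vanishes on $\{0,1\}^N\setminus S$, and is nonzero at $a$, so the companion gives $\deg P\ge L+(t-1)$. The main obstacle is recovering the remaining $(t-1)$ to reach $L+2t-2$. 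The plan is to combine the inductive structure with the single-block boost: select $a_{<k}\in\prod_{i<k}S_i$ so that the slice $P(a_{<k},\mb{X}_k)$ is itself a $(t,t-1)$-exact polynomial cover of $\{0,1\}^{n_k}\setminus S_k$---the delicate point being that the $\mb{X}_k$-multiplicity of the restriction at a point of $S_k$ is a priori only $\ge t-1$, possibly strictly greater than the full multiplicity $t-1$ of $P$, so that ensuring exactness requires a generic-position argument over the choice of $a_{<k}$ inside $\prod_{i<k}S_i$---then apply Theorem~\ref{thm:multiplicity-symmetric} to lower-bound $\deg_{\mb{X}_k}P\ge\ol{\Lambda}_{n_k}(S_k)+2t-2$, and combine with a companion-style monomial extraction in $\mb{X}_{<k}$ contributing at least $L_{<k}$ to the total degree, yielding $\deg P\ge L+2t-2$.
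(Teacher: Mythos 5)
Your upper bound is correct and is essentially the paper's construction (Example~\ref{ex:multi-block-symm}): lift a $(1,0)$-exact cover of each $\{0,1\}^{n_i}\setminus S_i$ to $\mb{R}^N$ and append $\mc{H}^{\circ(t-1)}$. The lower bound, however, has a genuine gap, and you have correctly located it yourself: your companion lemma plus the derivative trick only yields $\deg P\ge L+(t-1)$, and the proposed repair for the missing $t-1$ does not go through as described. Even if you could choose $a_{<k}$ so that the slice $P(a_{<k},\mb{X}_k)$ is a genuine $(t,t-1)$-exact cover of $\{0,1\}^{n_k}\setminus S_k$ (itself delicate, since blockwise exactness of the slice is not guaranteed by exactness of $P$), the two degree contributions you want to add — $\deg_{\mb{X}_k}P\ge\ol{\Lambda}_{n_k}(S_k)+2t-2$ and a degree-$L_{<k}$ contribution ``in $\mb{X}_{<k}$'' — live a priori on different monomials of $P$, so they cannot simply be summed; this is exactly the point where an argument is missing. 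A secondary issue: your companion's base case invokes a ``nonzero somewhere'' form of Theorem~\ref{thm:venkitesh}(a), i.e., that \emph{every} nonzero multilinear polynomial vanishing on $\{0,1\}^{n}\setminus S_1$ has degree $\ge\ol{\Lambda}_{n_1}(S_1)$. Theorem~\ref{thm:venkitesh}(a) as stated only bounds polynomials that are nonzero at \emph{every} point of $S_1$; since $\Lambda_n$ arises as a maximum of per-point separation degrees (finite-degree Zariski closure), the per-point version is strictly stronger and needs its own proof.

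The paper's route avoids both problems and does not induct on blocks. Given a $(t,t-1)$-exact polynomial cover $P$ of $\{0,1\}^N\setminus S$, it multiplies $P$ by the explicit hyperplane product $\prod_{j=1}^k\mc{H}'_{W_{n_j}(S_j)\setminus\{\ol{\mu}_j\}}(\mb{X}_j)$, where $\ol{\mu}_j=\ol{\mu}_{n_j}(S_j)$. This product vanishes on every layer of $S$ except the $k$-wise layer $L$ at weights $(\ol{\mu}_1,\ldots,\ol{\mu}_k)$ and is nonvanishing on that layer, so the product polynomial $Q$ is a $(t,t-1)$-exact cover of $\{0,1\}^N\setminus L$. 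The known index-complexity lower bound (Theorem~\ref{thm:EPC-index}, which already carries the full $+2t-2$ via the Sauermann--Wigderson multiplicity machinery), combined with $r_N(L)=\sum_j\ol{\mu}_j$ (Corollary~\ref{cor:k-layer-index-complexity}), gives $\deg Q\ge N-\sum_j\ol{\mu}_j+2t-2$; subtracting the degree of the multiplier yields $\deg P\ge\sum_j\ol{\Lambda}_{n_j}(S_j)+2t-2$ in one shot. In short: the multiplicity boost you are missing is imported wholesale from Theorem~\ref{thm:EPC-index}, and the reduction to a single $k$-wise layer by multiplication is the key idea your proposal lacks.
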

\begin{remark}\label{rem:second-EHC-equivalent}
	The proof of Theorem~\ref{thm:multiplicity-block-symmetric} will, in fact, show that for any nonempty \(k\)-wise grid \(S=S_1\times\cdots\times S_k\subseteq\{0,1\}^N\) and \(t\ge1\), we have
	\begin{align*}
		\EHC_N^{(t,t-1)}(\{0,1\}^N\setminus S)&=\EHC_N^{(1,0)}(\{0,1\}^N\setminus S)+2t-2\\
		&=\sum_{i=1}^k\ol{\Lambda}_{n_i}(S_i)+2t-2\\
		&=\EPC_N^{(1,0)}(\{0,1\}^N\setminus S)+2t-2=\EPC_N^{(t,t-1)}(\{0,1\}^N\setminus S).
	\end{align*}
\end{remark}
\noindent A construction that implies the equality in Theorem~\ref{thm:multiplicity-block-symmetric} is a block extension of Example~\ref{ex:multi-symm}.
\begin{example}\label{ex:multi-block-symm}
	Let \(S=S_1\times\cdots\times S_k\subseteq\{0,1\}^N\) be a nonempty \(k\)-wise grid, and \(t\ge1\).  Then the collection of hyperplanes
	\[
	\bigg(\bigsqcup_{j=1}^k\big(\mc{H}^*_{\ol{\mu}_{n_j}(S_j)}(\mb{X}_j)\sqcup\mc{H}'_{W_{n_j}(\{0,1\}^{n_j}\setminus S_j)\setminus W_{n_j,\ol{\mu}_{n_j}(S_j)}}(\mb{X}_j)\big)\bigg)\sqcup\mc{H}^{\circ(t-1)}(\mb{X}_1)
	\]
	witnesses the equality in Theorem~\ref{thm:multiplicity-block-symmetric}.
\end{example}

\subsubsection*{A special case: covering subcubes and their complements}\label{subsubsec:covering-subcube}

Here we consider the special case of \(2\)-wise grids, where one of the blocks is a \emph{full} hypercube.  The results we mention here are immediate from previous results, and hence we simply mention them without repeating the proofs.

By a \tsf{subcube} of a hypercube \(\{0,1\}^n\), we mean a subset of the form \(\{0,1\}^I\times\{a\}\), where \(I\subseteq[n]\) and \(a\in\{0,1\}^{[n]\setminus I}\).  Since we are now concerned with polynomials with vanishing conditions on a subcube, without loss of generality, we will assume that the subcube is \(\mc{Q}_m\coloneqq\{0,1\}^m\times\{0^{n-m}\}\), for some \(m\in[0,n]\).  This is true since we can permute coordinates, as well as introduce translations of variables in any polynomial without changing the degree of the polynomial.  Further, we will assume that \(1\le m\le n-1\).  So \(\mc{Q}_m\) is a 2-wise grid, where we consider the product \(\{0,1\}^n=\{0,1\}^m\times\{0,1\}^{n-m}\).

\paragraph*{Covering complements of subcubes.}  As a consequence of Theorem~\ref{thm:multiplicity-block-symmetric}, we immediately get the following about covering complements of subcubes.
\begin{corollary}\label{cor:subcube-complement}
	For any \(1\le m\le n-1\) and \(t\ge1\), we have
	\[
	\EHC_n^{(t,t-1)}(\{0,1\}^n\setminus \mc{Q}_m)=\EPC_n^{(t,t-1)}(\{0,1\}^n\setminus \mc{Q}_m)=n-m+2t-2.
	\]
\end{corollary}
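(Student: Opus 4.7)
The plan is to apply Theorem~\ref{thm:multiplicity-block-symmetric} directly, recognising the subcube $\mc{Q}_m=\{0,1\}^m\times\{0^{n-m}\}$ as a $2$-wise grid $S_1\times S_2$ with $n_1=m$, $n_2=n-m$, $S_1=\{0,1\}^m$, and $S_2=\{0^{n-m}\}$. Both factors are nonempty symmetric sets: $S_1$ is the full hypercube (trivially closed under coordinate permutations), and $S_2$ is the single point $0^{n-m}$ (fixed by every permutation). Since $1\le m\le n-1$, the product $\mc{Q}_m$ is a proper nonempty subset of $\{0,1\}^n$, so the covering problem is nondegenerate.

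By Theorem~\ref{thm:multiplicity-block-symmetric}, it then suffices to evaluate $\ol{\Lambda}_m(S_1)+\ol{\Lambda}_{n-m}(S_2)$. For $S_1$, the complement $\{0,1\}^m\setminus S_1=\emptyset$ has empty weight set, so directly from the definitions we get $\mu_m(\emptyset)=0$ (the only $i\in[0,\lceil m/2\rceil]$ with $W_{m,i}\subseteq\emptyset$ is $i=0$) and $|W_m(\emptyset)|=0$, giving $\ol{\Lambda}_m(S_1)=0$. For $S_2$, the set is the layer of weight zero in $\{0,1\}^{n-m}$, so Proposition~\ref{pro:layer-index-complexity} immediately yields $\ol{\Lambda}_{n-m}(S_2)=\max\{0,n-m\}=n-m$. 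Summing and adding the $2t-2$ term from Theorem~\ref{thm:multiplicity-block-symmetric} produces exactly $n-m+2t-2$.

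There is no genuine obstacle here since the corollary is a direct specialization; the one step that deserves attention is the degenerate computation $\ol{\Lambda}_m(\{0,1\}^m)=0$, which guarantees that the full first block contributes nothing to the covering count. For completeness one can read off an extremal configuration from Example~\ref{ex:multi-block-symm}: the first block contributes no hyperplanes (both $\mc{H}^*_0(\mb{X}_1)$ and $\mc{H}'_\emptyset(\mb{X}_1)$ are empty), the second block contributes the $n-m$ parallel hyperplanes $\mc{H}'_{[1,n-m]}(\mb{X}_2)$ (whose union hits every point of $\{0,1\}^n\setminus\mc{Q}_m$, since such points have positive Hamming weight in the last $n-m$ coordinates), and $\mc{H}^{\circ(t-1)}(\mb{X}_1)$ adds $2(t-1)$ further hyperplanes to supply the required multiplicity $t$ at each point outside $\mc{Q}_m$.
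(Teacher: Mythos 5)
Your proposal is correct and is exactly the paper's route: the paper presents Corollary~\ref{cor:subcube-complement} as an immediate specialization of Theorem~\ref{thm:multiplicity-block-symmetric} to the $2$-wise grid $\mc{Q}_m=\{0,1\}^m\times\{0^{n-m}\}$, and your computations $\ol{\Lambda}_m(\{0,1\}^m)=0$ and $\ol{\Lambda}_{n-m}(\{0^{n-m}\})=n-m$ are the intended ones. Your extremal family is the specialization of Example~\ref{ex:multi-block-symm} (the paper's Example~\ref{ex:subcube-complement} writes the equivalent collection $\{X_{m+1}-1,\ldots,X_n-1\}\sqcup\mc{H}^{\circ(t-1)}(\mb{X})$), so nothing further is needed.
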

\noindent  In this case, Example~\ref{ex:multi-block-symm} simplifies to the following.
\begin{example}\label{ex:subcube-complement}
	Let \(1\le m\le n-1\) and \(t\ge1\).  Then the collection of hyperplanes
	\[
	\{X_{m+1}-1,\ldots,X_n-1\}\sqcup\mc{H}^{\circ(t-1)}(\mb{X})
	\]
	witnesses the equality in Corollary~\ref{cor:subcube-complement}.
\end{example}

  A variant of Corollary~\ref{cor:subcube-complement} can be obtained by considering arbitrary symmetric sets in the \emph{second block}.  For a symmetric set \(S\subseteq\{0,1\}^{n-m}\), denote \(\mc{Q}_m(S)=\{0,1\}^m\times S\).
\begin{corollary}\label{cor:subcube-symmetric}
	For \(1\le m\le n-1\), any nonempty symmetric set \(S\subseteq\{0,1\}^{n-m}\), and \(t\ge1\), we have
	\[
	\EHC_n^{(t,t-1)}(\{0,1\}^n\setminus \mc{Q}_m(S))=\EPC_n^{(t,t-1)}(\{0,1\}^n\setminus \mc{Q}_m(S))=\ol{\Lambda}_{n-m}(S)+2t-2.
	\]
\end{corollary}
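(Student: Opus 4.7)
The plan is to derive this statement as an immediate specialization of Theorem~\ref{thm:multiplicity-block-symmetric} with $k = 2$. First I would realize $\mc{Q}_m(S) = \{0,1\}^m \times S$ as a $2$-wise grid $S_1 \times S_2$, with $n_1 = m$, $n_2 = n-m$, $S_1 = \{0,1\}^m$ (trivially symmetric), and $S_2 = S$ (symmetric by hypothesis). Since $S$ is nonempty, so is $S_1 \times S_2$, so Theorem~\ref{thm:multiplicity-block-symmetric} applies and yields
\[
\EHC_n^{(t,t-1)}\bigl(\{0,1\}^n \setminus \mc{Q}_m(S)\bigr) = \EPC_n^{(t,t-1)}\bigl(\{0,1\}^n \setminus \mc{Q}_m(S)\bigr) = \ol{\Lambda}_m(\{0,1\}^m) + \ol{\Lambda}_{n-m}(S) + 2t - 2.
\]

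What then remains is purely bookkeeping: I would verify that $\ol{\Lambda}_m(\{0,1\}^m) = 0$, so the right-hand side collapses to $\ol{\Lambda}_{n-m}(S) + 2t - 2$. This is immediate from the definitions, because $\ol{\Lambda}_m(\{0,1\}^m) = \Lambda_m(\emptyset)$, and for the empty set $W_m(\emptyset) = \emptyset$ forces $|W_m(\emptyset)| = 0$, while the convention $W_{m,0} = \emptyset$ forces $\mu_m(\emptyset) = 0$. There is no genuine obstacle here; the only point requiring care is handling the degenerate first-block conventions correctly when invoking Theorem~\ref{thm:multiplicity-block-symmetric}.

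For transparency on the constructive side, the matching upper bound from Example~\ref{ex:multi-block-symm} simplifies cleanly in this degenerate case. Since $\ol{\mu}_m(\{0,1\}^m) = 0$ and $W_m(\{0,1\}^m \setminus \{0,1\}^m) = \emptyset$, both hyperplane families contributed by the first block, namely $\mc{H}^*_{\ol{\mu}_m(\{0,1\}^m)}(\mb{X}_1)$ and $\mc{H}'_{W_m(\{0,1\}^m \setminus \{0,1\}^m) \setminus W_{m,\ol{\mu}_m(\{0,1\}^m)}}(\mb{X}_1)$, are empty. What survives is exactly the symmetric-cover construction of Example~\ref{ex:multi-symm} applied to $S$ in the variables $\mb{X}_2 = (X_{m+1}, \ldots, X_n)$, together with the multiplicity pair $\mc{H}^{\circ(t-1)}(\mb{X}_1)$, and this collection witnesses the tight upper bound as claimed.
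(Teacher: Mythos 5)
Your proposal is correct and matches the paper's intent exactly: the paper presents this corollary as an immediate specialization of Theorem~\ref{thm:multiplicity-block-symmetric} to the $2$-wise grid $\{0,1\}^m\times S$, and your bookkeeping that $\ol{\Lambda}_m(\{0,1\}^m)=\Lambda_m(\emptyset)=0$ is the only verification needed. The simplification of Example~\ref{ex:multi-block-symm} you describe is likewise the paper's Example~\ref{ex:subcube-symmetric} (up to the immaterial choice of which block carries $\mc{H}^{\circ(t-1)}$).
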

\noindent  In this case, Example~\ref{ex:multi-block-symm} simplifies to the following.
\begin{example}\label{ex:subcube-symmetric}
	Let \(1\le m\le n-1\), \(S\subseteq\{0,1\}^{n-m}\) be a nonempty symmetric set, and \(t\ge1\).  Also denote \(\mb{X}=(\mb{X}',\mb{X}'')\) with \(\mb{X}'=(X_1,\ldots,X_m),\,\mb{X}''=(X_{m+1},\ldots,X_n)\).  Then the collection of hyperplanes
	\[
	\mc{H}^*_{\ol{\mu}_{n-m}(S)}(\mb{X}'')\sqcup\mc{H}'_{W_{n-m}(\{0,1\}^{n-m}\setminus S)\setminus W_{n-m,\ol{\mu}_{n-m}(S)}}(\mb{X}'')\sqcup\mc{H}^{\circ(t-1)}(\mb{X}'')
	\]
	witnesses the equality in Corollary~\ref{cor:subcube-symmetric}.
\end{example}

\paragraph*{Covering subcubes.}  It turns out that covering subcubes is easier than covering their complements.  In fact, we can even consider a more general case -- with an arbitrary symmetric set in the second block, as in Corollary~\ref{cor:subcube-symmetric}, as well as more general multiplicities.  We give a quick proof here.
\begin{proposition}\label{pro:2-wise-subcube}
	For \(1\le m\le n-1\), any symmetric set \(S\subseteq\{0,1\}^{n-m}\), and \(t\ge1,\,\ell\in[0,t-1]\), we have
	\[
	\EHC_n^{(t,\ell)}(\{0,1\}^m\times S)=\EHC_{n-m}^{(t,\ell)}(S).
	\]
	In particular, for any nomempty symmetric set \(S\subseteq\{0,1\}^{n-m}\) and \(t\ge1\), we have
	\[
	\EHC_n^{(t,t-1)}(\{0,1\}^m\times S)=\EHC_{n-m}^{(t,t-1)}(S)=\Lambda_{n-m}(S)+2t-2.
	\]
\end{proposition}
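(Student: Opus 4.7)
The plan is to establish both inequalities in \(\EHC_n^{(t,\ell)}(\{0,1\}^m\times S)=\EHC_{n-m}^{(t,\ell)}(S)\), and then deduce the ``in particular'' statement from Theorem~\ref{thm:multiplicity-symmetric}. For the easy direction \(\le\), I would take an optimal \((t,\ell)\)-exact hyperplane cover \(\mc{F}\) for \(S\) in \(\{0,1\}^{n-m}\) and lift each hyperplane \(H(\mb{X}'')\in\mc{F}\) to a hyperplane in \(\{0,1\}^n\) by viewing its defining polynomial as an element of \(\mb{R}[\mb{X}',\mb{X}'']\) independent of \(\mb{X}'\).  Since \(H(a',b)=H(b)\) for every \((a',b)\in\{0,1\}^n\), each point \((a',b)\) is covered in the lifted multiset with the same multiplicity as \(b\) was in \(\mc{F}\), so the lift is a valid \((t,\ell)\)-exact cover of the same size.

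For the reverse direction, I start with an optimal \((t,\ell)\)-exact cover \(\mc{G}\) of \(\{0,1\}^m\times S\) with \(|\mc{G}|=N\), and partition \(\mc{G}=\mc{G}_1\sqcup\mc{G}_2\), where \(\mc{G}_1\) consists of the hyperplanes whose defining polynomial depends only on \(\mb{X}'\), and \(\mc{G}_2\) of the rest.  For each \(a'\in\{0,1\}^m\), define \(k_{a'}:=|\{H\in\mc{G}_1:H(a')=0\}|\) and \(\mc{G}_2|_{a'}:=\{H(a',\mb{X}''):H\in\mc{G}_2\}\); since the substitution \(\mb{X}'=a'\) does not affect the coefficients of \(\mb{X}''\) in the linear part, each element of \(\mc{G}_2|_{a'}\) is a genuine (non-constant) affine linear polynomial in \(\mb{X}''\), i.e., a hyperplane in \(\{0,1\}^{n-m}\).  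The key identity
\[
\#\{H\in\mc{G}:H(a',b)=0\} \;=\; k_{a'}+\#\{H'\in\mc{G}_2|_{a'}:H'(b)=0\},\quad b\in\{0,1\}^{n-m},
\]
then shows that \(\mc{G}_2|_{a'}\) is a \((t-k_{a'},\,\ell-k_{a'})\)-exact cover of \(S\) (the bound \(k_{a'}\le\ell\) is automatic on inspecting the count at \((a',b)\) for any \(b\notin S\), assuming \(S\subsetneq\{0,1\}^{n-m}\); the edge case \(S=\{0,1\}^{n-m}\) is handled by a direct computation showing both sides equal \(2t\)).  Augmenting any \((t-k_{a'},\,\ell-k_{a'})\)-exact cover of \(S\) by \(k_{a'}\) copies of the pair \(\{X_{m+1}=0,\,X_{m+1}=1\}\) (each pair contributes \(+1\) to every multiplicity) yields a \((t,\ell)\)-exact cover, so
\[
\EHC_{n-m}^{(t,\ell)}(S)\;\le\;|\mc{G}_2|+2k_{a'}\;=\;N-|\mc{G}_1|+2k_{a'}.
\]

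The remaining task is to find \(a'\in\{0,1\}^m\) with \(2k_{a'}\le|\mc{G}_1|\), which I plan to do by averaging.  For any \(H\in\mc{G}_1\), the linear part of \(H\) in \(\mb{X}'\) is nonzero (else \(H\) is a nonzero constant, not a hyperplane), so some coordinate of \(\mb{X}'\) is uniquely determined by the others on \(\{H=0\}\), giving \(|\{a'\in\{0,1\}^m:H(a')=0\}|\le 2^{m-1}\).  A double counting argument then yields \(\sum_{a'\in\{0,1\}^m}k_{a'}\le|\mc{G}_1|\cdot 2^{m-1}\), so some \(a'\) satisfies \(k_{a'}\le|\mc{G}_1|/2\); plugging this into the inequality above gives \(\EHC_{n-m}^{(t,\ell)}(S)\le N\), as needed.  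The ``in particular'' part is then immediate upon applying Theorem~\ref{thm:multiplicity-symmetric} to the nonempty symmetric set \(\{0,1\}^{n-m}\setminus S\), which yields \(\EHC_{n-m}^{(t,t-1)}(S)=\ol{\Lambda}_{n-m}(\{0,1\}^{n-m}\setminus S)+2t-2=\Lambda_{n-m}(S)+2t-2\).  The main obstacle is precisely the presence of hyperplanes in \(\mc{G}_1\), which inflate multiplicities uniformly across the \(\mb{X}''\)-fibers; the averaging trick exactly matches this inflation against the extra cost of \(2k_{a'}\) incurred in the compensation step.
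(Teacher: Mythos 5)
Your proposal is correct, and your upper-bound direction (lifting an optimal cover of \(S\) to \(\{0,1\}^n\) by treating its hyperplanes as independent of \(\mb{X}'\)) is exactly the paper's. For the lower bound you take a genuinely different, and in fact more careful, route. The paper simply restricts an optimal cover \(\mc{H}(\mb{X})\) of \(\{0,1\}^m\times S\) to the fiber \(\mb{X}'=0^m\) and asserts that the restricted family is a \((t,\ell)\)-exact cover of \(S\); this glosses over hyperplanes that depend only on \(\mb{X}'\), whose restrictions are constants (possibly the zero polynomial) rather than hyperplanes of \(\mb{R}^{n-m}\) --- for instance \(\{X_1,\,X_2,\,X_1-1\}\) is an optimal \((2,1)\)-exact cover of \(\{0,1\}\times\{0\}\subseteq\{0,1\}^2\) whose restriction to \(X_1=0\) is \(\{0,\,X_2,\,-1\}\). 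Your decomposition \(\mc{G}=\mc{G}_1\sqcup\mc{G}_2\), the identification of \(\mc{G}_2|_{a'}\) as a \((t-k_{a'},\ell-k_{a'})\)-exact cover (with \(k_{a'}\le\ell\) read off from any uncovered fiber point), the compensation by \(k_{a'}\) copies of the pair \(\{X_{m+1}=0,\,X_{m+1}=1\}\), and the averaging step producing some \(a'\) with \(2k_{a'}\le|\mc{G}_1|\) (valid since a nonconstant affine linear form vanishes on at most \(2^{m-1}\) points of \(\{0,1\}^m\)) together handle this degenerate case and yield \(\EHC_{n-m}^{(t,\ell)}(S)\le N\) in full generality, at the cost of a longer argument. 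The only residual caveats are shared with the paper: the extreme case \(S=\{0,1\}^{n-m}\) falls outside the definition of exact covers (which requires a proper subset of the cube), and the final application of Theorem~\ref{thm:multiplicity-symmetric} needs \(\{0,1\}^{n-m}\setminus S\) nonempty, i.e.\ \(S\) proper as well as nonempty.
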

\begin{proof}
	Denote the indeterminates \(\mb{X}=(\mb{X}',\mb{X}'')\) with \(\mb{X}'=(X_1,\ldots,X_m),\,\mb{X}''=(X_{m+1},\ldots,X_n)\).  Let \(\mc{H}(\mb{X})=\{h_1(\mb{X}),\ldots,h_q(\mb{X})\}\) be a \((t,\ell)\)-exact hyperplane cover for \(\{0,1\}^m\times S\) with \(q=|\mc{H}|=\EHC_n^{(t,\ell)}(\{0,1\}^m\times S)\).  Now let \(\mc{H}''(\mb{X}'')=\mc{H}(0^m,\mb{X}'')=\{h_1(0^m,\mb{X}''),\ldots,h_q(0^m,\mb{X}'')\}\).  Then it is immediate that \(\mc{H}''(\mb{X}'')\) is a \((t,\ell)\)-exact hyperplane cover for \(S\subsetneq\{0,1\}^{n-m}\).  This implies \(\EHC_n^{(t,\ell)}(\{0,1\}^m\times S)\ge\EHC_{n-m}^{(t,\ell)}(S)\).
	
	Conversely, let \(\mc{H}(\mb{X}'')=\{h_1(\mb{X}''),\ldots,h_q(\mb{X}'')\}\) be a \((t,\ell)\)-exact hyperplane cover for \(S\subsetneq\{0,1\}^{n-m}\) with \(q=|\mc{H}(\mb{X}'')|=\EHC_{n-m}^{(t,\ell)}(S)\).  Then again, it is immediate that \(\ol{\mc{H}}(\mb{X}',\mb{X}'')\coloneqq\mc{H}(\mb{X}'')\) is a \((t,\ell)\)-exact hyperplane cover for \(\{0,1\}^m\times S\).  This implies \(\EHC_n^{(t,\ell)}(\{0,1\}^m\times S)\le\EHC_{n-m}^{(t,\ell)}(S)\).  Thus, we have proved the first identity.
	
	The second identity then follows immediately from Theorem~\ref{thm:multiplicity-symmetric}.
\end{proof}

\subsection{Our results: higher multiplicity polynomial covers}\label{subsec:results-polynomial}

Let us now look at a few instances where we can solve the polynomial covering problem in broader generality, but not the hyperplane covering problem.  In fact, in this extended setting, we will also need some \emph{nondegeneracy conditions} to obtain a clean combinatorial characterization.

Consider the hypercube \(\{0,1\}^N=\{0,1\}^{n_1}\times\cdots\times\{0,1\}^{n_k}\).  Recall that we will now work with the indeterminates \(\mb{X}=(\mb{X}_1,\ldots,\mb{X}_k)\), where \(\mb{X}_j=(X_{j,1},\ldots,X_{j,n_j})\) are the indeterminates for the \(j\)-th block.  Let \(t\ge1,\,\ell\in[0,t-1]\), and consider any subset \(S\subseteq\{0,1\}^N\).  We define
\begin{itemize}
	\item  a \tsf{\((t,\ell)\)-block exact hyperplane cover} for \(S\) to be a \((t,\ell)\)-exact hyperplane cover \(\mc{H}(\mb{X})\) (in \(\mb{R}^N\)) for \(S\) such that
	\[
	|\mc{H}(a,\mb{X}_j)|=|\mc{H}(\mb{X})|,\quad\tx{for every }a\in\{0,1\}^{n_1}\times\cdots\times\{0,1\}^{n_{j-1}}\times\{0,1\}^{n_{j+1}}\times\cdots\times\{0,1\}^{n_k},\,j\in[k].
	\]
	
	\item  a \tsf{\((t,\ell)\)-block exact polynomial cover} for \(S\) to be a nonzero polynomial \(P(\mb{X})\in\mb{R}[\mb{X}]\) such that
	\begin{enumerate}[(a)]
		\item  the polynomial \(P(\mb{X})\) vanishes at each point in \(S\) with multiplicity at least \(t\),
		\item  for each \(j\in[k]\), and every point \((a,\wt{a})\in\{0,1\}^N\setminus S\) with \(a\in\{0,1\}^{n_1}\times\cdots\times\{0,1\}^{n_{j-1}}\times\{0,1\}^{n_{j+1}}\times\cdots\times\{0,1\}^{n_k},\,\wt{a}\in\{0,1\}^{n_j}\), the polynomial \(P(a,\mb{X}_j)\) vanishes at \(\wt{a}\) with multiplicity exactly \(\ell\).
	\end{enumerate}
\end{itemize}
Let \(\bEHC_{(n_1,\ldots,n_k)}^{(t,\ell)}(S)\) denote the minimum size of a \((t,\ell)\)-block exact hyperplane cover for \(S\), and let \(\bEPC_{(n_1,\ldots,n_k)}^{(t,\ell)}(S)\) denote the minimum degree of a \((t,\ell)\)-block exact polynomial cover for \(S\).  It is obvious from the definitions that, in general, we have \(\bEHC_{(n_1,\ldots,n_k)}^{(t,\ell)}(S)\ge\bEPC_{(n_1,\ldots,n_k)}^{(t,\ell)}(S)\).  For completeness, we give a quick proof in Appendix~\ref{app:bEHC-bEPC}.  Further, it is trivial from the definitions that \(\bEHC_{(n_1,\ldots,n_k)}^{(t,\ell)}(S)\ge\EHC_N^{(t,\ell)}(S)\) and \(\bEPC_{(n_1,\ldots,n_k)}^{(t,\ell)}(S)\ge\EPC_N^{(t,\ell)}(S)\).  A blockwise variant of Question~\ref{ques:main} that we will consider is the following.
\begin{question}\label{ques:block-main}
	Given a proper subset \(S\subsetneq\{0,1\}^N\) and integers \(t\ge1,\,\ell\in[0,t-1]\), under what conditions can we say that \(\bEHC_{(n_1,\ldots,n_k)}^{(t,\ell)}(S)=\bEPC_{(n_1,\ldots,n_k)}^{(t,\ell)}(S)\)?
\end{question}
\noindent Unfortunately, we are unable to answer Question~\ref{ques:block-main} in the generality that we consider; in fact, we suspect that the answer could be negative.  Instead, we can solve simply the blockwise polynomial covering problem.
\begin{question}\label{ques:block-poly}
	Given a proper subset \(S\subsetneq\{0,1\}^N\) and integers \(t\ge1,\,\ell\in[0,t-1]\), under what conditions can we (combinatorially) characterize \(\bEPC_{(n_1,\ldots,n_k)}^{(t,\ell)}(S)\)?
\end{question}

\subsubsection{Covering \emph{pseudo downward closed} (PDC) \(k\)-wise symmetric sets}\label{subsubsec:covering-PDC-k-symmetric}

Our proof technique extends further to a more general class of \(k\)-wise symmetric sets to give a characterization for the blockwise polynomial covering problem, that is, we answer Question~\ref{ques:block-poly}.  In fact, the tight polynomial construction for this characterization hints that in this generality, the answers to Question~\ref{ques:main} and Question~\ref{ques:block-main} could be negative.

Consider the two obvious total orders \(\le\) and \(\le'\) on \(\mb{N}\) defined by
\[
0<1<2<3<\cdots\quad\tx{and}\quad0>'1>'2>'3>'\cdots
\]
Let \(\ms{T}=\{\le,\le'\}\).  For any \(S\subseteq\{0,1\}^N\) and \(j\in[k]\), let \(S_j\subseteq\{0,1\}^{n_j}\) denote the \tsf{projection} of \(S\) onto the \(j\)-th block.  Consider any \(k\)-wise symmetric set \(S\subseteq\{0,1\}^N\).  It is immediate that each \(S_j\) is symmetric, \(S_1\times\cdots\times S_k\) is a \(k\)-wise grid, and \(S\subseteq S_1\times\cdots\times S_k\).  Further, denote
\[
W_{(n_1,\ldots,n_k)}(S)=\{(|x_1|,\ldots,|x_k|):(x_1,\ldots,x_k)\in S\}.
\]
Then clearly, \(W_{(n_1,\ldots,n_k)}(S)\subseteq W_{n_1}(S_1)\times\cdots\times W_{n_k}(S_k)\).  For each \(j\in[k]\), we consider an arbitrarily chosen total order \(\le_j\,\in\ms{T}\) on \(W_{n_j}(S_j)\), say denoted by \(W_{n_j}(S_j)=\{w_{j,0}<_j\cdots<_jw_{j,q_j}\}\), and further for each \(z_j\in[0,q_j]\), define the symmetric set \([S]_{j,z_j}\subseteq\{0,1\}^{n_j}\) by \(W_n([S]_{j,z_j})=\{w_{j,0}<_j\cdots<_jw_{j,z_j}\}\).

We define a \(k\)-symmetric set \(S\subseteq\{0,1\}^N\) to be \tsf{pseudo downward closed (PDC)} if for every \((w_{1,z_1},\ldots,w_{k,z_k})\in W_{(n_1,\ldots,n_k)}(S)\) we have \(W_{n_1}([S]_{1,z_1})\times\cdots\times W_{n_k}([S]_{k,z_k})\subseteq W_{(n_1,\ldots,n_k)}(S)\), that is, \(W_{(n_1,\ldots,n_k)}(S)\) is \emph{downward closed}\footnote{For any poset \((P,\le)\), a subset \(D\subseteq P\) is downward closed if for any \(x\in D\) we have \(y\in D\) for all \(y\in P,\,y\le x\).} in \(W_{n_1}(S_1)\times\cdots\times W_{n_k}(S_k)\) under the partial order induced by \(\le_1,\ldots,\le_k\).    Further, let
\[
\mc{N}(S)=\{(z_1,\ldots,z_k)\in\mb{N}^k:(w_{1,z_1},\ldots,w_{k,z_k})\in W_{(n_1,\ldots,n_k)}(S)\}.
\]
It is clear that \(\mc{N}(S)\) is downward closed in \(\mb{N}^k\).  Also let \(\outext(S)\) denote the set of all minimal elements of the complement set \(\mb{N}^k\setminus \mc{N}(S)\) with respect to the natural partial order on \(\mb{N}^k\).

It is quite easy to check that the complement of a PDC \(k\)-symmetric set is again PDC \(k\)-symmetric.  We defer the proof to the Appendix.  Our third main result generalizes Theorem~\ref{thm:multiplicity-block-symmetric}, but solves only the block polynomial covering problem, that is, answers Question~\ref{ques:block-poly}.  Note that in this generality, the combinatorial characterization that we have is nicer to describe in terms of complements.\footnote{This is why, for consistency, we have retained the description in terms of complements throughout this work.}
\begin{theorem}\label{thm:EPC-PDC}
	For any nonempty PDC \(k\)-wise symmetric set \(S\subseteq\{0,1\}^N\) and \(t\ge1\), we have
	\[
	\bEPC_{(n_1,\ldots,n_k)}^{(t,t-1)}(\{0,1\}^N\setminus S)=\max_{(z_1,\ldots,z_k)\in\outext(S)}\bigg\{\sum_{j\in[k]:z_j\ge1} \ol{\Lambda}_{n_j}([S]_{j,z_j-1})\bigg\}+2t-2.
	\]
\end{theorem}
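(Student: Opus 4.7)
The plan is to establish matching lower and upper bounds for the claimed expression, mirroring the two-sided polynomial-method strategy underlying Theorems~\ref{thm:multiplicity-symmetric} and~\ref{thm:multiplicity-block-symmetric}.

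For the lower bound, fix an arbitrary $z = (z_1, \ldots, z_k) \in \outext(S)$ with $J(z) = \{j \in [k] : z_j \ge 1\}$, and let $P$ be any $(t, t-1)$-block exact polynomial cover for $\{0, 1\}^N \setminus S$. The goal is to show $\deg P \ge \sum_{j \in J(z)} \ol{\Lambda}_{n_j}([S]_{j, z_j - 1}) + 2t - 2$; maximising over $z$ then yields the claimed inequality. The plan is to reduce to the already-handled $|J(z)|$-wise grid case: for each $i \in [k] \setminus J(z)$ choose a point $a_i \in \{0, 1\}^{n_i}$ with $|a_i| = w_{i, 0}$, and consider the restriction $\wt{P}(\mb{X}_{J(z)}) \coloneqq P((a_i)_{i \notin J(z)}, \mb{X}_{J(z)})$. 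The key claim is that $\wt{P}$ is a $(t, t-1)$-block exact polynomial cover of the reduced $|J(z)|$-wise grid $\prod_{j \in J(z)} [S]_{j, z_j - 1}$ inside $\{0, 1\}^{\sum_{j \in J(z)} n_j}$; once this is granted, the blockwise analogue of Theorem~\ref{thm:multiplicity-block-symmetric} applied to $\wt{P}$ yields $\deg \wt{P} \ge \sum_{j \in J(z)}\ol{\Lambda}_{n_j}([S]_{j, z_j - 1}) + 2t - 2$, and the lower bound follows from $\deg P \ge \deg \wt{P}$.

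For the upper bound, let $z^* \in \outext(S)$ achieve the maximum. We construct a polynomial of the claimed degree that is a $(t, t-1)$-block exact polynomial cover of $\{0, 1\}^N \setminus S$, by adapting Example~\ref{ex:multi-block-symm}. For each $j \in J(z^*)$, the hyperplane families $\mc{H}^*_{\ol{\mu}_{n_j}([S]_{j, z^*_j - 1})}(\mb{X}_j)$ and $\mc{H}'_{W_{n_j}(\{0, 1\}^{n_j} \setminus [S]_{j, z^*_j - 1}) \setminus W_{n_j, \ol{\mu}_{n_j}([S]_{j, z^*_j - 1})}}(\mb{X}_j)$ jointly realize (by Lemma~\ref{lem:T} and Theorem~\ref{thm:venkitesh}) a hyperplane cover of $\{0, 1\}^{n_j} \setminus [S]_{j, z^*_j - 1}$ of size $\ol{\Lambda}_{n_j}([S]_{j, z^*_j - 1})$. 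Taking the disjoint union of these families over $j \in J(z^*)$, together with the multiplicity factor $\mc{H}^{\circ(t-1)}(\mb{X}_1)$ of size $2(t-1)$, gives a multiset of hyperplanes whose product polynomial is the desired cover; the PDC hypothesis is invoked to verify that this product vanishes on every point of $\{0, 1\}^N \setminus S$ with the correct multiplicity, while being nonzero on $S$ as required by the block-exactness condition.

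The main obstacle is the lower bound's key claim, namely that the restricted polynomial $\wt{P}$ is indeed a $(t, t-1)$-block exact polynomial cover for the reduced grid. This reduces to two weight-profile verifications that rely on the downward-closed structure of $\mc{N}(S)$ and the minimality of $z$ in $\outext(S)$: first, that every point $b \in \prod_{j \in J(z)} [S]_{j, z_j - 1}$ lifts to a point $((a_i)_{i \notin J(z)}, b) \in S$, so that $\wt{P}$ inherits full multiplicity $\ge t$ there; and second, that for each $j \in J(z)$ and each point in the complement of the reduced grid, the block-$j$ multiplicity condition on $P$ transfers to exact block-$j$ multiplicity $t - 1$ for $\wt{P}$. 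A secondary challenge, on the upper-bound side, is that the proposed construction uses only the single maximizer $z^* \in \outext(S)$; verifying that the resulting product polynomial nevertheless covers every point of $\{0, 1\}^N \setminus S$---including those whose weight profile is not directly accessible from $z^*$---will require invoking the PDC property in a nontrivial way.
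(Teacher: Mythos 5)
Your proposal diverges from the paper's proof in both directions, and in both places the divergence opens a genuine gap.

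\textbf{Upper bound.} Building the cover from the single maximizer $z^*\in\outext(S)$ cannot work once $|\outext(S)|>1$ (which is the generic situation). The product of hyperplanes you propose is nonzero on a cube point $x$ exactly when $x\in\big(\prod_{j\in J(z^*)}[S]_{j,z^*_j-1}\big)\times\big(\prod_{i\notin J(z^*)}\{0,1\}^{n_i}\big)$, and this set neither contains $S$ nor is contained in it: a point of $S$ whose index profile lies in $\mc{N}(S)$ but exceeds $z^*_j$ in some coordinate $j\in J(z^*)$ (for instance a point sitting under a different element of $\outext(S)$) is hit by at least one extra hyperplane, breaking the exact multiplicity $t-1$ required on $S$; conversely a point outside $S$ whose block-$J(z^*)$ profile is small but whose profile in a block with $z^*_i=0$ pushes it out of $\mc{N}(S)$ receives only $t-1$ hits and is not covered. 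The PDC hypothesis does not rescue a single-corner product. The paper's Example~\ref{ex:EPC-PDC} instead forms a linear combination of the product polynomials over \emph{all} of $\outext(S)$, with coefficients chosen $\wh{\mb{Q}}$-linearly independent so that the sum vanishes at a cube point precisely when every summand does; the zero set is then the intersection over $\outext(S)$, which equals $\{0,1\}^N\setminus S$. This is exactly why the theorem is a polynomial-cover statement and not a hyperplane-cover statement: the tight witness is not a product of linear forms.

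\textbf{Lower bound.} Your key claim fails for the same structural reason. If $\mc{N}(S)$ strictly contains the box $\prod_{j}[0,z_j-1]$, then there are points $b$ outside the reduced grid $\prod_{j\in J(z)}[S]_{j,z_j-1}$ whose lift $((a_i)_{i\notin J(z)},b)$ still lies in $S$ (its index profile is $\ge z_j$ in some coordinate yet remains in $\mc{N}(S)$). At such $b$ the cover $P$, hence $\wt{P}$, is only guaranteed blockwise multiplicity exactly $t-1$, not full multiplicity $\ge t$, so $\wt{P}$ is not a block exact cover of the complement of the reduced grid and Theorem~\ref{thm:multiplicity-block-symmetric} cannot be invoked. (Trying to apply the present theorem to the larger reduced PDC set one actually obtains would be circular.) The paper does not restrict at all: for each fixed $z\in\outext(S)$ it multiplies $P$ by $\prod_{j}\mc{H}'_{W_{n_j}([S]_{j,z_j-1})\setminus\{\ol{\mu}_j\}}(\mb{X}_j)$, which raises the vanishing order at the intermediate weight profiles, argues that the resulting $Q$ is a $(t,t-1)$-exact polynomial cover for the complement of the single $k$-wise layer with weights $(\ol{\mu}_1,\ldots,\ol{\mu}_k)$, and then applies the index-complexity lower bound (Theorem~\ref{thm:EPC-index} combined with Corollary~\ref{cor:k-layer-index-complexity}) before doing the degree bookkeeping. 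You would need to adopt this multiply-and-reduce-to-a-layer step, or something equivalent, for the lower bound to close.
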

\begin{remark}\label{rem:PDC-equivalent}
	The proof of Theorem~\ref{thm:EPC-PDC} will also show that for any nonempty PDC \(k\)-wise symmetric set \(S\subseteq\{0,1\}^N\) and \(t\ge1\), we have
	\begin{align*}
		\bEPC_{(n_1,\ldots,n_k)}^{(t,t-1)}(\{0,1\}^N\setminus S)&=\bEPC_{(n_1,\ldots,n_k)}^{(1,0)}(\{0,1\}^N\setminus S)+2t-2\\
		&=\max_{(z_1,\ldots,z_k)\in\outext(S)}\bigg\{\sum_{i\in[k]:z_i\ge1}\ol{\Lambda}_{n_j}([S]_{j,z_j-1})\bigg\}+2t-2.
	\end{align*}
\end{remark}
\noindent A construction that implies the equality in Theorem~\ref{thm:EPC-PDC} can be adapted from Example~\ref{ex:multi-block-symm} as follows.
\begin{example}\label{ex:EPC-PDC}
	For any fundamental family of hyperplanes \(\mc{H}(\mb{X})=\{H_1(\mb{X}),\ldots,H_p(\mb{X})\}\) defined in Section~\ref{subsubsec:hyperplane-construction}, let us abuse notation and also denote the corresponding product polynomial by \(\mc{H}(\mb{X})=H_1(\mb{X})\cdots H_p(\mb{X})\).  Let \(S\subseteq\{0,1\}^N\) be a nonempty PDC \(k\)-wise symmetric set, and \(t\ge1\).  Assuming notations as in Example~\ref{ex:multi-block-symm}, for each \((z_1,\ldots,z_k)\in\outext(S)\), define
	\[
	\mc{H}_{S,(z_1,\ldots,z_k)}(\mb{X})=\prod_{j\in[k]:z_j\ge1}\Big(\mc{H}^*_{\ol{\mu}_{n_j}([S]_{j,z_j-1})}(\mb{X}_j)\cdot\mc{H}'_{W_{n_j}(\{0,1\}^{n_j}\setminus[S]_{j,z_j-1})\setminus W_{n_j,\ol{\mu}_{n_j}([S]_{j,z_j-1})}}(\mb{X}_j)\Big).
	\]
	Now consider a subfield of \(\mb{R}\) defined by \(\wh{\mb{Q}}=\mb{Q}\big(\mc{H}_{S,(z_1,\ldots,z_k)}(b):b\in\{0,1\}^N,\,(z_1,\ldots,z_k)\in\outext(S)\big)\).  It follows that \(\mb{R}\) is an infinite dimensional \(\wh{\mb{Q}}\)-vector space.  Choose any \(\wh{\mb{Q}}\)-linearly independent subset \(\{\lambda_{S,(z_1,\ldots,z_k)}:(z_1,\ldots,z_k)\in\outext(S)\}\subseteq\mb{R}\).  Then the polynomial
	\[
	\bigg(\sum_{(z_1,\ldots,z_k)\in\outext(S)}\lambda_{S,(z_1,\ldots,z_k)}\mc{H}_{S,(z_1,\ldots,z_k)}(\mb{X})\bigg)\cdot\mc{H}^{\circ(t-1)}(\mb{X}_1)
	\]
	witnesses the equality in Theorem~\ref{thm:EPC-PDC}.
\end{example}

\subsubsection*{Covering \(k\)-wise grids}\label{subsubsec:covering-k-grids}

Note that both \(k\)-wise grids and their complements are special cases of PDC \(k\)-wise symmetric sets.  So our first two main results (Theorem~\ref{thm:multiplicity-symmetric}, and Theorem~\ref{thm:multiplicity-block-symmetric} via Corollary~\ref{cor:grid-equivalence}) are, in fact, corollaries of our third main result (Theorem~\ref{thm:EPC-PDC}).  Further, the tight example of a polynomial cover mentioned in Example~\ref{ex:EPC-PDC} specializes to the tight examples of hyperplane covers mentioned in Example~\ref{ex:multi-symm} and Example~\ref{ex:multi-block-symm}.  

Theorem~\ref{thm:multiplicity-block-symmetric} characterizes the hyperplane and polynomial covering problems for complements of \(k\)-wise grids.  In this case, appealing to Theorem~\ref{thm:EPC-PDC}, we get the following, we see that the blockwise variants of our covering problems are equivalent to the usual \emph{non-blockwise} covering problems.
\begin{corollary}\label{cor:grid-equivalence}
	For any nonempty \(k\)-wise grid \(S=S_1\times\cdots\times S_k\subseteq\{0,1\}^N\) and \(t\ge1\), we have
	\begin{align*}
	\bEPC_{(n_1,\ldots,n_k)}^{(t,t-1)}(\{0,1\}^N\setminus S)&=\EPC_N^{(t,t-1)}(\{0,1\}^N\setminus S)\\
	&=\sum_{j=1}^k\ol{\Lambda}_{n_j}(S_j)+2t-2\\
	&=\EHC_N^{(t,t-1)}(\{0,1\}^N\setminus S)=\bEHC_{(n_1,\ldots,n_k)}^{(t,t-1)}(\{0,1\}^N\setminus S).
	\end{align*}
\end{corollary}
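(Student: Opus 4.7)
The plan is to combine the blockwise polynomial bound from Theorem \ref{thm:EPC-PDC} with the non-blockwise characterization from Theorem \ref{thm:multiplicity-block-symmetric}, and pair this with a block-exact hyperplane construction to simultaneously pin down all five quantities.

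I would first observe that every \(k\)-wise grid \(S = S_1 \times \cdots \times S_k\) is a PDC \(k\)-wise symmetric set: since the joint weight set \(W_{(n_1,\ldots,n_k)}(S) = W_{n_1}(S_1) \times \cdots \times W_{n_k}(S_k)\) coincides with the full product, downward closure holds automatically, independent of the choice of total orders on each \(W_{n_j}(S_j)\). Therefore Theorem \ref{thm:EPC-PDC} applies directly to \(S\), and the task reduces to unpacking the formula there. For a \(k\)-wise grid, \(\mc{N}(S)\) is the full axis-parallel box \([0, q_1] \times \cdots \times [0, q_k]\) with \(q_j = |W_{n_j}(S_j)| - 1\); the element of \(\outext(S)\) that attains the maximum has every coordinate \(z_j \geq 1\), and for such a coordinate \([S]_{j, z_j-1} = S_j\). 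So the bracketed sum collapses to \(\sum_{j=1}^{k} \ol{\Lambda}_{n_j}(S_j)\), giving
\[
\bEPC_{(n_1,\ldots,n_k)}^{(t,t-1)}(\{0,1\}^N\setminus S) = \sum_{j=1}^{k} \ol{\Lambda}_{n_j}(S_j) + 2t - 2.
\]

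Next I would invoke Theorem \ref{thm:multiplicity-block-symmetric}, which already gives \(\EHC_N^{(t,t-1)}(\{0,1\}^N\setminus S) = \EPC_N^{(t,t-1)}(\{0,1\}^N\setminus S) = \sum_{j=1}^{k} \ol{\Lambda}_{n_j}(S_j) + 2t - 2\). The universal chain \(\bEHC \geq \bEPC \geq \EPC\) and \(\bEHC \geq \EHC \geq \EPC\) then forces \(\bEHC \geq \sum_{j=1}^{k} \ol{\Lambda}_{n_j}(S_j) + 2t - 2\), so it only remains to exhibit a block-exact hyperplane cover of this size. The construction of Example \ref{ex:multi-block-symm} does the job: each of its hyperplanes depends on only one of the blocks \(\mb{X}_j\), so after any substitution \(a\) in the other blocks no hyperplane of the cover is lost from the multiset \(\mc{H}(a, \mb{X}_j)\), meeting the block-exactness cardinality condition. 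Chaining all the inequalities yields the claimed five-way equality.

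The main subtlety lies in the first paragraph, namely pinning down which element of \(\outext(S)\) contributes the maximum in Theorem \ref{thm:EPC-PDC} for the grid case and verifying that its contribution evaluates to the full sum \(\sum_j \ol{\Lambda}_{n_j}(S_j)\) via the identification \([S]_{j, q_j} = S_j\) in every coordinate. Once this is unwound, confirming block-exactness of the Example \ref{ex:multi-block-symm} construction is straightforward because no hyperplane in that construction mixes two distinct blocks.
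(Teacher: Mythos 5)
Your overall architecture --- grids are PDC, apply Theorem~\ref{thm:EPC-PDC}, and close the chain with the block-exact hyperplane construction of Example~\ref{ex:multi-block-symm} --- is the paper's intended route. But your second paragraph is circular within the paper's logical structure: you invoke Theorem~\ref{thm:multiplicity-block-symmetric} to supply the non-blockwise equalities \(\EHC_N=\EPC_N=\sum_j\ol{\Lambda}_{n_j}(S_j)+2t-2\), whereas the paper derives Theorem~\ref{thm:multiplicity-block-symmetric} \emph{from} this corollary (``Theorem~\ref{thm:multiplicity-block-symmetric} via Corollary~\ref{cor:grid-equivalence}''), and Section~\ref{sec:covering-PDC} proves only Theorem~\ref{thm:EPC-PDC}. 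The one inequality your argument then genuinely lacks is the \emph{non-blockwise} lower bound \(\EPC_N^{(t,t-1)}(\{0,1\}^N\setminus S)\ge\sum_j\ol{\Lambda}_{n_j}(S_j)+2t-2\); note that \(\bEPC\ge\EPC\) goes the wrong way, and your construction only gives upper bounds. The fix is to observe that for a grid the lower-bound argument inside the proof of Theorem~\ref{thm:EPC-PDC} works verbatim for a plain \((t,t-1)\)-exact polynomial cover \(P\): taking \(z_j=q_j+1\) for all \(j\), case (C3) needs only \(W_{(n_1,\ldots,n_k)}(S)=\prod_jW_{n_j}(S_j)\), and case (C1) concerns a point lying in \(S\) itself, where ordinary exactness already gives \(\mult(P,x)=t-1\), so \(\mult(Q,x)=t-1\) without any blockwise hypothesis. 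Theorem~\ref{thm:EPC-index} together with Corollary~\ref{cor:k-layer-index-complexity} then yields \(\EPC_N\ge\sum_j\ol{\Lambda}_{n_j}(S_j)+2t-2\), and the five-way chain closes.

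A second point needs care in your first paragraph. You take the maximizing element of \(\outext(S)\) to be \((q_1+1,\ldots,q_k+1)\), so that \([S]_{j,z_j-1}=S_j\) in every coordinate. Under the paper's literal definition of \(\outext(S)\) as the set of minimal elements of \(\mb{N}^k\setminus\mc{N}(S)\), that tuple is not minimal once \(k\ge2\): the minimal elements are the coordinate spikes \((0,\ldots,0,q_j+1,0,\ldots,0)\), each contributing the single term \(\ol{\Lambda}_{n_j}(S_j)\), so the displayed formula would read \(\max_j\ol{\Lambda}_{n_j}(S_j)\) rather than \(\sum_j\ol{\Lambda}_{n_j}(S_j)\). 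The reading consistent with the proof and with Example~\ref{ex:EPC-PDC} is that \(\outext(S)\) consists of the maximal elements of \(\mc{N}(S)\) shifted up by one in each coordinate, so that \(\mc{N}(S)=\bigcup_{z\in\outext(S)}\prod_j[0,z_j-1]\); for a grid this is exactly the single tuple you use, and your evaluation is then correct. You should state explicitly which reading you are using, since the sum-versus-max discrepancy is precisely the content of the corollary.
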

\noindent  Further, when it comes to covering \(k\)-wise grids (and not their complements), we get the following as a corollary of Theorem~\ref{thm:EPC-PDC}.
\begin{corollary}\label{cor:covering-grid}
	For any \(k\)-wise grid \(S=S_1\times\cdots\times S_k\subsetneq\{0,1\}^N\) and \(t\ge1\), we have
	\[
	\bEPC_{(n_1,\ldots,n_k)}^{(t,t-1)}(S)=\max\big\{\Lambda_{n_j}(S_j):j\in[k]\big\}+2t-2.
	\]
\end{corollary}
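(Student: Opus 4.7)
The plan is to prove Corollary~\ref{cor:covering-grid} directly by matching lower and upper bounds, each reducing to the one-block symmetric case of Theorem~\ref{thm:multiplicity-symmetric} (the reduction ``in the complement direction'' via Theorem~\ref{thm:EPC-PDC} is not immediate, since the complement of a general \(k\)-wise grid need not be PDC). I will assume \(S\ne\emptyset\); the case \(S=\emptyset\) recovers the formula \(2t-2\) via the polynomial \(\sum_{j=1}^{k}[X_{j,1}(X_{j,1}-1)]^{t-1}\).

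For the lower bound, I fix \(j^{*}\in\arg\max_{j}\Lambda_{n_j}(S_j)\) and, using that \(S\ne\emptyset\) forces each \(S_i\ne\emptyset\), choose \(a_i\in S_i\) for every \(i\ne j^{*}\). Given any \((t,t-1)\)-block-exact polynomial cover \(P\) of \(S\), I would verify that the restriction \(\wt{P}(\mb{X}_{j^{*}})\coloneqq P(a_1,\ldots,a_{j^{*}-1},\mb{X}_{j^{*}},a_{j^{*}+1},\ldots,a_k)\) is a nonzero \((t,t-1)\)-exact polynomial cover of \(S_{j^{*}}\) in \(\{0,1\}^{n_{j^{*}}}\): for \(x\in S_{j^{*}}\) the ambient point lies in \(S\), so the multiplicity \(\ge t\) passes to \(\wt{P}\); for \(x\in\{0,1\}^{n_{j^{*}}}\setminus S_{j^{*}}\) the ambient point lies in \(\{0,1\}^{N}\setminus S\), and the block-exact condition at block \(j^{*}\) forces multiplicity exactly \(t-1\). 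Theorem~\ref{thm:multiplicity-symmetric} applied to \(\{0,1\}^{n_{j^{*}}}\setminus S_{j^{*}}\) then yields \(\deg\wt{P}\ge\Lambda_{n_{j^{*}}}(S_{j^{*}})+2t-2\), so \(\deg P\ge\max_{j}\Lambda_{n_j}(S_j)+2t-2\).

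For the upper bound, I construct \(P=R\cdot G\) with
\[
R(\mb{X})=\sum_{j=1}^{k}c_j\,Q_j(\mb{X}_j),\qquad G(\mb{X})=\sum_{j=1}^{k}\bigl[X_{j,1}(X_{j,1}-1)\bigr]^{t-1},
\]
where each \(Q_j\) is a \((1,0)\)-exact polynomial cover of \(S_j\) in \(\{0,1\}^{n_j}\) of degree \(\Lambda_{n_j}(S_j)\) (Theorem~\ref{thm:venkitesh}(a)), and the scalars \(c_j\in\mb{R}\) are chosen to be linearly independent over the subfield \(\wh{\mb{Q}}\coloneqq\mb{Q}(\{Q_i(b):i\in[k],\,b\in\{0,1\}^{n_i}\})\), following the template of Example~\ref{ex:venkitesh}(a). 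Then \(\deg P=\max_{j}\Lambda_{n_j}(S_j)+2(t-1)\). At \(a\in S\), every \(Q_j(a_j)=0\) so \(R(a)=0\) (multiplicity \(\ge 1\)), while a routine derivative computation confirms \(G\) vanishes at \(a\) with multiplicity exactly \(t-1\); hence \(P\) vanishes at \(a\) with multiplicity \(\ge t\). For the block-exact condition at \(a\in\{0,1\}^{N}\setminus S\) and \(j\in[k]\), I use (i) \(G(a_{-j},\mb{X}_j)=[X_{j,1}(X_{j,1}-1)]^{t-1}\), since the \(k-1\) other summands become \([a_{i,1}(a_{i,1}-1)]^{t-1}=0\), and (ii) \(R(a)=\sum_{i} c_i Q_i(a_i)\ne 0\) by \(\wh{\mb{Q}}\)-linear independence, because some \(Q_i(a_i)\ne 0\) on \(\{0,1\}^N\setminus S\). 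Multiplying, \(P(a_{-j},\mb{X}_j)\) vanishes at \(\tilde a_j\) with multiplicity exactly \(t-1\).

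The main obstacle I anticipate is the design of the multiplicity-boosting factor \(G\). A naive single-block choice such as \([X_{1,1}(X_{1,1}-1)]^{t-1}\) — natural given Example~\ref{ex:multi-block-symm} for covering complements of grids — breaks here: for \(j\ne 1\) it substitutes to the zero polynomial in \(\mb{X}_j\), producing infinite (rather than the required \(t-1\)) multiplicity and destroying block-exactness. The ``sum over all blocks'' construction is essential because, after any block-\(j\) substitution, exactly one summand of \(G\) survives nontrivially and it carries the correct multiplicity profile in \(\mb{X}_j\). Combined with the \(\wh{\mb{Q}}\)-linear independence trick for the \(c_j\)'s — recurring throughout this paper — this pins down block-exactness on all of \(\{0,1\}^N\setminus S\) and matches the lower bound.
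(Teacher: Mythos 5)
Your proof is correct, and it takes a genuinely different route from the paper's. The paper obtains this corollary by specializing Theorem~\ref{thm:EPC-PDC} to \(\{0,1\}^N\setminus S\), which requires the complement of a \(k\)-wise grid to be PDC; your hesitation there is justified — for \(k=2\), \(n_1=n_2=2\), \(W_{n_j}(S_j)=\{1\}\), the complement has weight set \([0,2]^2\setminus\{(1,1)\}\) inside the product of projections \([0,2]\times[0,2]\), which is not downward closed under any of the four products of orders from \(\ms{T}\). Your direct argument sidesteps this: the lower bound by freezing all blocks except the maximizing one at points of \(S_i\), using block-exactness at block \(j^*\) to get multiplicity exactly \(t-1\) off \(S_{j^*}\), and then invoking Theorem~\ref{thm:multiplicity-symmetric}, is sound and is exactly the right tool here because the answer is a maximum over single blocks rather than a sum. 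Your upper bound moreover repairs a real defect in Example~\ref{ex:covering-grid}: the factor \(\mc{H}^{\circ(t-1)}(\mb{X}_1)\) used there becomes the constant \([a_{1,1}(a_{1,1}-1)]^{t-1}=0\) under any block-\(j\) restriction with \(j\ne1\) and \(t\ge2\), so the restricted polynomial is identically zero and cannot have multiplicity exactly \(t-1\); your symmetrized factor \(G=\sum_j[X_{j,1}(X_{j,1}-1)]^{t-1}\) is what makes the construction genuinely block-exact, at no cost in degree. What the paper's route buys (when it applies) is a one-line specialization of a much more general formula; what yours buys is a self-contained proof valid for all grids together with a certificate that actually satisfies the blockwise definition.

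Two small caveats. For \(t=1\) the discarded summands of \(G\) equal \(1\), not \(0\), so \(G(a_{-j},\mb{X}_j)=k\) rather than \([X_{j,1}(X_{j,1}-1)]^{0}\); the conclusion (multiplicity exactly \(0\)) is unaffected. More substantively, your lower bound needs \(S_{j^*}\subsetneq\{0,1\}^{n_{j^*}}\) so that Theorem~\ref{thm:multiplicity-symmetric} applies to the nonempty symmetric set \(\{0,1\}^{n_{j^*}}\setminus S_{j^*}\). When the maximizer is a full block the statement itself fails — e.g.\ \(S=\{0,1\}^{10}\times\{0^2\}\) with \(t=1\) admits the degree-\(1\) cover \(X_{2,1}+X_{2,2}\), far below \(\Lambda_{10}(\{0,1\}^{10})=6\) — so this is a restriction to be read into the corollary rather than a gap in your argument, but it is worth stating explicitly.
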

\noindent A construction that implies the equality in Corollary~\ref{cor:covering-grid} is a special case of Example~\ref{ex:EPC-PDC}.
\begin{example}\label{ex:covering-grid}
	Let \(S=S_1\times\cdots\times S_k\subsetneq\{0,1\}^N\) be a nonempty \(k\)-wise grid, and \(t\ge1\).  For each \(j\in[k]\), define
	\[
	\mc{H}_{S_j}(\mb{X}_j)=\mc{H}^*_{\ol{\mu}_{n_j}(S_j)}(\mb{X}_j)\cdot\mc{H}'_{W_{n_j}(\{0,1\}^{n_j}\setminus S_j)\setminus W_{n_j,\ol{\mu}_{n_j}(S_j)}}(\mb{X}_j).
	\]
	Now consider a subfield of \(\mb{R}\) defined by \(\wh{\mb{Q}}=\mb{Q}\big(\mc{H}_{S_j}(b):b\in\{0,1\}^{n_j},\,j\in[k]\big)\).  It follows that \(\mb{R}\) is an infinite dimensional \(\wh{\mb{Q}}\)-vector space.  Choose any \(\wh{\mb{Q}}\)-linearly independent subset \(\{\lambda_1,\ldots,\lambda_k\}\subseteq\mb{R}\).  Then the polynomial
	\[
	\bigg(\sum_{j=1}^k\lambda_j\mc{H}_{S_j}(\mb{X}_j)\bigg)\cdot\mc{H}^{\circ(t-1)}(\mb{X}_1)
	\]
	witnesses the equality in Corollary~\ref{cor:covering-grid}.
\end{example}

\subsubsection{Partial results on other multiplicity polynomial covers}\label{subsubsec:partial-covers}

Let us now mention a couple of results on \((t,0)\)-exact polynomial covers.  The first result concerns the polynomial covering problem for the \emph{Hamming ball}, which is a symmetric set defined by a set of weights of the form \([0,w]\).
\begin{proposition}\label{pro:EPC-hamming}
	For \(w\in[0,n-1]\), let \(S\subsetneq\{0,1\}^n\) be the symmetric set defined by \(W_n(S)=[0,w-1]\).  Then for any \(t\in\big[2,\big\lfloor\frac{n+3}{2}\big\rfloor\big]\), we have
	\[
	\EPC_n^{(t,0)}(S)=w+2t-3.
	\]
	Further, the answer to Question~\ref{ques:main} is negative, in general.
\end{proposition}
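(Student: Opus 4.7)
The plan is to establish $\EPC_n^{(t,0)}(S) = w + 2t - 3$ by matching upper and lower bounds, and separately exhibit a small instance where the corresponding exact hyperplane cover number strictly exceeds this, which will settle the final assertion.

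For the upper bound, I would construct a polynomial of degree $w + 2t - 3$ that is a $(t,0)$-exact polynomial cover for $S$. The natural skeleton is the product $\prod_{j=0}^{w-1}(X_1+\cdots+X_n - j)$ of degree $w$, which has multiplicity exactly $1$ at every point of $S$ and does not vanish on $\{0,1\}^n \setminus S$; what remains is to multiply by an asymmetric factor of degree $2t-3$ that uniformly boosts the multiplicity from $1$ to $t$ on $S$ without vanishing on the complement. Following the philosophy of the Sauermann--Wigderson construction for $\EPC_n^{(t,\ell)}(\{0,1\}^n\setminus\{0^n\}) = n+2t-3$ with $\ell<t-1$ (Theorem~\ref{thm:sauermann-wigderson}), the booster should be assembled from combinations of factors of the form $X_i^{a}(X_i-1)^{b}$ with $a+b=2t-3$, or from products of $H^*$-type hyperplanes from Lemma~\ref{lem:T}, chosen so that the additional multiplicity $t-1$ arises constructively at every weight in $[0,w-1]$ while non-vanishing on every weight in $[w,n]$. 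As a fallback, existence can be argued via a dimension count on polynomials of degree $\le w+2t-3$ satisfying the $(t,0)$-vanishing conditions on $S$, combined with the algebraic-independence trick from Example~\ref{ex:venkitesh}(a) to avoid the finite complement.

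For the lower bound, I would adapt the Sauermann--Wigderson partial-derivative machinery. Let $P$ be a $(t,0)$-exact polynomial cover for $S$. Theorem~\ref{thm:venkitesh}(a) already yields $\deg P \ge \Lambda_n(S) = w$, so I need to extract an additional $2t-3$. The key idea is to restrict $P$ to carefully chosen affine lines joining pairs of points of $S$: for the coordinate-axis line $\gamma(Z)=Z e_i$ (when $w\ge 2$ so $e_i\in S$), the univariate restriction $P(\gamma(Z))$ has multiplicity $\ge t$ at both $Z=0$ and $Z=1$, forcing degree $\ge 2t$ on that line when nonzero. In the degenerate case where the restriction vanishes identically, $P$ is forced into a coordinate ideal, and one iterates the argument over many lines (or their higher-weight analogues) to pin down $\deg P \ge w+2t-3$. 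The main obstacle is combining the baseline degree-$w$ bound with the $2t-3$ contribution without double-counting; I expect the cleanest path is to invoke Sauermann--Wigderson's multiplicity Combinatorial Nullstellensatz directly on a suitably reduced or derivative-processed polynomial (e.g., $\partial_1^{t-1}P$ or a symmetrization) so that their single-point lower bound applies to one factor and the $w$ weight-layer constraint applies to the other.

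For the claim that Question~\ref{ques:main} has a negative answer in general, I would exhibit the instance $n=3,\ w=2,\ t=2$, where the first part gives $\EPC_3^{(2,0)}(S)=3$ for $S=\{000,100,010,001\}$. I claim $\EHC_3^{(2,0)}(S)=4$. The upper bound $4$ is witnessed by two copies of $X_1+X_2+X_3=1$ (double-covering each weight-one point) together with two generic hyperplanes $\sum c_j X_j=0,\ \sum c'_jX_j=0$ through $0^3$ avoiding $\{110,101,011,111\}$ (double-covering $0^3$). For the lower bound, the unique hyperplane in $\mb{R}^3$ containing all three weight-one points is $X_1+X_2+X_3=1$, while any hyperplane $\sum c_jX_j=0$ through $0^3$ that avoids every weight-two point can contain at most one of $e_1,e_2,e_3$ (otherwise a weight-two point would lie on it); a short case analysis then rules out any $3$-hyperplane $(2,0)$-exact cover of $S$. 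The main obstacle is keeping this case analysis clean, but the small size of the instance makes it tractable.
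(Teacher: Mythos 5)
Your counterexample for the final assertion ($n=3$, $t=2$, $S=\{000,100,010,001\}$, ruling out three hyperplanes because a hyperplane through the origin that avoids the weight-two points contains at most one $e_i$) is essentially the paper's argument and is fine. The two main parts, however, have genuine gaps.

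The upper bound as primarily proposed cannot work. Multiplicity is additive over products, and your skeleton $\prod_{j=0}^{w-1}(X_1+\cdots+X_n-j)$ has multiplicity exactly $1$ at each point of $S$; hence any booster $B$ of degree $2t-3$ making the product a $(t,0)$-cover must itself vanish to order $\ge t-1$ at every point of $S$ and be nonzero off $S$, i.e.\ $B$ is a $(t-1,0)$-exact polynomial cover for $S$. Already for $t=2$, $w\ge2$ this forces the affine linear $B$ to vanish at $0^n,e_1,\ldots,e_n$, which affinely span $\mb{R}^n$, so $B\equiv0$; for $t\ge3$ the proposition's own lower bound gives $\deg B\ge w+2(t-1)-3>2t-3$ once $w>2$. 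So no such booster exists, even in the smallest interesting cases. The dimension-count fallback does not rescue this: vanishing to order $t$ on $S$ is a linear condition, but nonvanishing at every point of the complement still requires exhibiting, for each such point $b$, a polynomial in that space with value $\ne0$ at $b$, which is the whole difficulty. The paper's construction is additive rather than multiplicative: take a Sauermann--Wigderson extremal polynomial $Q$ of degree $w+2t-3$ on $\{0,1\}^w$ that is a $(t,0)$-cover of $\{0,1\}^w\setminus\{1^w\}$, and set $\wt{Q}(\mb{X})=\sum_{1\le i_1<\cdots<i_w\le n}Q(X_{i_1},\ldots,X_{i_w})$; at a point of weight $u\ge w$ this evaluates to $\binom{u}{w}Q(1^w)\ne0$, while at weight $\le w-1$ every summand vanishes to order $\ge t$.

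For the lower bound, your plan never closes the gap you yourself flag (combining the degree-$w$ baseline with the extra $2t-3$ without double counting), and neither the line restrictions nor the derivative $\partial_1^{t-1}P$ is developed into an argument. The missing idea is a restriction to a subcube: setting $X_{w+1}=\cdots=X_n=0$ turns any $(t,0)$-cover $P$ of $S$ into a $(t,0)$-cover of $\{0,1\}^w\setminus\{1^w\}$, since $1^w0^{n-w}$ has weight $w\notin W_n(S)$ while every other point of that subcube lies in $S$. Theorem~\ref{thm:sauermann-wigderson-again} applied in dimension $w$ then gives $\deg P\ge\deg\wt{P}\ge w+2t-3$ in a single step, with no baseline-plus-increment bookkeeping needed.
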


The second result concerns the polynomial covering problem for a single layer.  Surprisingly, in this case, our proof employs basic analytic facts about coordinate transformations of polynomials, but we do not know of a proof via the polynomial method.
\begin{proposition}\label{pro:EPC-layer}
	For any layer \(S\subsetneq\{0,1\}^n\) with \(W_n(S)=\{w\}\), and \(t\ge1\), we have
	\[
	\EPC_n^{(t,0)}(S)=t.
	\]
\end{proposition}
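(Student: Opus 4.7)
My plan is to establish $\EPC_n^{(t,0)}(S)=t$ via matching upper and lower bounds, both elementary. The lower bound will only exploit the multiplicity condition at a single point of $S$, so it avoids any symmetrization or functional certificate; this is presumably what the authors mean by an analytic argument via coordinate transformations rather than the polynomial method.

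For the upper bound, I would take
\[
P(\mb{X}) \;\coloneqq\; \left(\sum_{i=1}^n X_i - w\right)^{t},
\]
a polynomial of degree $t$. Writing $L(\mb{X})=\sum_i X_i - w$, note that $L(x)=|x|-w$ for $x\in\{0,1\}^n$. Thus $L(x)=0$ for every $x\in S$, and since $L^t$ is a $t$-th power of $L$, every partial derivative of $L^t$ of order strictly less than $t$ retains a positive power of $L$ and therefore vanishes at $x$; hence $P$ vanishes on $S$ with multiplicity at least $t$. For every $x\in\{0,1\}^n\setminus S$ we have $L(x)\ne 0$, so $P(x)\ne 0$. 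This shows $\EPC_n^{(t,0)}(S)\le t$.

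For the lower bound, I would first record an elementary lemma: \emph{if a nonzero polynomial $P\in\mb{R}[\mb{X}]$ vanishes at some point $a\in\mb{R}^n$ with multiplicity at least $t$, then $\deg P\ge t$.} The proof is via translation: set $Q(\mb{Y})\coloneqq P(\mb{Y}+a)$, which preserves total degree. By the multiplicity hypothesis, for every multi-index $I$ with $|I|<t$, the derivative $\partial^I Q$ evaluated at the origin equals $(\partial^I P)(a)=0$; so in the expansion $Q(\mb{Y})=\sum_I c_I\mb{Y}^I$, every coefficient with $|I|<t$ is zero. Since $P\ne 0$, also $Q\ne 0$, forcing some $c_I\ne 0$ with $|I|\ge t$, whence $\deg P = \deg Q \ge t$. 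Applying this lemma at any $a\in S$ (the layer is nonempty since $w\in[0,n]$) to a $(t,0)$-exact polynomial cover $P$ of $S$ yields $\deg P\ge t$, that is, $\EPC_n^{(t,0)}(S)\ge t$.

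I anticipate no serious obstacle in this plan: both bounds are direct and meet at $t$. The only noteworthy feature is that the lower bound is driven entirely by one point of $S$, so the structure of the layer (beyond its nonemptiness, in particular the value of $w$) plays no role -- consistent with the authors' remark that a polynomial-method proof is not known, and with the fact that the same lower bound actually holds for the polynomial covering problem of any nonempty subset of $\{0,1\}^n$.
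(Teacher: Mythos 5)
Your proposal is correct and follows essentially the same route as the paper: the upper bound via the polynomial $\left(\sum_{i=1}^n X_i-w\right)^t$, and the lower bound via the Taylor expansion about a single point $a\in S$ showing that any nonzero polynomial vanishing there with multiplicity at least $t$ has degree at least $t$. If anything, your translation-to-the-origin argument is a slight streamlining of the paper's version, which routes through an (unnecessary) invertible linear change of coordinates before invoking the same Taylor-expansion observation.
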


\subsubsection{Cool-down: Index complexity of PDC \(k\)-wise symmetric sets}\label{subsubsec:cooldown}

We conclude this by noting that the index complexity, which is a weaker notion for the blockwise covering problems that we consider, can be characterized to a good extent, even in the generality of PDC \(k\)-wise symmetric sets.  Note that for symmetric sets \(S,S'\subseteq\{0,1\}^n\) with \(S'\subseteq S\), if \(J_{n,a,b}=\outint(S)\), then \(J_{n,a,b}=\outint(S')\) if and only if \(\{a,b\}\subseteq W_n(S')\).  This turns out to be an important structural feature that we will work with.

Assume the block decomposition of the hypercube \(\{0,1\}^N=\{0,1\}^{n_1}\times\cdots\times\{0,1\}^{n_k}\).  Now let \(S\subseteq\{0,1\}^N\) be a nonempty PDC \(k\)-wise symmetric set.  Further, for each \(j\in[k]\), consider \(S_j\subseteq\{0,1\}^{n_j}\) (the \(j\)-th projection of \(S\)) and let \(J_{n_j,a_j,b_j}=\outint(S_j)\).  We define \(S\) to be \tsf{outer intact} if for every \((z_1,\ldots,z_k)\in\innext(S)\) and \(j\in[k]\), we have \(J_{n_j,a_j,b_j}=\outint([S]_{j,z_j})\).  Equivalently, \(S\) is outer intact if and only if
\[
\{a_j,b_j\}\subseteq W_{n_j}([S]_{j,z_j})\tx{ for each }j\in[k],\quad\tx{for every }(z_1,\ldots,z_k)\in\innext(S).
\]

\begin{proposition}\label{pro:PDC-index-complexity}
	For any nonempty outer intact PDC \(k\)-wise symmetric set \(S\subseteq\{0,1\}^N\), we have
	\[
	r_N(S)=\sum_{j=1}^kr_{n_j}(S_j)=\sum_{j=1}^k\out_{n_j}(S_j).
	\]
\end{proposition}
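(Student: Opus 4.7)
The second equality is immediate by applying Proposition~\ref{pro:index-complexity-symmetric} to each projection \(S_j\); so my plan is to prove the first equality via matching upper and lower bounds on \(r_N(S)\). Throughout, write \(\outint(S_j)=J_{n_j,a_j,b_j}\), and identify each \([n_j]\) with its embedding into the \(j\)-th block of \([N]\).

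For the upper bound \(r_N(S)\le\sum_{j=1}^k r_{n_j}(S_j)\), I would first fix any maximal index tuple \((z_1^*,\ldots,z_k^*)\in\innext(S)\), which exists since \(S\) is nonempty.  The outer intact hypothesis gives \(\outint([S]_{j,z_j^*})=J_{n_j,a_j,b_j}\), and hence whichever of \(a_j,b_j\) lies in \([0,n_j]\) belongs to \(W_{n_j}([S]_{j,z_j^*})\).  For each \(j\) I would use the single-block construction underlying Proposition~\ref{pro:index-complexity-symmetric}: if \(a_j\ge0\), pick \(u_j=(1^{a_j},0^{n_j-a_j})\) and \(I_j=[1,a_j]\cup[b_j,n_j]\); if \(a_j=-1\), pick \(u_j=(1^{b_j},0^{n_j-b_j})\) and \(I_j=[b_j+1,n_j]\).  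In either case \(|I_j|=\out_{n_j}(S_j)=r_{n_j}(S_j)\), and an elementary check using \(W_{n_j}(S_j)\subseteq[0,a_j]\cup[b_j,n_j]\) shows that \(I_j\) separates \(u_j\) from \(S_j\setminus\{u_j\}\).  Applying PDC at \((z_1^*,\ldots,z_k^*)\in\mc{N}(S)\) gives \(W_{n_1}([S]_{1,z_1^*})\times\cdots\times W_{n_k}([S]_{k,z_k^*})\subseteq W_{(n_1,\ldots,n_k)}(S)\), so \(u\coloneqq(u_1,\ldots,u_k)\in S\).  For any \(v\in S\setminus\{u\}\) some \(v_j\ne u_j\) with \(v_j\in S_j\), and then \(u,v\) differ on a coordinate in \(I_j\); taking \(I\coloneqq I_1\sqcup\cdots\sqcup I_k\) completes the upper bound.

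For the lower bound, suppose \((u,I)\) achieves \(|I|=r_N(S)\), with \(u\in S\) at associated index \((y_1,\ldots,y_k)\in\mc{N}(S)\), and decompose \(I=I_1\sqcup\cdots\sqcup I_k\) with \(I_j\subseteq[n_j]\).  For each \(j\), set \(m_j\coloneqq\max\{y_j'\in[0,q_j]:(y_1,\ldots,y_j',\ldots,y_k)\in\mc{N}(S)\}\); by downward closedness of \(\mc{N}(S)\) this admissible range equals \([0,m_j]\), and hence the cross-section \(C_j(u)\coloneqq\{v_j\in\{0,1\}^{n_j}:(u_1,\ldots,v_j,\ldots,u_k)\in S\}\) equals \([S]_{j,m_j}\).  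Embedding \(v_j\mapsto(u_1,\ldots,v_j,\ldots,u_k)\) and using that \((u,I)\) separates \(S\) shows that \(I_j\) separates \(u_j\) from \(C_j(u)\setminus\{u_j\}\), whence \(|I_j|\ge r_{n_j}([S]_{j,m_j})=\out_{n_j}([S]_{j,m_j})\) by Proposition~\ref{pro:index-complexity-symmetric}.  The final step is to upgrade this to \(\out_{n_j}([S]_{j,m_j})=\out_{n_j}(S_j)\): starting from \((y_1,\ldots,m_j,\ldots,y_k)\in\mc{N}(S)\), greedily increase the coordinates \(i\ne j\) one at a time (keeping the \(j\)-th coordinate fixed at \(m_j\)) as long as one stays in \(\mc{N}(S)\), producing some \((z_1,\ldots,z_k)\) with \(z_j=m_j\); by the defining property of \(m_j\) combined with downward closedness, the \(j\)-th coordinate also cannot be further increased, so \((z_1,\ldots,z_k)\in\innext(S)\), and the outer intact hypothesis delivers \(\outint([S]_{j,m_j})=J_{n_j,a_j,b_j}=\outint(S_j)\).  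Summing \(|I_j|\ge\out_{n_j}(S_j)=r_{n_j}(S_j)\) over \(j\) yields the desired lower bound.

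The main obstacle is the lower bound, and specifically understanding why \emph{outer intact} is precisely the hypothesis that forces the per-block separation cost to be at least \(\out_{n_j}(S_j)\): a priori, the cross-section \([S]_{j,m_j}\) can have a strictly smaller outer interval than \(S_j\), so bounding \(|I_j|\ge\out_{n_j}([S]_{j,m_j})\) alone would not suffice.  The greedy ``extend to a maximal element with the \(j\)-th coordinate frozen'' trick above is the conceptual heart of the proof, converting a local cross-section bound back into the global \(\out_{n_j}(S_j)\) via a single application of outer intact at the resulting maximal index tuple.
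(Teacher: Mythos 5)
Your proof is correct and follows essentially the same route as the paper's: the upper bound concatenates the single-block separators coming from Proposition~\ref{pro:index-complexity-symmetric} (using PDC plus outer intactness to place the product point inside \(S\)), and the lower bound restricts a minimal separator to the blockwise cross-sections \([S]_{j,\cdot}\) and invokes outer intactness to identify their outer intervals with those of the projections \(S_j\). The only cosmetic difference is in the lower bound, where the paper lands on an element of \(\innext(S)\) directly by choosing a maximal tuple dominating the index of the separated point, whereas you reconstruct one via the greedy argument with the \(j\)-th coordinate frozen at \(m_j\); both are valid.
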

\noindent  An important special case of Proposition~\ref{pro:PDC-index-complexity} is for a \(k\)-wise layer, which is trivially outer intact PDC.  As an immediate corollary of Proposition~\ref{pro:PDC-index-complexity} and Proposition~\ref{pro:layer-index-complexity}, we get the following.
\begin{corollary}\label{cor:k-layer-index-complexity}
	For any \(k\)-wise layer \(S=S_1\times\cdots\times S_k\subseteq\{0,1\}^N\) with \(W_{n_j}(S_j)=\{w_j\},\,j\in[k]\), we have
	\[
	r_N(S)=\sum_{j=1}^k\min\{w_j,n_j-w_j\}.
	\]
\end{corollary}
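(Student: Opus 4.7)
My plan is to verify that Corollary~\ref{cor:k-layer-index-complexity} follows directly from Proposition~\ref{pro:PDC-index-complexity} combined with the layer case worked out in Proposition~\ref{pro:layer-index-complexity}, with essentially no additional combinatorial content. The only nontrivial step is checking that a $k$-wise layer satisfies the hypotheses of Proposition~\ref{pro:PDC-index-complexity}, i.e., that it is an outer intact PDC $k$-wise symmetric set. Once this hypothesis check is done, the identity drops out by substitution.

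First, I would verify the PDC condition. Let $S = S_1 \times \cdots \times S_k$ be a $k$-wise layer with $W_{n_j}(S_j) = \{w_j\}$. Since each $S_j$ is a layer, every $W_{n_j}(S_j)$ has only the single element $w_{j,0} = w_j$ and $[S]_{j,0} = S_j$, irrespective of which total order from $\ms{T}$ is chosen. Then $W_{(n_1,\ldots,n_k)}(S) = \{(w_1,\ldots,w_k)\}$ is trivially downward closed in $W_{n_1}(S_1) \times \cdots \times W_{n_k}(S_k)$ under any induced partial order, so $S$ is PDC. For outer intactness, the only index vector in $\mc{N}(S)$ is $(0,\ldots,0)$ (so this is also the unique element of $\innext(S)$), and for each $j\in[k]$ we have $[S]_{j,0} = S_j$, giving $\outint([S]_{j,0}) = \outint(S_j) = J_{n_j,a_j,b_j}$ by definition.

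Next, I would apply Proposition~\ref{pro:PDC-index-complexity} to conclude
\[
r_N(S) = \sum_{j=1}^k r_{n_j}(S_j).
\]
Finally, I would invoke Proposition~\ref{pro:layer-index-complexity} at each coordinate block: since $S_j$ is a layer with $W_{n_j}(S_j) = \{w_j\}$, we have $n_j - r_{n_j}(S_j) = \max\{w_j, n_j - w_j\}$, hence $r_{n_j}(S_j) = \min\{w_j, n_j - w_j\}$. Summing over $j$ yields the claimed formula.

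I do not anticipate any obstacle here, since the PDC and outer intactness conditions degenerate in the single-weight case, and the layer formula for $r_{n_j}(S_j)$ is already established. The only mild bookkeeping is to observe that the definition of $[S]_{j,z_j}$ and $\innext(S)$ is consistent regardless of which total order $\le_j \in \ms{T}$ one picks, which is immediate when $|W_{n_j}(S_j)| = 1$.
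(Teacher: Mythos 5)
Your proposal is correct and matches the paper's own derivation: the paper likewise observes that a \(k\)-wise layer is trivially an outer intact PDC \(k\)-wise symmetric set, applies Proposition~\ref{pro:PDC-index-complexity} to get \(r_N(S)=\sum_j r_{n_j}(S_j)\), and then substitutes \(r_{n_j}(S_j)=\min\{w_j,n_j-w_j\}\) from Proposition~\ref{pro:layer-index-complexity}. Your explicit verification of the PDC and outer intactness hypotheses is a sound (if slightly more detailed) rendering of the same argument.
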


Proposition~\ref{pro:PDC-index-complexity} shows that the index complexity is sensitive only to the blockwise projections, but Theorem~\ref{thm:EPC-PDC} (for any PDC \(k\)-wise symmetric set) shows that the characterization of the polynomial covering problem is more sensitive to the \emph{specific PDC structure}.  This adds to our observation that our polynomial method argument is stronger than simply giving a lower bound in terms of index complexity.

\subsection{Related work}\label{subsec:related}

In addition to the works that motivated our results, there is a plethora of literature on hyperplane covering problems and related questions, over both the reals as well as finite fields.  Even more, the polynomial method itself has been subject to intense investigation in the last few decades.  We mention here a sample from this vast literature that we believe is most relevant to our present work.

\subsubsection*{Hyperplane covering problems}
\begin{itemize}
	\item  Alon, Bergmann, Coppersmith, and Odlyzko studied a \emph{balancing problem} for sets of binary vectors, which admits a simple reformulation as a hyperplane covering problem.  An extension of this problem to higher order complex roots of unity, which takes the form of a polynomial covering problem, was studied by Heged\H{u}s~\cite{hegedus-2010-balancing}.
	
	\item  K\'os, M\'esz\'aros, and R\'onyai~\cite{kos2012alon} extended the result of Alon and F\"uredi~\cite{alon-furedi} to the case where the vanishing constraints at every point of the hypercube have multiplicities depending on the individual coordinates of the point.  The question in~\cite{alon-furedi} itself was extracted by B\'ar\'any from the work of Komj\'ath~\cite{komjath1994partitions}.
	
	\item  Linial and Radhakrishnan~\cite{linial2005essential} considered the notion of an \emph{essential hyperplane cover} for the hypercube, which is a minimal family of hyperplanes that are sufficiently \emph{oblique}, and such that every coordinate influences at least one hyperplane.  They gave an upper bound of \(\lfloor n/2\rfloor+1\) and a lower bound of \(\Omega(\sqrt{n})\).  Saxton~\cite{saxton2013essential} gave a tight bound of \(n+1\) in the special case wherein the coefficients of all the variables in the affine linear polynomials representing the hyperplanes are restricted to be nonnegative.  Recent breakthroughs by Yehuda and Yehudayoff~\cite{yehuda2021lower}, and Araujo, Balogh, and Mattos~\cite{balogh2022essential} have improved the lower bound to \(n^{5/9-o(1)}\).
	
	\item  Several extensions and variants of covering problems over finite fields have appeared in the language of hyperplanes as well as in the dual language of \emph{blocking sets}, and the proof techniques in most of these works involve the polynomial method -- Jamison~\cite{JAMISON1977253}, Brouwer~\cite{BROUWER1978251}, Ball~\cite{BALL2000441}, Zanella~\cite{ZANELLA2002381}, Ball and Serra~\cite{ball2009punctured}, Blokhuis~\cite{blokhuis2010covering}, and Bishnoi, Boyadzhiyska, Das and M\'esz\'aros~\cite{bishnoi2021subspace}, to name a few.  
\end{itemize}

\subsubsection*{The polynomial method}
\begin{itemize}
	\item  One of the simplest ways to formally encapsulate the polynomial method is via a classical algebraic object called the \emph{finite-degree Zariski closure}.  It was defined by Nie and Wang~\cite{nie2015hilbert} in the context of combinatorial geometry over finite fields, who studied bounds on its size for arbitrary subsets of the hypercube.  However, it had been studied implicitly even earlier by, for instance, Wei~\cite{wei-1991-GHM}, Heijnen and Pellikaan~\cite{heijnen-pellikaan-1998-GHM-Reed-Muller}, Keevash and Sudakov~\cite{keevash-sudakov-2005-min-rank-inclusion}, and Ben-Eliezer, Hod, and Lovett~\cite{ben-eliezer-hod-lovett-2012-low-degree-polys}.  Attempts to characterize the finite-degree Zariski closures of symmetric sets of the hypercube were done in the works of Heged\H{u}s~\cite{hegedus-2010-balancing,hegedus-2021-L-balancing}, the fourth author~\cite{venkitesh-2022-covering}, as well as Srinivasan and the fourth author~\cite{srinivasan-venkitesh-2021-mod-p} (and also implicitly in Bernasconi and Egidi~\cite{bernasconi-egidi-1999-hilbert-symmetric}).
	
	\item  A stronger notion than finite-degree Zariski closure is another algebraic object called the \emph{affine Hilbert function}.  The affine Hilbert functions of all layers of the hypercube over all fields were determined by Wilson~\cite{wilson-1990-diagonal-incidence}.  Further, Bernasconi and Egidi~\cite{bernasconi-egidi-1999-hilbert-symmetric} determined the affine Hilbert functions of all symmetric sets of the hypercube over the reals.  This was extended to the setting of larger grids by the fourth author~\cite{venkitesh-2021-Hilbert}.
	
	\item  An even stronger notion than affine Hilbert functions is yet another algebraic object called the \emph{Gr\"obner basis}, along with the associated collection of \emph{standard monomials}.  Anstee, R\'onyai, and Sali~\cite{anstee-ronyai-sali-2002-shattering}, and Friedl and R\'onyai~\cite{friedl-ronyai-2003-order-shattering} studied the standard monomials for any subset of the hypercube in terms of a combinatorial phenomenon called \emph{order shattering}.  Felszeghy, R\'ath, and R\'onyai~\cite{felszeghy-rath-ronyai-2006-lex} characterized the standard monomials of all symmetric sets of the hypercube via a \emph{lex game}.  Heged\H{u}s and R\'onyai~\cite{hegedus-ronyai-2003-grobner-complete-uniform,hegedus-ronyai-2018-linear-sperner}, and Felszeghy, Heged\H{u}s, and R\'onyai~\cite{felszeghy-hegedus-ronyai-2009-complete-wide} characterized the Gr\'obner basis for special cases of symmetric sets of the hypercube.
\end{itemize}

\subsection*{Organization of the paper}\label{subsec:organization}

In Section~\ref{sec:prelims}, we begin by covering some preliminaries, as well as setup some terminologies and notations.  In Section~\ref{sec:index-complexity-PDC}, we will obtain characterizations of index complexity for the symmetry preserving subsets that we are interested in.  This covers our warmup results (SectioN~\ref{subsubsec:warmup}) and our cooldown results (Section~\ref{subsubsec:cooldown}).  In Section~\ref{sec:covering-PDC}, we will prove our third main result -- a characterization for the blockwise polynomial covering problem (Theorem~\ref{thm:EPC-PDC} in Section~\ref{subsubsec:covering-PDC-k-symmetric}).  We will also note that our first main result (Theorem~\ref{thm:multiplicity-symmetric} in Section~\ref{subsubsec:covering-symmetric}), our second main result (Theorem~\ref{thm:multiplicity-block-symmetric} in Section~\ref{subsubsec:covering-k-symmetric}), as well as all other results in Section~\ref{subsubsec:covering-symmetric}, Section~\ref{subsubsec:covering-k-symmetric}, and Section~\ref{subsubsec:covering-PDC-k-symmetric} are corollaries of Theorem~\ref{thm:EPC-PDC}.  In Section~\ref{sec:partial-results}, we will prove our partial results on other higher multiplicity polynomial covers (Section~\ref{subsubsec:partial-covers}).  Finally, in Section~\ref{sec:conclusion}, we conclude with a discussion on some open questions.

\section{Preliminaries}\label{sec:prelims}

In this section, we will refresh some essential preliminary notions, as well as setup terminologies and notations.

\subsection{Posets}\label{subsec:prelims-posets}

Let \((P,\le)\) be a poset, that is, \(\le\) is a partial order on a nonempty set \(P\).  For a subset \(S\subseteq P\), we denote \(\min_\le(S)\) to be the set of all \tsf{minimal elements} of \(S\), and \(\max_\le(S)\) to be the set of all \tsf{maximal elements}, that is,
\begin{align*}
\textstyle\min_\le(S)&=\{a\in S:(b\in S,\,b\le a)\implies b=a\},\\
\textstyle\max_\le(S)&=\{a\in S:(b\in S,\,b\ge a)\implies b=a\}.
\end{align*}
Further, we define the sets of \tsf{outer extremal elements} and \tsf{inner extremal elements} of \(S\), respectively, by
\begin{align*}
\textstyle\outext_\le(S)&=\textstyle\min_\le(P\setminus S),\\
\tx{and}\quad\textstyle\innext_\le(S)&=\textstyle\max_\le(S).
\end{align*}
A subset \(S\subseteq P\) is defined to be \tsf{downward closed} if
\[
a\in S,\,b\in P,\,b\le a\quad\implies\quad b\in P.
\]

For two posets \((P_1,\le_1)\) and \((P_2,\le_2)\), the \tsf{product poset} is the poset \((P_1\times P_2,\le)\), where \(\le\) is defined by
\[
(a_1,a_2)\le(b_1,b_2)\tx{ if and only if }a_1\le_1 b_1\tx{ and }a_2\le_2b_2.
\]
We also say \(\le\) is the \tsf{induced order} on \(P_1\times P_2\).

If we consider the obvious total order \(\le\) on \(\mb{N}\) given by \(0<1<2<3<\cdots\), then the induced order on \(\mb{N}^k\) is called the \tsf{natural order} on \(\mb{N}^k\).

\subsection{Symmetry preserving subsets of the hypercube}\label{subsec:prelims-symmetry}

We are interested in hyperplane and polynomial covering problems for some structured subsets of the hypercube \(\{0,1\}^n\), where the \emph{structures} that we are concerned with are specified by invariance under the action of some subgroups of the symmetric group \(\mf{S}_n\).

\paragraph*{Symmetric sets.}  Let \(S\subseteq\{0,1\}^n\).  We say \(S\) is \tsf{symmetric} if
\[
(x_1,\ldots,x_n)\in S\tx{ and }\sigma\in\mf{S}_n\quad\implies\quad(x_{\sigma(1)},\ldots,x_{\sigma(n)})\in S.
\]
It follows immediately that \(S\) is symmetric if and only if
\[
x\in S,\,y\in\{0,1\}^n,\tx{ and }|y|=|x|\quad\implies\quad y\in S.
\]
In this case, we denote \(W_n(S)=\{|x|:x\in S\}\subseteq[0,n]\).  So the symmetric set \(S\) is completely determined by \(W_n(S)\).  If \(|W_n(S)|=1\), then we say \(S\) is a \tsf{layer}.  It is immediate that a subset of the hypercube is symmetric if and only if it is a union of some collection of layers.

\paragraph*{Two combinatorial measures.}  For any \(x\in\{0,1\}^n\), the \tsf{Hamming weight} of \(x\) is defined by \(|x|=\{i\in[n]:x_i=1\}\).  For any subset of coordinates \(I\subseteq[n]\), we denote \(x_I=(x_i:i\in I)\in\{0,1\}^I\).  We require a simple combinatorial measure defined in~\cite{ghosh-kayal-nandi-2023-covering}.  For a subset \(S\subseteq\{0,1\}^n\), the \tsf{index complexity} is defined by
\[
r_n(S)=\min\{|I|:I\subseteq[n],\,\tx{there exists \(a\in S\) such that }b_I\ne a_I\tx{ for all }b\in S,\,b\ne a\}.
\]
So \(r_n(S)\) is the minimum number of coordinates required to \emph{separate some element in \(S\) from all other elements in \(S\)}.

An important symmetric set that we will need consists of elements with Hamming weights in an \emph{initial interval} of weights or a \emph{final interval} of weights.  For \(i\in[0,n]\), define \(W_{n,i}=[0,i-1]\cup[n-i+1,n]\), and the symmetric set \(T_{n,i}\subseteq\{0,1\}^n\) by \(W_n(T_{n,i})=W_{n,i}\).  We also require another combinatorial measure, that is specific to symmetric sets, defined in~\cite{venkitesh-2022-covering}.  For any symmetric set \(S\subseteq\{0,1\}^n\), define
\begin{align*}
\mu_n(S)&=\max\{i\in[0,\lceil n/2\rceil]:W_{n,i}\subseteq W_n(S)\},\\
\tx{and}\quad\Lambda_n(S)&=|W_n(S)|-\mu_n(S).
\end{align*}
Further, we denote \(\ol{\mu}_n(S)\coloneqq\mu_n(\{0,1\}^n\setminus S)\) and \(\ol{\Lambda}_n(S)\coloneqq\Lambda_n(\{0,1\}^n\setminus S)\).  We will also need a simple fact about the invariance of the above two combinatorial measures under \emph{complementation of coordinates}.  It follows straightforwardly from the definitions, and we give a proof in Appendix~\ref{app:complement}.
\begin{fact}\label{fact:transform}
	Let \(S\subseteq\{0,1\}^n\) be a symmetric set, and \(\wt{S}\) be the image of \(S\) under the coordinate transformation \((X_1,\ldots,X_n)\mapsto(1-X_1,\ldots,1-X_n)\).
	\begin{enumerate}[(a)]
		\item  If \(S\ne\{0,1\}^n\), then \(\Lambda_n(\wt{S})=\Lambda_n(S)\).
		\item  If \(S\ne\emptyset\), then \(r_n(\wt{S})=r_n(S)\).
	\end{enumerate}
\end{fact}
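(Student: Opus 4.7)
The plan is to exploit the fact that the coordinate-wise complementation map $\tau\colon(x_1,\ldots,x_n)\mapsto(1-x_1,\ldots,1-x_n)$ is a weight-reversing involution on $\{0,1\}^n$: it sends a point of Hamming weight $w$ to one of weight $n-w$. Since $S$ is symmetric, so is $\wt{S}=\tau(S)$, and we have the identity $W_n(\wt{S})=\{n-w:w\in W_n(S)\}$.

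For part (a), the cardinality $|W_n(\wt{S})|=|W_n(S)|$ is immediate since $w\mapsto n-w$ is a bijection on $[0,n]$. The main point is therefore to show $\mu_n(\wt{S})=\mu_n(S)$. The key observation is that each peripheral index set $W_{n,i}=[0,i-1]\cup[n-i+1,n]$ is itself preserved setwise by the involution $w\mapsto n-w$ on $[0,n]$. Combined with the identity for $W_n(\wt{S})$ above, this yields $W_{n,i}\subseteq W_n(\wt{S})$ iff $W_{n,i}\subseteq W_n(S)$, hence $\mu_n(\wt{S})=\mu_n(S)$. It then follows that $\Lambda_n(\wt{S})=|W_n(\wt{S})|-\mu_n(\wt{S})=\Lambda_n(S)$.

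For part (b), I would verify that $\tau$ preserves the separation property defining index complexity. For any $a,b\in\{0,1\}^n$ and any $I\subseteq[n]$, one has $a_I=b_I$ iff $\tau(a)_I=\tau(b)_I$, because coordinate-wise complementation is a bijection on $\{0,1\}^I$. Therefore, a pair $(I,a)$ with $|I|=k$ witnessing that some $a\in S$ is separated from all other elements of $S$ translates exactly to the pair $(I,\tau(a))$ witnessing the same for $\wt{S}$, and the correspondence is symmetric (since $\tau^2=\mathrm{id}$). This gives $r_n(\wt{S})\le r_n(S)$ and, applying the same argument to $\wt{S}$, the reverse inequality.

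Neither direction presents a real obstacle; both follow from unpacking the definitions and using that $\tau$ is a weight-reversing involution. The only routine check worth emphasizing is the invariance of each $W_{n,i}$ under $w\mapsto n-w$, which is built into the symmetric definition $W_{n,i}=[0,i-1]\cup[n-i+1,n]$.
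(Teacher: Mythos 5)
Your proof is correct, but it takes a different route from the paper's. The paper explicitly declines to argue ``by carefully following the definitions'' and instead derives the fact from its interval machinery: it records that complementation sends $\outint(S)=J_{n,a,b}$ to $J_{n,n-b,n-a}$ and likewise for $\innint$, and then invokes Proposition~\ref{pro:index-complexity-symmetric} (which identifies $r_n(S)$ with $\out_n(S)$) and Fact~\ref{fact:lambda} (which identifies $\Lambda_n(S)$ with $\inn_n(S)$), noting that $\out_n$ and $\inn_n$ are unchanged under $(a,b)\mapsto(n-b,n-a)$. You instead work directly from the definitions: for (a) you use that $W_n(\wt{S})=\{n-w:w\in W_n(S)\}$ and that each $W_{n,i}$ is setwise invariant under $w\mapsto n-w$, giving $\mu_n(\wt{S})=\mu_n(S)$; for (b) you use that $a_I=b_I$ iff $\tau(a)_I=\tau(b)_I$, so separating witnesses transport bijectively. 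Both arguments are sound and of comparable length. Your version is more elementary and self-contained: it does not lean on Proposition~\ref{pro:index-complexity-symmetric}, which is a substantive result proved later in the paper (and whose proof itself contains a ``without loss of generality $a\ge n-b$'' step of exactly the complementation-symmetry flavour), so the direct argument also sidesteps any worry about the order of dependencies. The paper's version, by contrast, buys brevity once the interval formalism is in place and reinforces the dictionary between complementation and the reflection $(a,b)\mapsto(n-b,n-a)$ on peripheral intervals, which is reused elsewhere.
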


\paragraph*{\emph{Blockwise} symmetric sets.}  Now fix a \emph{block decomposition} of the hypercube as \(\{0,1\}^N=\{0,1\}^{n_1}\times\cdots\times\{0,1\}^{n_k}\).  Let \(S\subseteq\{0,1\}^N\).  We say \(S\) is \tsf{\(k\)-wise symmetric} if
\begin{align*}
	&\big((x_{1,1},\ldots,x_{1,n_1}),\ldots,(x_{k,1},\ldots,x_{k,n_k})\big)\in S\tx{ and }(\sigma_1,\ldots,\sigma_k)\in\mf{S}_{n_1}\times\cdots\times\mf{S}_{n_k}\\
	\implies\quad&\big((x_{1,\sigma_1(1)},\ldots,x_{1,\sigma_1(n_1)}),\ldots,(x_{k,\sigma_k(1)},\ldots,x_{k,\sigma_k(n_k)})\big)\in S.
\end{align*}
It follows immediately that \(S\) is \(k\)-wise symmetric if and only if
\begin{align*}
	&(x_1,\ldots,x_k)\in S,\,(y_1,\ldots,y_k)\in\{0,1\}^N,\tx{ and }|y_i|=x_i\tx{ for all }i\in[k]\\
	\implies\quad&(y_1,\ldots,y_k)\in S.
\end{align*}
In this case, we denote \(W_{(n_1,\ldots,n_k)}(S)=\{(|x_1|,\ldots,|x_k|):(x_1,\ldots,x_k)\in S\}\subseteq[0,n_1]\times\cdots\times[0,n_k]\).  So the \(k\)-wise symmetric set \(S\) is completely determined by \(W_{(n_1,\ldots,n_k)}(S)\).  For each \(j\in[k]\), let \(S_j\subseteq\{0,1\}^{n_j}\) denote the \tsf{\(j\)-th projection} of \(S\), that is, \(S_j=\{x_j\in\{0,1\}^{n_j}:(x_1,\ldots,x_k)\in S\}\).  So we clearly have \(W_{(n_1,\ldots,n_k)}(S)\subseteq W_{n_1}(S_1)\times\cdots W_{n_k}(S_k)\).  We say \(S\) is a \tsf{\(k\)-wise grid} if \(W_{(n_1,\ldots,n_k)}(S)=W_{n_1}(S_1)\times\cdots\times W_{n_k}(S_k)\).  We say a \(S\) is a \tsf{\(k\)-wise layer} if \(|W_{(n_1,\ldots,n_k)}(S)|=1\), or equivalently, each \(S_j\) is a layer.  It is immediate that a subset of a hypercube is \(k\)-wise symmetric if and only if it is a union of some collection of \(k\)-wise layers.

Consider the two obvious total orders \(\le\) and \(\le'\) on \(\mb{N}\) defined by
\[
0<1<2<3<\cdots\quad\tx{and}\quad0>'1>'2>'3>'\cdots
\]
Let \(\ms{T}=\{\le,\le'\}\).  Let \(S\subseteq\{0,1\}^N\) be \(k\)-wise symmetric.  Fix arbitrary total orders \(\le_j\in\ms{T}\) on \(W_{n_j}(S_j)\) for each \(j\in[k]\), and consider the induced partial order \(\preceq\) on \(W_{n_1}(S_1)\times\cdots\times W_{n_k}(S_k)\).  We define \(S\) to be \tsf{pseudo downward closed (PDC)} if \(W_{(n_1,\ldots,n_k)}(S)\) is downward closed in \(W_{n_1}(S_1)\times\cdots\times W_{n_k}(S_k)\).  Further, for all \(j\in[k]\), enumerate \(W_{n_j}(S_j)=\{w_{j,0}<_j\cdots<_jw_{j,q_j}\}\), and for each \(z_j\in[0,q_j]\), define the symmetric set \([S]_{j,z_j}\subseteq\{0,1\}^{n_j}\) by \(W_{n_j}([S]_{j,z_j})=\{w_{j,0}<_j\cdots<_jw_{j,z_j}\}\).  Then define
\[
\mc{N}(S)=\{(z_1,\ldots,z_k)\in\mb{N}^k:(w_{1,z_1},\ldots,w_{k,n_k})\in W_{(n_1,\ldots,n_k)}(S)\}.
\]
It is immediate that the following are both equivalent conditions to \(S\) being PDC.
\begin{itemize}
	\item  \(\mc{N}(S)\) is downward closed in \(\mb{N}^k\) with respect to the natural order (also denoted by \(\le\)).
	\item  \(W_{n_1}([S]_{1,z_1})\times\cdots\times W_{n_k}([S]_{k,z_k})\subseteq W_{(n_1,\ldots,n_k)}(S)\) for each \((z_1,\ldots,z_k)\in \mc{N}(S)\).
\end{itemize}
We will also need two simple indexing sets in our results.  We denote
\begin{align*}
\outext(S)&\coloneqq\outext_\le(\mc{N}(S))=\{(z_1,\ldots,z_k)\in\mb{N}^k:(w_{1,z_1},\ldots,w_{k,z_k})\in\outext_{\preceq}(W_{(n_1,\ldots,n_k)}(S))\},\\
\innext(S)&\coloneqq\innext_\le(\mc{N}(S))=\{(z_1,\ldots,z_k)\in\mb{N}^k:(w_{1,z_1},\ldots,w_{k,z_k})\in\innext_{\preceq}(W_{(n_1,\ldots,n_k)}(S))\}.
\end{align*}

\subsection{Polynomials, multiplicities, hyperplanes, and covers}\label{subsec:prelims-vanishing}

We will work with the polynomial ring \(\mb{R}[\mb{X}]\), where \(\mb{X}=(X_1,\ldots,X_n)\) are the indeterminates.  We are interested in higher order vanishing properties of polynomials.  Let \(P(\mb{X})\in\mb{R}[\mb{X}]\).  For any \(\alpha=(\alpha_1,\ldots,\alpha_n)\in\mb{N}^n\), denote \(|\alpha|=\alpha_1+\cdots+\alpha_n\).  We will denote the \tsf{\(\alpha\)-th order partial derivative} of \(P(\mb{X})\) by \(\partial^\alpha P(\mb{X})\), that is,
\[
\partial^\alpha P(\mb{X})\coloneqq\frac{\partial^{|\alpha|}P(\mb{X})}{\partial X_1^{\alpha_1}\cdots\partial X_n^{\alpha_n}}.
\]
For any \(t\ge0\) and \(a\in\mb{R}^n\), we define the \tsf{multiplicity of \(P(\mb{X})\) at \(a\)} as follows: we define \(\mult(P(\mb{X}),a)\ge t\) if \(\partial^\alpha P(a)=0\), for all \(\alpha\in\mb{N}^n,\,|\alpha|<t\).  Therefore, we get \(\mult(P(\mb{X}),a)=t\) if \(\mult(P(\mb{X}),a)\ge t\) and \(\partial^\alpha P(a)\ne0\) for some \(\alpha\in\mb{N}^n\) with \(|\alpha|=t\).

An \tsf{affine hyperplane} in \(\mb{R}^n\) is any set of the form \(K+v\), where \(K\subseteq\mb{R}^n\) is a vector subspace with \(\dim(K)=n-1\), and \(v\in\mb{R}^n\).  In the rest of the paper, we will drop the adjective `affine' and simply refer to these as hyperplanes.  A set \(H\subseteq\mb{R}^n\) is a hyperplane if and only if \(H=\mc{Z}(P)\coloneqq\{a\in\mb{R}^n:P(a)=0\}\) for some nonzero polynomial \(P(\mb{X})\in\mb{R}[\mb{X}]\) with \(\deg(P)=1\).  In fact, we will identify \(H\) with its defining affine linear polynomial, and denote \(P(\mb{X})\) by \(H(\mb{X})\).  So according to the context (which will be obvious), \(H(\mb{X})\) will either denote the hyperplane as a subset of \(\mb{R}^n\) or the defining affine linear polynomial.  Similarly, if \(\mc{H}(\mb{X})=\{H_1(\mb{X}),\ldots,H_k(\mb{X})\}\) is a family of hyperplanes, we may also abuse notation and denote the corresponding defining polynomial by \(\mc{H}(\mb{X})=H_1(\mb{X})\cdots H_k(\mb{X})\).  For our concern, the family \(\mc{H}(\mb{X})\) will be a multiset, and \(|\mc{H}(\mb{X})|\) will denote the multiset cardinality of the family, that is, the number of hyperplanes counted with repetition.

We are interested in covering\footnote{We say a polynomial \(P\) covers a point \(a\) if \(a\in\mc{Z}(P)\).  Similarly, we say \(P\) covers a point \(a\) with multiplicity at least \(t\) if \(a\in\mc{Z}^t(P)\).} subsets of the hypercube \(\{0,1\}^n\) by polynomials and families of hyperplanes.  Let \(S\subsetneq\{0,1\}^n\), and consider \emph{multiplicity parameters} \(t\ge1,\,\ell\in[0,t-1]\).  We define
\begin{itemize}
	\item  a nonzero polynomial \(P(\mb{X})\in\mb{R}[\mb{X}]\) to be a \tsf{\((t,\ell)\)-exact polynomial cover} for \(S\) if
	\begin{align*}
		&\mult(P(\mb{X}),a)\ge t\quad\tx{for all }a\in S,\\
		\tx{and}\quad&\mult(P(\mb{X}),b)=\ell\quad\tx{for all }b\in\{0,1\}^n\setminus S.
	\end{align*}
		
	\item  a finite multiset of hyperplanes \(\mc{H}(\mb{X})\) in \(\mb{R}^n\) to be a \tsf{\((t,\ell)\)-exact hyperplane cover} for \(S\) if
	\begin{align*}
		&|\{H(\mb{X})\in\mc{H}(\mb{X}):H(a)=0\}|\ge t\quad\tx{for all }a\in S,\\
		\tx{and}\quad&|\{H(\mb{X})\in\mc{H}(\mb{X}):H(b)=0\}|=\ell\quad\tx{for all }b\in\{0,1\}^n\setminus S.
	\end{align*}
	This implies that \(\mc{H}(\mb{X})\) is also a \((t,\ell)\)-exact polynomial cover for \(S\).
\end{itemize}
Let \(\EHC_n^{(t,\ell)}(S)\) denote the minimum size of a \((t,\ell)\)-exact hyperplane cover for \(S\), and let \(\EPC_n^{(t,\ell)}(S)\) denote the minimum degree of a \((t,\ell)\)-exact polynomial cover for \(S\).  The definitions immediately imply that \(\EHC_n^{(t,\ell)}(S)\ge\EPC_n^{(t,\ell)}(S)\).

\paragraph*{A covering result.}  In the results of Alon and F\"uredi~\cite{alon-furedi} (Theorem~\ref{thm:alon-furedi}), as well as Sauermann and Wigderson~\cite{sauermann-wigderson-2022-covering} (Theorem~\ref{thm:sauermann-wigderson}), there is nothing sacrosanct about the origin; one could instead choose to avoid any single point.  We will use these version of the results, and therefore state it here.
\begin{theorem}[{\cite{alon-furedi}}]\label{thm:alon-furedi-again}
	For any \(a\in\{0,1\}^n\), we have
	\[
	\EHC_n^{(1,0)}(\{0,1\}^n\setminus\{a\})=\EPC_n^{(1,0)}(\{0,1\}^n\setminus\{a\})=n.
	\]
\end{theorem}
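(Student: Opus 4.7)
The plan is to reduce the statement for an arbitrary vertex $a \in \{0,1\}^n$ to the case $a = 0^n$ already established in Theorem~\ref{thm:alon-furedi}, by applying a suitable coordinate transformation that is an involution on the hypercube and preserves polynomial degrees as well as hyperplane counts.

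More concretely, I would define the affine transformation $T \colon \{0,1\}^n \to \{0,1\}^n$ by $T(x)_i = x_i$ if $a_i = 0$ and $T(x)_i = 1 - x_i$ if $a_i = 1$; this is a bijection (in fact an involution) with $T(a) = 0^n$. Correspondingly, for indeterminates I would set $Y_i(\mathbf{X}) = X_i$ or $Y_i(\mathbf{X}) = 1 - X_i$ according to $a_i$, and for any polynomial $P(\mathbf{X}) \in \mathbb{R}[\mathbf{X}]$ define $\widetilde{P}(\mathbf{X}) = P(Y_1(\mathbf{X}), \ldots, Y_n(\mathbf{X}))$. Since each $Y_i$ is affine linear in $X_i$ alone, we have $\deg(\widetilde{P}) = \deg(P)$, and $\widetilde{P}(x) = P(T(x))$ for every $x \in \{0,1\}^n$. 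In particular, the transformation sends hyperplanes to hyperplanes of the same count.

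The key observation is then that $P$ is a $(1,0)$-exact polynomial cover for $\{0,1\}^n \setminus \{a\}$ if and only if $\widetilde{P}$ is a $(1,0)$-exact polynomial cover for $\{0,1\}^n \setminus \{0^n\}$: for $x \neq 0^n$ we have $T(x) \neq a$ so $\widetilde{P}(x) = P(T(x)) = 0$, while $\widetilde{P}(0^n) = P(T(0^n)) = P(a) \neq 0$. The same bijection applies at the level of hyperplane multisets, with $\mathcal{H}(\mathbf{X}) \mapsto \widetilde{\mathcal{H}}(\mathbf{X}) = \{H(Y_1(\mathbf{X}), \ldots, Y_n(\mathbf{X})) : H \in \mathcal{H}\}$ preserving multiset size and the exact-cover property. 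Since $T$ is an involution, the correspondence runs both ways, yielding
\[
\EHC_n^{(1,0)}(\{0,1\}^n \setminus \{a\}) = \EHC_n^{(1,0)}(\{0,1\}^n \setminus \{0^n\})
\]
and the analogous identity for $\EPC_n^{(1,0)}$.

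An application of Theorem~\ref{thm:alon-furedi} then finishes the argument, giving the common value $n$. There is no real obstacle here; the only thing to check carefully is that the involution on indeterminates preserves both the degree of polynomials and the hyperplane structure, which follows directly since each $Y_i$ is an affine function of $X_i$ only.
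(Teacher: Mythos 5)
Your proposal is correct and is exactly the formalization of what the paper intends: the paper states this version without proof, remarking only that ``there is nothing sacrosanct about the origin,'' and your coordinate involution $X_i\mapsto 1-X_i$ on the support of $a$ (which preserves degrees, sends hyperplanes to hyperplanes, and biject covers of $\{0,1\}^n\setminus\{a\}$ with covers of $\{0,1\}^n\setminus\{0^n\}$) is precisely that symmetry argument made explicit, after which Theorem~\ref{thm:alon-furedi} finishes the job.
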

\begin{theorem}[{\cite{sauermann-wigderson-2022-covering}}]\label{thm:sauermann-wigderson-again}
	For all \(t\ge1,\,\ell\in[0,t-1]\), and any \(a\in\{0,1\}^n\), we have
	\[
	\EPC_n^{(t,\ell)}(\{0,1\}^n\setminus\{a\})=\begin{cases}
		n+2t-2&\tx{if}\n\ell=t-1,\\
		n+2t-3&\tx{if}\n\ell<t-1\le\big\lfloor\frac{n+1}{2}\big\rfloor.
	\end{cases}
	\]
\end{theorem}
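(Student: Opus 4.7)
The plan is to deduce Theorem~\ref{thm:sauermann-wigderson-again} directly from Theorem~\ref{thm:sauermann-wigderson} by exhibiting a degree-preserving, multiplicity-preserving bijection between $(t,\ell)$-exact polynomial covers for $\{0,1\}^n \setminus \{a\}$ and those for $\{0,1\}^n \setminus \{0^n\}$. Since the original theorem handles the point $0^n$, all the work is in checking that switching to an arbitrary vertex $a$ of the hypercube is a cosmetic change.

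First, I would define the affine coordinate transformation $\varphi \colon \mb{R}^n \to \mb{R}^n$ by
\[
\varphi(x)_i = \begin{cases} x_i & \tx{if } a_i = 0, \\ 1 - x_i & \tx{if } a_i = 1, \end{cases} \qquad i \in [n].
\]
Then $\varphi$ is an involution on $\mb{R}^n$ that restricts to a bijection of $\{0,1\}^n$, sends $a \mapsto 0^n$, and hence induces a bijection $\{0,1\}^n \setminus \{a\} \to \{0,1\}^n \setminus \{0^n\}$. Pulling back polynomials along $\varphi$ gives an $\mb{R}$-algebra automorphism $\Phi \colon \mb{R}[\mb{X}] \to \mb{R}[\mb{X}]$, defined by $\Phi(P)(\mb{X}) = P(\varphi(\mb{X}))$, which preserves degree because $\varphi$ is affine linear with unit coefficients on each coordinate.

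Next, I would verify that $\Phi$ preserves multiplicities in the precise sense needed: for every $P \in \mb{R}[\mb{X}]$ and every $b \in \mb{R}^n$,
\[
\mult(\Phi(P)(\mb{X}), b) = \mult(P(\mb{X}), \varphi(b)).
\]
This is immediate from the chain rule, since $\varphi$ is an invertible affine change of variables; any partial derivative $\partial^\alpha \Phi(P)$ is an $\mb{R}$-linear combination of $\Phi(\partial^\beta P)$ with $|\beta| = |\alpha|$, and vice versa, so the set of orders at which all partial derivatives vanish is preserved under $\varphi$.

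Combining these two observations, $P(\mb{X})$ is a $(t,\ell)$-exact polynomial cover for $\{0,1\}^n \setminus \{a\}$ if and only if $\Phi(P)(\mb{X})$ is a $(t,\ell)$-exact polynomial cover for $\{0,1\}^n \setminus \{0^n\}$; and the two polynomials have the same degree. Consequently
\[
\EPC_n^{(t,\ell)}(\{0,1\}^n \setminus \{a\}) = \EPC_n^{(t,\ell)}(\{0,1\}^n \setminus \{0^n\}),
\]
and the desired formula follows by invoking Theorem~\ref{thm:sauermann-wigderson}. There is no real obstacle here; the only thing to be careful about is the verification that the affine substitution preserves vanishing multiplicity, which is a standard chain-rule computation.
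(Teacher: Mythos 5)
Your proposal is correct and is precisely the formalization of what the paper leaves implicit: the paper simply remarks that ``there is nothing sacrosanct about the origin'' and cites the $a=0^n$ case (Theorem~\ref{thm:sauermann-wigderson}), which is exactly the coordinate-flip reduction you carry out. The degree- and multiplicity-preservation checks for the involution $\varphi$ are the right (and only) details to verify, and your chain-rule argument for them is sound.
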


\paragraph*{Nondegenerate polynomial and hyperplane covers for the \emph{blockwise} hypercube.}  Fix a block decomposition of the hypercube \(\{0,1\}^N=\{0,1\}^{n_1}\times\cdots\times\{0,1\}^{n_k}\).  We will work with the polynomial ring \(\mb{R}[\mb{X}]\), where \(\mb{X}=(\mb{X}_1,\ldots,\mb{X}_k)\), and \(\mb{X}_j\) is the set of indeterminates for the \(j\)-th block.

We are interested in covering subsets of the hypercube \(\{0,1\}^N\) by polynomials and families of hyperplanes.  In this context, our proof techniques work under some nondegeneracy conditions.  Let \(S\subsetneq\{0,1\}^N\), and consider \emph{multiplicity parameters} \(t\ge1,\,\ell\in[0,t-1]\).  We define
\begin{itemize}
	\item  a nonzero polynomial \(P(\mb{X})\in\mb{R}[\mb{X}]\) to be a \tsf{\((t,\ell)\)-block exact polynomial cover} for \(S\) if
	\begin{enumerate}[(a)]
		\item  for every point \(a\in S\), we have \(\mult(P(\mb{X}),a)\ge t\).
		\item  for each \(j\in[k]\), and every point \((a,\wt{a})\in\{0,1\}^N\setminus S\) with \(a\in\{0,1\}^{n_1}\times\cdots\times\{0,1\}^{n_{j-1}}\times\{0,1\}^{n_{j+1}}\times\cdots\times\{0,1\}^{n_k},\,\wt{a}\in\{0,1\}^{n_j}\), we have \(\mult(P(a,\mb{X}_j),\wt{a})=\ell\).
	\end{enumerate}

	\item  a finite multiset of hyperplanes \(\mc{H}(\mb{X})\) in \(\mb{R}^N\) to be a \tsf{\((t,\ell)\)-block exact hyperplane cover} for \(S\) if
	\begin{enumerate}[(a)]
		\item  for every \(a\in S\), we have \(|\{H(\mb{X})\in\mc{H}(\mb{X}):H(a)=0\}|\ge t\).
		\item  for every \(b\in\{0,1\}^N\setminus S\), we have \(|\{H(\mb{X})\in\mc{H}(\mb{X}):H(b)=0\}|=\ell\).
		\item  for each \(j\in[k]\), and every \(a\in\{0,1\}^{n_1}\times\cdots\times\{0,1\}^{n_{j-1}}\times\{0,1\}^{n_{j+1}}\times\cdots\times\{0,1\}^{n_k}\), we have \(|\mc{H}(a,\mb{X}_j)|=|\mc{H}(\mb{X})|\).
		
		(In other words, no two hyperplanes in the family \emph{collapse} into one, upon restriction to any single block.)
	\end{enumerate}
\end{itemize}
Let \(\bEHC_{(n_1,\ldots,n_k)}^{(t,\ell)}(S)\) denote the minimum size of a \((t,\ell)\)-block exact hyperplane cover for \(S\), and let \(\bEPC_{(n_1,\ldots,n_k)}^{(t,\ell)}(S)\) denote the minimum degree of a \((t,\ell)\)-block exact polynomial cover for \(S\).  The definitions immediately imply that every \((t,\ell)\)-block exact hyperplane cover is also a \((t,\ell)\)-block exact polynomial cover, and so \(\bEHC_{(n_1,\ldots,n_k)}^{(t,\ell)}(S)\ge\bEPC_{(n_1,\ldots,n_k)}^{(t,\ell)}(S)\).  For completeness, we give a quick proof in Appendix~\ref{app:bEHC-bEPC}.  Further, it is trivial that \(\bEHC_{(n_1,\ldots,n_k)}^{(t,\ell)}(S)\ge\EHC_N^{(t,\ell)}(S)\) and \(\bEPC_{(n_1,\ldots,n_k)}^{(t,\ell)}(S)\ge\EPC_N^{(t,\ell)}(S)\).

\subsection{Peripheral intervals, and inner and outer intervals of symmetric sets}\label{subsec:prelims-inner-outer-peripheral}

For any \(a\in[-1,n-1],\,b\in[1,n+1],\,a<b\), denote the set of weights \(I_{n,a,b}=[0,a]\cup[b,n]\), and we say a \tsf{peripheral interval} is the symmetric set \(J_{n,a,b}\subseteq\{0,1\}^n\) defined by \(W_n(J_{n,a,b})=I_{n,a,b}\).  Here, we have the convention \([0,-1]=[n+1,n]=\emptyset\).  In other words, a peripheral interval \(J_{n,a,b}\) could be either (i)\n\emph{one-sided}, that is, one or both of the weight intervals \([0,a],\,[b,n]\) could be empty (\(a=-1\) or \(b=n+1\) or both), or (ii)\n\emph{two-sided}, that is, both the weight intervals \([0,a],\,[b,n]\) are nonempty (\(a\ge1\) and \(b\le n\)).

Now let \(S\subseteq\{0,1\}^n\) be a symmetric set.
\begin{itemize}
	\item  If \(S\subsetneq\{0,1\}^n\), then the \tsf{inner interval} of \(S\), denoted by \(\innint(S)\), is defined to be the peripheral interval \(J_{n,a,b}\subseteq\{0,1\}^n\) of maximum size such that \(J_{n,a,b}\subseteq S\).  It is easy to check that \(\innint(S)\) is unique.  Further, we define \(\innint(\{0,1\}^n)=J_{n,\lfloor n/2\rfloor,\lfloor n/2\rfloor+1}\).
	
	\item  Let \(\mc{O}(S)\) be the collection of all peripheral intervals \(J_{n,a,b}\) such that \(S\subseteq J_{n,a,b}\) and \(I_{n,a,b}=W_n(J_{n,a,b})\) has minimum size.  	It is easy to see that \(\mc{O}(S)\) can contain several peripheral intervals; the following is an example.
	\begin{example}\label{ex:OS-nonunique}
		Let \(n\) be even, and choose \(W_n(S)=\{w\in[0,n]:w\tx{ is even}\}\).  Then for any even \(w\in[0,n]\), we have \(I_{n,w,w+2}=[0,w]\cup[w+2,n],\,|I_{n,w,w+2}|=n\) and \(S\subseteq J_{n,w,w+2}\).  Further, for any peripheral interval \(J_{n,a,b}\supseteq S\), it is immediate that \(|b-a|\le2\), and so \(|I_{n,a,b}|\ge n\).  Thus, \(\mc{O}(S)=\{J_{n,w,w+2}:w\in[0,n]\tx{ is even}\}\).
	\end{example}
	
	Moving on, consider the function \(\lambda_S:\mc{O}(S)\to\mb{N}\) defined by
	\[
	\lambda_S(J_{n,a,b})=|a+b-n|,\quad\tx{for all }J_{n,a,b}\in\mc{O}(S).
	\]
	It is easy to check that the minimizer of \(\lambda_S\) is either a unique peripheral interval \(J_{n,a,b}\), or exactly a pair of peripheral intervals \(\{J_{n,a,b},J_{n,n-b,n-a}\}\).  The \tsf{outer interval} of \(S\), denoted by \(\outint(S)\), is defined by
	\[
	\outint(S)=\begin{cases}
		J_{n,a,b}&\tx{if }J_{n,a,b}\tx{ is the unique minimizer of }\lambda_S,\\
		J_{n,a,b}&\tx{if }\{J_{n,a,b},J_{n,n-b,n-a}\}\tx{ are minimizers of }\lambda_S\tx{ and }a>n-b.
	\end{cases}
	\]
	Therefore, \(\outint(S)\) is unique.
\end{itemize}
We will discuss more on uniqueness of inner and outer intervals, and look at some illustrations, in Appendix~\ref{app:inner-outer}.  Now define
\begin{align*}
	\inn_n(S)&=(\min\{a,n-b\}+1)+|W_n(S)\setminus W_{n,\min\{a,n-b\}+1}|&&\tx{where }J_{n,a,b}=\innint(S),\\
	\tx{and }\out_n(S)&=a+n-b+1=|I_{n,a,b}|-1&&\tx{where }J_{n,a,b}=\outint(S).
\end{align*}
\begin{remark}\label{rem:inner-outer-peripheral}
	Let \(J_{n,a,b}\subseteq\{0,1\}^n\) be a peripheral interval.  It is trivially true that \(\innint(J_{n,a,b})=\outint(J_{n,a,b})=J_{n,a,b}\).  Further, it is easy to check that
	\begin{align*}
		\innint(\{0,1\}^n\setminus J_{n,a,b})&=\begin{cases}
			\emptyset&\tx{if }a\ge1,\,b\le n,\\
			J_{n,a+1,n}&\tx{if }b=n+1,\\
			J_{n,0,b-1}&\tx{if }a=-1,
		\end{cases}\\
		\tx{and}\quad\outint(\{0,1\}^n\setminus J_{n,a,b})&=\begin{cases}
			J_{n,-1,a+1}&\tx{if }a\ge n-b,\\
			J_{n,b-1,n+1}&\tx{if }a<n-b.
		\end{cases}
	\end{align*}
	Therefore,
	\begin{align*}
		\inn_n(J_{n,a,b})&=\max\{a,n-b\}+1,&\inn_n(\{0,1\}^n\setminus J_{n,a,b})&=b-a-1,\\
		\out_n(J_{n,a,b})&=a+n-b+1,&\out_n(\{0,1\}^n\setminus J_{n,a,b})&=\min\{n-a,b\}-1.
	\end{align*}
\end{remark}
\noindent The following interesting and important observations are immediate from Remark~\ref{rem:inner-outer-peripheral}, and the definitions.
\begin{observation}\label{obs:peripheral-tight}
	\begin{enumerate}[(a)]
		\item  For any peripheral interval \(J_{n,a,b}\subseteq\{0,1\}^n\), we have
		\[
		\inn_n(J_{n,a,b})+\out_n(\{0,1\}^n\setminus J_{n,a,b})=\inn_n(\{0,1\}^n\setminus J_{n,a,b})+\out_n(J_{n,a,b})=n.
		\]
		\item  For any symmetric set \(S\subseteq\{0,1\}^n\), we have \(S=\innint(S)=\outint(S)\) if and only if either \(S\) or \(\{0,1\}^n\setminus S\) is a peripheral interval.
	\end{enumerate}
	
\end{observation}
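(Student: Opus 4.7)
The plan is to deduce both parts directly from the closed-form formulas collected in Remark~\ref{rem:inner-outer-peripheral} together with the definitions of $\innint$ and $\outint$, so the argument should be essentially bookkeeping.

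For part~(a), I would substitute the four closed-form expressions from Remark~\ref{rem:inner-outer-peripheral}. The first identity reduces to $\inn_n(J_{n,a,b}) + \out_n(\{0,1\}^n\setminus J_{n,a,b}) = \max\{a,\,n-b\} + \min\{n-a,\,b\}$, and a short case split, according to whether $a \ge n-b$ or $a < n-b$, shows this equals $n$ in either case; the governing observation is that $a \ge n-b$ is equivalent to $n-a \le b$, so the $\max$ and the $\min$ are always attained on matching sides and telescope to $n$. The second identity is purely arithmetic and needs no case split: $(b-a-1) + (a+n-b+1) = n$ directly.

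For part~(b), the forward direction is immediate: since by definition $\innint(S)$ is always a peripheral interval, $S = \innint(S)$ already forces $S$ itself to be a peripheral interval, in particular verifying the disjunction. For the converse I would split into cases. If $S$ is itself a peripheral interval $J_{n,a,b}$, then $S$ is trivially both the largest peripheral interval contained in $S$ and the smallest peripheral interval containing $S$, hence $\innint(S) = \outint(S) = S$ straight from the definitions (as already recorded in Remark~\ref{rem:inner-outer-peripheral}). If instead $\{0,1\}^n \setminus S$ is a peripheral interval, I would note that in the one-sided subcases ($a = -1$ or $b = n+1$) the set $S$ is itself a peripheral interval with weight set $[0,b-1]$ or $[a+1,n]$ respectively, which reduces back to the previous case.

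I do not anticipate any genuine obstacle here; both parts are short once the closed forms are in hand. The only mild care needed is to keep straight which of the four formulas in Remark~\ref{rem:inner-outer-peripheral} applies to a peripheral interval and which to its complement, so that the case split in part~(a) is carried consistently across the two terms of each identity, and in part~(b) the reduction from ``$\{0,1\}^n \setminus S$ is a peripheral interval'' to ``$S$ is a peripheral interval'' is handled by reading off the weight sets on each side.
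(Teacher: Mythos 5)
Part (a) of your proposal is correct and is exactly the computation the paper has in mind: the authors declare the observation ``immediate from Remark~\ref{rem:inner-outer-peripheral} and the definitions,'' and your two-line verification (the $\max$/$\min$ case split for the first identity, pure arithmetic for the second) is precisely that.

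Part (b) has a genuine gap, and it sits exactly where your case analysis stops. In the converse direction you treat ``$\{0,1\}^n\setminus S$ is a peripheral interval'' only in the one-sided subcases $a=-1$ or $b=n+1$, where $S$ is indeed again a peripheral interval and the reduction works. But you never address the two-sided subcase, and there the reduction fails: if $\{0,1\}^n\setminus S=J_{n,a,b}$ with both $[0,a]$ and $[b,n]$ nonempty, then $W_n(S)=[a+1,b-1]$ is a ``middle interval,'' which contains neither weight $0$ nor weight $n$, so the only peripheral interval contained in $S$ is $\emptyset$ and hence $\innint(S)=\emptyset\ne S$ whenever $S\ne\emptyset$. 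Concretely, for $n=4$ and $W_4(S)=\{2\}$ the right-hand side of the biconditional holds ($\{0,1\}^4\setminus S=J_{4,1,3}$) while the left-hand side fails. So the statement as literally written is false in this case, and no completion of your argument can rescue it verbatim. What is actually true, and what your own forward-direction argument already shows, is that $S=\innint(S)=\outint(S)$ holds if and only if $S$ \emph{itself} is a peripheral interval; the disjunctive form of the observation only makes sense if one applies the condition to whichever of $S$ or $\{0,1\}^n\setminus S$ is the peripheral interval, which is how it is invoked in the proofs of Proposition~\ref{pro:inner-outer}. You should either prove the corrected statement or explicitly note the reformulation; silently claiming the two-sided case ``reduces back to the previous case'' is the missing (and unfillable) step.
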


%
%
%
%
%
%
%


\section{Index complexity of symmetric and PDC \(k\)-wise symmetric sets}\label{sec:index-complexity-PDC}

\subsection{Inner and outer intervals of symmetric sets}\label{subsec:inner-outer-symmetric}

Let us first prove Proposition~\ref{pro:inner-outer}, which relates the inner and outer intervals of symmetric sets.  We will give two proofs, one combinatorial and another via the polynomial method.  We mention the statement again, for convenience.  
\begin{repproposition}[\ref{pro:inner-outer}]
	For any nonempty symmetric set \(S\subseteq\{0,1\}^n\), we have
	\[
	\inn_n(\{0,1\}^n\setminus S)+\out_n(S)\ge n.
	\]
	Further, equality holds if and only if either \(S\) or \(\{0,1\}^n\setminus S\) is a peripheral interval.
\end{repproposition}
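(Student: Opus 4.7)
My plan is to reduce the inequality to a numerical bound on the weight set $W_n(S) \subseteq [0,n]$, and then verify it by a short case analysis. First I would dispose of the degenerate case $S = \{0,1\}^n$ (which is itself the peripheral interval $J_{n,n,n+1}$) by a direct check. Otherwise, write $m = \min W_n(S)$ and $M = \max W_n(S)$. Since $m, M \in W_n(S)$ while $W_n(S)$ avoids $[0,m-1]$ and $[M+1,n]$, the peripheral interval $J_{n,a,b} \subseteq S^c$ of maximum size is unambiguously $J_{n,m-1,M+1}$, so $\innint(S^c) = J_{n,m-1,M+1}$. Unpacking the definition of $\inn_n$ and splitting on the sign of $(m+M)-n$, I would simplify to the closed form
\[
\inn_n(S^c) = \max\{M,\, n-m\} + 1 - |W_n(S)|.
\]
For the outer interval, $S \subseteq J_{n,a,b}$ is equivalent to $[a+1,b-1] \cap W_n(S) = \emptyset$, and minimizing $|I_{n,a,b}|$ amounts to maximizing $b-a$. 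Hence $\out_n(S) = n - g$, where $g$ is the maximum length among the \emph{gaps} of $W_n(S)$: the start gap $[0,m-1]$ of length $m$, the end gap $[M+1,n]$ of length $n-M$, and each internal gap between consecutive weights.

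Substituting both formulas, the desired inequality becomes the purely combinatorial statement
\[
|W_n(S)| + g \le \max\{M,\, n-m\} + 1.
\]
I would prove this by a three-way case split on the type of gap achieving $g$, exploiting the identity $|W_n(S)| = M - m + 1 - G$ where $G$ is the total length of internal gaps. If $g$ is internal, then $g \le G$, so the left side is at most $M - m + 1$; if $g = m$ (start), the left side is at most $M + 1$; if $g = n - M$ (end), the left side is at most $n - m + 1$. Each of these three upper bounds is dominated by $\max\{M, n-m\} + 1$, finishing the inequality.

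For the equality half, I would trace the chain of inequalities in each case. In the start-gap case, equality forces $G = 0$ (so $W_n(S) = [m,M]$) together with $M \ge n - m$; then $W_n(S^c) = [0,m-1] \cup [M+1,n]$, so $S^c$ is a peripheral interval. The end-gap case is symmetric and again produces a peripheral $S^c$. The internal-gap case is where I expect the main subtlety: equality there forces simultaneously $G = g$ (i.e., $W_n(S) = [m,a] \cup [b,M]$ has a single internal gap) \emph{and} $M - m = \max\{M, n-m\}$. Although this last numerical equality looks like a single constraint, it in fact packages two independent ones: $M - m \ge M$ forces $m = 0$, while $M - m \ge n - m$ forces $M = n$. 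Together, $W_n(S) = [0,a] \cup [b,n]$, so $S$ itself is peripheral. The converse direction (peripheral $S$ or $S^c$ $\Rightarrow$ equality) is immediate from Observation~\ref{obs:peripheral-tight}(a). The step I would double-check most carefully is this two-sided forcing in the internal-gap case, since it is tempting to conclude only $m = 0$ or only $M = n$ from what looks like a single saturated inequality.
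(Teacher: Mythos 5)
Your proof is correct, and it takes a genuinely different route from both of the paper's arguments. The paper's first proof is structural: it replaces \(S\) by \(J_{n,a,b}=\outint(S)\) (which preserves \(\out_n\)), proves the monotonicity statement \(\inn_n(\{0,1\}^n\setminus S)\ge\inn_n(\{0,1\}^n\setminus J_{n,a,b})\), and then falls back on Observation~\ref{obs:peripheral-tight}(a); its second proof runs the polynomial method, multiplying a degree-\(\out_n(S)\) polynomial separating \(1^a0^{n-a}\) from the rest of \(S\) by a degree-\(\inn_n(\{0,1\}^n\setminus S)\) cover of \(\{0,1\}^n\setminus S\) and invoking Alon--F\"uredi.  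You instead compute both quantities in closed form, \(\inn_n(\{0,1\}^n\setminus S)=\max\{M,n-m\}+1-|W_n(S)|\) and \(\out_n(S)=n-g\), and reduce everything to the arithmetic inequality \(|W_n(S)|+g\le\max\{M,n-m\}+1\).  Both formulas check out (note the tie-breaking in the definition of \(\outint\) is irrelevant here, since \(\out_n\) depends only on \(|I_{n,a,b}|\), which is constant on \(\mc{O}(S)\)), the three-way case split is exhaustive even when several gaps attain \(g\) (any one attaining gap gives a valid saturated chain), and your equality analysis is, if anything, more transparent than the paper's: the observation that \(M-m=\max\{M,n-m\}\) forces \emph{both} \(m=0\) and \(M=n\) is exactly the right point to flag.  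What your approach gives up is the structural insight that the inequality is really the peripheral-interval identity plus monotonicity; what it buys is a short, fully self-contained verification.  One cosmetic slip: \(\{0,1\}^n\) is not \(J_{n,n,n+1}\) (the parameter range requires \(a\le n-1\)); write it as, say, \(J_{n,0,1}\) --- the degenerate case still checks out, since \(\inn_n(\emptyset)+\out_n(\{0,1\}^n)=0+n=n\).
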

\begin{proof}[First proof]
	We note that the assertion is immediately true, by Observation~\ref{obs:peripheral-tight}(a), if either \(S\) or \(\{0,1\}^n\setminus S\) is a peripheral interval.
	
	Now suppose \(S\subseteq\{0,1\}^n\) is some nonempty symmetric set.  Let \(J_{n,a,b}=\outint(S)\).  So by definition, we get \(\out_n(S)=\out_n(J_{n,a,b})\).  It is, therefore, enough to prove \(\inn_n(\{0,1\}^n\setminus S)\ge\inn_n(\{0,1\}^n\setminus J_{n,a,b})\).
	
	Let \(J_{n,a',b'}=\innint(\{0,1\}^n\setminus S)\), and
	\[
	M_n(S)\coloneqq\{w\in W_n(\{0,1\}^n\setminus S):\min\{a',n-b'\}+1\le w\le\max\{n-a',b'\}-1\}.
	\]
	So, by definition, \(\inn_n(\{0,1\}^n\setminus S)=\min\{a',n-b'\}+1+|M_n(S)|\).  Also, since \(J_{n,a,b}=\outint(S)\), we get
	\[
	[0,n]\setminus I_{n,a,b}\subseteq I_{n,\min\{a',n-b'\},\max\{n-a',b'\}}\sqcup M_n(S).
	\]
	This immediately gives
	\begin{align}
	\inn_n(\{0,1\}^n\setminus J_{n,a,b})&\le|I_{n,\min\{a',n-b'\},\max\{n-a',b'\}}|+|M_n(S)|\notag\\
	&=\min\{a',n-b'\}+1+|M_n(S)|\notag\\
	&=\inn_n(\{0,1\}^n\setminus S).\label{eq:innint}
	\end{align}
	
	It is clear that equality is attained exactly when \(\inn_n(\{0,1\}^n\setminus S)=\inn_n(\{0,1\}^n\setminus J_{n,a,b})\).  By~(\ref{eq:innint}), this means equality is attained exactly when \(\inn_n(\{0,1\}^n\setminus J_{n,a,b})=\min\{a',n-b'\}+1+|M_n(S)|\).  This happens exactly when \(S=J_{n,a,b}=J_{n,a',b'}\), that is, \(S=\innint(S)=\outint(S)\).  By Observation~\ref{obs:peripheral-tight}(b), this is equivalent to either \(S\) or \(\{0,1\}^n\setminus S\) being a peripheral interval.
\end{proof}

\begin{proof}[Second proof]
	Let \(J_{n,a,b}=\outint(S)\).  By the minimality of size of \(I_{n,a,b}\), we have \(\{a,b\}\subseteq W_n(S)\).  Without loss of generality, assume \(a\ge n-b\).  Define \(P(\mb{X})=X_1\cdots X_a(X_{a+1}-1)\cdots(X_{a+n-b+1}-1)\).  We clearly have \(1^a0^{n-a}\in S\), and \(P(1^a0^{n-a})\ne0\).  Now consider any \(x\in S,\,x\ne1^a0^{n-a}\).  We have three cases.
	\begin{enumerate}[(C1)]
		\item  \(|x|=a,\,x\ne1^a0^{n-a}\).  Then there exists \(i\in[1,a]\) such that \(x_i=0\), so and \(P(x)=0\).
		\item  \(|x|<a\).  Then there exists \(i\in[1,a]\) such that \(x_i=0\), and s \(P(x)=0\).
		\item  \(|x|>a\), which means \(|x|\ge b\), since \(S\subseteq J_{n,a,b}\).  So \(|\{i\in[n]:x_i=0\}|<n-b+1\).  This implies that there exists \(i\in[a+1,a+n-b+1]\) such that \(x_i=1\), and so \(P(x)=0\).
	\end{enumerate}
	Consider the family of hyperplanes
	\[
	\msf{h}(\mb{X})\coloneqq\mc{H}^*_{\ol{\mu}_n(S)}(\mb{X})\sqcup\mc{H}'_{W_n(\{0,1\}^n\setminus S)\setminus W_{n,\ol{\mu}_n(S)}}(\mb{X}).
	\]
	By Lemma~\ref{lem:T}, we have \(\mc{H}^*_{\ol{\mu}_n(S)}(x)=0\) if and only if \(|x|\in W_{n,\ol{\mu}_n(S)}\).  Further, by definition, we have \(\mc{H}'_{W_n(\{0,1\}^n\setminus S)\setminus W_{n,\ol{\mu}_n(S)}}(x)=0\) if and only if \(|x|\in W_n(\{0,1\}^n\setminus S)\setminus W_{n,\ol{\mu}_n(S)}\).  Thus, we have \(\msf{h}(x)=0\) if and only if \(x\in\{0,1\}^n\setminus S\).
	
	So we conclude that the polynomial \(P\msf{h}(\mb{X})\) satisfies \(P\msf{h}(1^a0^{n-a})\ne0\), and \(P\msf{h}(x)=0\) for all \(x\in\{0,1\}^n,\,x\ne1^a0^{n-a}\).  Therefore, by Theorem~\ref{thm:alon-furedi-again}, we get \(\deg(P\msf{h})\ge n\).  Now by the definitions, we also have
	\begin{align*}
		\deg(P)&=a+n-b+1=\out_n(S),\\
		\tx{and}\quad\deg(\msf{h})&=\ol{\mu}_n(S)+|W_n(\{0,1\}^n\setminus S)\setminus W_{n,\ol{\mu}_n(S)}|=\inn_n(\{0,1\}^n\setminus S).
	\end{align*}
	Hence,
	\[
	\inn_n(\{0,1\}^n\setminus S)+\out_n(S)=\deg(\msf{h})+\deg(P)=\deg(P\msf{h})\ge n.
	\]
	
	By the definition above, \(\deg(P)=a+n-b+1=\out_n(S)\), and therefore we have shown that \(\deg(\msf{h})\ge b-a-1\).  But, again by the definition above, we have \(\deg(\msf{h})=\ol{\mu}_n(S)+|W_n(\{0,1\}^n\setminus S)\setminus W_{n,\ol{\mu}_n(S)}|\).  Thus, we have equality exactly when \(\ol{\mu}_n(S)+|W_n(\{0,1\}^n\setminus S)\setminus W_{n,\ol{\mu}_n(S)}|=b-a-1\), where \(J_{n,a,b}=\outint(S)\).  This is true if and only if \(S=J_{n,a,b}\).  By Observation~\ref{obs:peripheral-tight}(b), this is equivalent to either \(S\) or \(\{0,1\}^n\setminus S\) being a peripheral interval.
\end{proof}

\subsection{Index complexity of symmetric sets}\label{subsec:main-index-complexity-symmetric}

We will now proceed to prove Proposition~\ref{pro:index-complexity-symmetric} which characterizes the index complexity of symmetric sets.  We will need a definition and a technical lemma.  Let \(p\in\{0,1\}^n\) and denote \(I_0(p)\coloneqq\{i\in[n]:p_i=0\},\,I_1(p)\coloneqq\{i\in[n]:p_i=1\}\).  So \(|I_0(p)|=n-|p|\) and \(|I_1(p)|=|p|\).  For any \(I_0\subseteq I_0(p),\,I_1\subseteq I_1(p)\), we define the \tsf{separation of \(p\) with respect to \((I_0,I_1)\)}, denoted by \(\sep(p,I_0,I_1)\subseteq\{0,1\}^n\), to be the maximal symmetric set such that for every \(x\in\sep(p,I_0,I_1)\), we have \(x_{I_0\sqcup I_1}\ne p_{I_0\sqcup I_1}\).  We will refer to the special case \(\sep(p)\coloneqq\sep(p,I_0(p),I_1(p))\) as simply the \tsf{separation of \(p\)}.
\begin{remark}\label{rem:sep}
	It follows by definition that \(|p|\not\in W_n(\sep(p,I_0,I_1))\), for any \(I_0\subseteq I_0(p),\,I_1\subseteq I_1(p)\).
\end{remark}
\begin{lemma}\label{lem:sym-index}
	For any \(p\in\{0,1\}^n\), and \(I_0\subseteq I_0(p),\,I_1\subseteq I_1(p)\), we have
	\[
	\sep(p,I_0,I_1)=J_{n,|I_1|-1,n-|I_0|+1}.
	\]
	In particular, we have \(\sep(p)=J_{n,|p|-1,|p|+1}\).
\end{lemma}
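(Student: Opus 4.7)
The plan is to exploit the fact that any symmetric set $S\subseteq\{0,1\}^n$ is completely determined by its weight set $W_n(S)\subseteq[0,n]$. Hence, to identify $\sep(p,I_0,I_1)$, it suffices to determine the maximal $W\subseteq[0,n]$ with the property that \emph{every} $x\in\{0,1\}^n$ with $|x|\in W$ satisfies $x_{I_0\sqcup I_1}\ne p_{I_0\sqcup I_1}$. Equivalently, I want the set of weights $w$ such that no $x$ with $|x|=w$ has $x_i=p_i$ for all $i\in I_0\sqcup I_1$.

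First, I would fix a weight $w\in[0,n]$ and ask whether there exists $x\in\{0,1\}^n$ with $|x|=w$ and $x_{I_0\sqcup I_1}=p_{I_0\sqcup I_1}$. By the definitions of $I_0(p)$ and $I_1(p)$ (and $I_0\subseteq I_0(p)$, $I_1\subseteq I_1(p)$), the requirement $x_{I_0\sqcup I_1}=p_{I_0\sqcup I_1}$ forces $x_i=0$ for $i\in I_0$ and $x_i=1$ for $i\in I_1$, thereby contributing exactly $|I_1|$ to $|x|$, while leaving the coordinates in $[n]\setminus(I_0\sqcup I_1)$ free to be set arbitrarily.

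Next, I would simply count: the free block has $n-|I_0|-|I_1|$ coordinates, so the achievable total weights $|x|$ under the constraint form exactly the interval $[|I_1|,\,n-|I_0|]$. Consequently, the weights $w$ for which \emph{no} extension exists form the complement in $[0,n]$, namely
\[
[0,\,|I_1|-1]\,\cup\,[n-|I_0|+1,\,n]\ =\ I_{n,\,|I_1|-1,\,n-|I_0|+1}.
\]
This is precisely $W_n(J_{n,|I_1|-1,n-|I_0|+1})$, which yields the first claim, and specializing to $I_0=I_0(p)$, $I_1=I_1(p)$ (so $|I_0|=n-|p|$ and $|I_1|=|p|$) gives $\sep(p)=J_{n,|p|-1,|p|+1}$. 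There is no genuine obstacle here; the only care required is the boundary bookkeeping when $I_0$ or $I_1$ is empty or when $|I_0|+|I_1|=n$, which is handled uniformly by the conventions $[0,-1]=[n+1,n]=\emptyset$ already stipulated in the definition of peripheral intervals.
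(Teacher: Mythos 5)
Your proposal is correct and follows essentially the same route as the paper's proof: both reduce the problem to identifying the weights $w$ for which some point of weight $w$ agrees with $p$ on $I_0\sqcup I_1$, and both determine this set to be exactly the interval $[|I_1|,\,n-|I_0|]$ (the paper via a WLOG normalization of $p$ and an explicit two-sided containment, you via a direct count of the free coordinates). The boundary conventions you invoke do handle the degenerate cases, so no gap remains.
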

\begin{proof}
	Without loss of generality, assume \(p=1^a0^{n-a}\), and \(I_1=[1,u]\subseteq[1,a],\,I_0=[n-v+1,n]\), for some \(u\in[0,a],\,v\in[0,n-a]\).  So \(|I_1|=u,\,|I_0|=v\).  We observe the following.
	\begin{enumerate}[(P1)]
		\item  For any \(x=1^{a'}y\) with \(a'\ge u\) and \(y\in\{0,1\}^{n-a'}\), we have \(x_{I_1}=p_{I_1}=1^u\).
		\item  For any \(x=y0^{b'}\) with \(b'\ge v\) and \(y\in\{0,1\}^{n-b'}\), we have \(x_{I_0}=p_{I_0}=0^v\).
	\end{enumerate}
	Combining the above two observations, we get that for any \(x=1^{a'}y0^{b'}\) with \(a'\ge u,\,b'\ge v\) and \(y\in\{0,1\}^{n-a'-b'}\), we have \(x_{I_0\sqcup I_1}=p_{I_0\sqcup I_1}\).  Since \(\sep(p,I_0,I_1)\) is a symmetric set, this implies that
	\[
	[u,n-v]\cap W_n(\sep(p,I_0,I_1))=\emptyset,\quad\tx{that is},\quad \sep(p,I_0,I_1)\subseteq J_{n,u-1,n-v+1}.
	\]
	Now consider any \(x\in J_{n,u-1,n-v+1}\).  We have two cases.
	\begin{enumerate}[(C1)]
		\item  \(|x|\le u-1\).  Since \(|I_1|=u\), there exists \(i\in I_1\) such that \(x_i=0\), but \(p_i=1\).
		\item  \(|x|\ge n-v+1\).  Since \(|I_0|=v\), there exists \(i\in I_0\) such that \(x_i=1\), but \(p_i=0\).
	\end{enumerate}
	Hence, we conclude that \(J_{n,u-1,n-v+1}\subseteq\sep(p,I_0,I_1)\).
\end{proof}

\noindent We are now ready to prove Proposition~\ref{pro:index-complexity-symmetric}.  We mention the statement again, for convenience.
\begin{repproposition}[\ref{pro:index-complexity-symmetric}]
	For any nonempty symmetric set \(S\subseteq\{0,1\}^n\), we have \(r_n(S)=\out_n(S)\).
\end{repproposition}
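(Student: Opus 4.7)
The plan is to establish the two inequalities $r_n(S) \le \out_n(S)$ and $r_n(S) \ge \out_n(S)$ separately, with Lemma~\ref{lem:sym-index} as the main engine.

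For the upper bound, let $J_{n,a,b} = \outint(S)$. A quick minimality check shows that $a \in W_n(S)$ whenever $a \ge 0$ (else $J_{n,a-1,b}$ would strictly shrink the peripheral interval) and $b \in W_n(S)$ whenever $b \le n$. Using Fact~\ref{fact:transform}(b) I may assume $a \ge 0$. I then take $u = 1^a 0^{n-a} \in S$ and $I = [1,a] \cup [b,n]$, so $|I| = a + (n-b+1) = \out_n(S)$. Any $x \in \{0,1\}^n$ with $x_I = u_I$ satisfies $x_1 = \cdots = x_a = 1$ and $x_b = \cdots = x_n = 0$, hence $|x| \in [a, b-1]$; combined with $W_n(S) \subseteq [0,a] \cup [b,n]$, this forces any such $x \in S$ to have weight exactly $a$, and then $x = u$. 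Thus $I$ separates $u$ from $S \setminus \{u\}$, giving $r_n(S) \le \out_n(S)$.

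For the lower bound, fix any $u \in S$ and $I \subseteq [n]$ such that $v_I \ne u_I$ for every $v \in S \setminus \{u\}$, and write $I_j \coloneqq I \cap I_j(u)$ for $j \in \{0,1\}$. The symmetric set $T \coloneqq \{x \in S : |x| \ne |u|\}$ satisfies $T \subseteq \{x : x_I \ne u_I\}$, so by Lemma~\ref{lem:sym-index} we obtain $T \subseteq \sep(u, I_0, I_1) = J_{n, |I_1|-1, n-|I_0|+1}$. If $|u| \in \{0,n\}$, then $u$ is automatically the unique weight-$|u|$ point of $\{0,1\}^n$, and we directly get $W_n(S) \subseteq [0, |I_1|-1] \cup \{|u|\} \cup [n-|I_0|+1, n]$, which sits inside a peripheral interval of size $|I|+1$. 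If $0 < |u| < n$, then $u$ must also be separated from the $\binom{n}{|u|} - 1 \ge 1$ other weight-$|u|$ points of $S$; the number of $x$ with $x_I = u_I$ and $|x| = |u|$ is $\binom{n-|I|}{|u| - |I_1|}$, and demanding this to equal $1$ forces either $I \supseteq I_1(u)$ or $I \supseteq I_0(u)$. Combining either condition with the constraint on $T$ again places $W_n(S)$ inside a peripheral interval of size $|I|+1$. In every case $\out_n(S) \le |I|$, and minimizing over $(u,I)$ yields $\out_n(S) \le r_n(S)$.

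The main obstacle is the middle-weight case $0 < |u| < n$: Lemma~\ref{lem:sym-index} alone only controls the symmetric residue $T$, but $S \setminus \{u\}$ is not symmetric in the $|u|$-th layer, and one needs to separately account for the other $\binom{n}{|u|} - 1$ points of weight $|u|$ in $S$. The binomial-coefficient reduction above is what resolves this, pinning $I$ to contain either $I_1(u)$ or $I_0(u)$, after which the combined constraint on $W_n(S)$ is exactly a peripheral interval of the required size $|I|+1$.
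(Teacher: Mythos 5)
Your proof is correct, and its skeleton matches the paper's: the same explicit separating pair \((1^a0^{n-a},\,I)\) with \(|I|=a+n-b+1\) for the upper bound, and Lemma~\ref{lem:sym-index} applied to the off-layer part of \(S\) for the lower bound. The genuine difference is how you close the lower bound. The paper asserts a dichotomy \(|p|\in\{a,b\}\) together with containments of the form \(I_{n,a-1,b}\subseteq I_{n,|I_1|-1,n-|I_0|+1}\) "by minimality," but when the off-layer part \(W_n(S)\setminus\{|p|\}\) is small (in the extreme, when \(S\) is a single layer it is empty), the separation lemma alone gives no constraint on \(|I_0|,|I_1|\), and the asserted containment is exactly the point that needs proof. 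You supply that missing step explicitly: counting the weight-\(|u|\) points agreeing with \(u\) on \(I\) as \(\binom{n-|I|}{|u|-|I_1|}\) and forcing this to be \(1\) pins \(I\supseteq I_1(u)\) or \(I\supseteq I_0(u)\), after which \(W_n(S)\) visibly sits inside a peripheral interval of size \(|I|+1\) and minimality of the outer interval finishes the argument. So your route buys a self-contained and airtight treatment of the within-layer separation (which, as your single-layer sanity check shows, carries all the content in some cases), at the cost of one extra case split on \(|u|\in\{0,n\}\) versus \(0<|u|<n\). One very minor point: when you invoke Fact~\ref{fact:transform}(b) to assume \(a\ge0\), you are also implicitly using that \(\out_n\) is invariant under complementation of coordinates; this is recorded in Appendix~\ref{app:complement} but is worth citing explicitly.
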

\begin{proof}
	Let \(J_{n,a,b}\) be the outer interval of \(S\).  So \(\out_n(S)=a+n-b+1\).  By the minimality of size of \(I_{n,a,b}\), we have \(\{a,b\}\subseteq W_n(S)\).  So, in particular, \(p\coloneqq 1^a0^{n-a}\in S\).  Without loss of generality, assume \(a\ge n-b\).  Now consider any \(x\in S,\,x\ne1^a0^{n-a}\).  We have three cases.
	\begin{enumerate}[(C1)]
		\item  \(|x|=a,\,x\ne1^a0^{n-a}\).  Then there exists \(i\in[1,a]\) such that \(x_i=0\), but \(p_i=1\).
		\item  \(|x|<a\).  Then there exists \(i\in[1,a]\) such that \(x_i=0\), but \(p_i=1\).
		\item  \(|x|>a\), which means \(|x|\ge b\), since \(S\subseteq J_{n,a,b}\).  So \(|\{i\in[n]:x_i=0\}|<n-b+1\).  This implies there exists \(i\in[a+1,a+n-b+1]\) such that \(x_i=1\), but \(p_i=0\).
	\end{enumerate}
	Thus, in all three cases, there exists \(i\in[1,a+n-b+1]\) such that \(x_i\ne p_i\).  Hence, we conclude that \(r_n(S)\le a+n-b+1=\out_n(S)\).
	
	Now, in order to prove the reverse inequality, let \(p\in S\) and \(I\subseteq[n],\,r_n(S)=|I|\) such that for every \(x\in S,\,x\ne p\), we have \(x_I\ne p_I\).  Further, let \(I_0=I\cap I_0(p)\) and \(I_1=I\cap I_1(p)\).  By definition of index complexity, Lemma~\ref{lem:sym-index}, and Remark~\ref{rem:sep}, we get
	\begin{align}
	W_n(S)\setminus\{|p|\}\subseteq W_n(\sep(p,I_0,I_1))=I_{n,|I_1|-1,n-|I_0|+1}.\label{sep-containment}
	\end{align}
	Also, trivially, we have \(|I_1|\le|p|\le n -|I_0|\).  Since \(J_{n,a,b}\) is the outer interval of \(S\), we have exactly one of the two cases, by~(\ref{sep-containment}) and the minimality of size of \(I_{n,a,b}\).
	\begin{enumerate}[(C1')]
		\item  \(|p|=a\) and \(W_n(S)\setminus\{p\}\subseteq I_{n,a-1,b}\subseteq I_{n,|I_1|-1,n-|I_0|+1}\).  So \(|I_1|\ge a,\,|I_0|\ge n-b+1\).
		\item  \(|p|=b\) and \(W_n(S)\setminus\{p\}\subseteq I_{n,a,b+1}\subseteq I_{n,|I_1|-1,n-|I_0|+1}\).  So \(|I_1|\ge a+1,\,|I_0|\ge n-b\).
	\end{enumerate}
	In either of the two cases, we finally get
	\[
	r_n(S)=|I|=|I_0|+|I_1|\ge a+n-b+1=\out_n(S).\qedhere
	\]
\end{proof}
\begin{remark}\label{rem:index-complexity-symmetric}
	We also note from the proof of Proposition~\ref{pro:index-complexity-symmetric}, that if \(J_{n,a,b}\) is the outer interval of the symmetric set \(S\), with \(a\ge n-b\), then the set \(I=[1,a+n-b+1]\) satisfies \(|I|=r_n(S)\), and the point \(p=1^a0^{n-a}\) is such that for every \(x\in S,\,x\ne1^a0^{n-a}\), we have \(x_I\ne(1^a0^{n-a})_I\).  On the other hand, if \(a<n-b\), then these choices change to \(I=[b-a,n]\) and \(p=1^b0^{n-b}\).
\end{remark}

\subsection{Index complexity of PDC \(k\)-wise symmetric sets}\label{subsec:main-index-complexity-PDC}

Let us now proceed to prove Proposition~\ref{pro:PDC-index-complexity}, which characterizes the index complexity of PDC \(k\)-wise symmetric sets.  We mention the statement again, for convenience.
\begin{repproposition}[\ref{pro:PDC-index-complexity}]
	For any nonempty outer intact PDC \(k\)-wise symmetric set \(S\subseteq\{0,1\}^N\), we have
	\[
	r_N(S)=\sum_{j=1}^kr_{n_j}(S_j)=\sum_{j=1}^k\out_{n_j}(S_j).
	\]
\end{repproposition}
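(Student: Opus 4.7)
The plan is to prove $r_N(S) = \sum_{j=1}^k r_{n_j}(S_j)$ by matching upper and lower bounds; the second identity $\sum_{j=1}^k r_{n_j}(S_j) = \sum_{j=1}^k \out_{n_j}(S_j)$ is immediate blockwise from Proposition \ref{pro:index-complexity-symmetric}. For the upper bound, I would construct an explicit separator $(p^*, I^*)$ in $S$. Using Proposition \ref{pro:index-complexity-symmetric} and Remark \ref{rem:index-complexity-symmetric} blockwise, I pick for each $j$ a canonical $(p_j^*, I_j^*)$ separating $p_j^*$ in $S_j$ with $|I_j^*| = r_{n_j}(S_j)$ and $|p_j^*|$ equal to the weight $a_j$ (or $b_j$) defining $\outint(S_j) = J_{n_j, a_j, b_j}$. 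The crucial check is that $p^* := (p_1^*, \ldots, p_k^*) \in S$. Letting $z_j^{(a)}$ denote the $\le_j$-index of $a_j$ in $W_{n_j}(S_j)$, outer intactness forces $z_j^{(a)} \le z_j^\dagger$ for every $(z_1^\dagger, \ldots, z_k^\dagger) \in \innext(S)$; picking any such maximum and using downward closure of $\mc{N}(S)$ then places $(z_1^{(a)}, \ldots, z_k^{(a)})$ in $\mc{N}(S)$, so $p^* \in S$. The pair $(p^*, \sqcup_j I_j^*)$ separates $p^*$ in $S$: any $x \in S$ with $x \ne p^*$ differs from $p^*$ in some block $j$, and since $x_j \in S_j$, the blockwise separator $I_j^*$ forces $(x_j)_{I_j^*} \ne (p_j^*)_{I_j^*}$.

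For the lower bound, I would begin with any separator $(p, I)$ of $p$ in $S$ with $|I| = r_N(S)$, let $I_j$ denote the restriction of $I$ to the $j$-th block's coordinates, and define the fiber $F_j(p) := \{y_j \in \{0,1\}^{n_j} : (p_1, \ldots, y_j, \ldots, p_k) \in S\}$; this is a symmetric subset of $\{0,1\}^{n_j}$ since $S$ is $k$-wise symmetric. A short verification shows that $(p_j, I_j)$ separates $p_j$ in $F_j(p)$, so Proposition \ref{pro:index-complexity-symmetric} yields $|I_j| \ge r_{n_j}(F_j(p)) = \out_{n_j}(F_j(p))$. The critical step I would then prove is the identity $\outint(F_j(p)) = \outint(S_j)$; summing $|I_j| \ge \out_{n_j}(S_j)$ over $j$ then gives $|I| \ge \sum_{j=1}^k \out_{n_j}(S_j)$ and closes the lower bound.

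The main obstacle is precisely the identity $\outint(F_j(p)) = \outint(S_j)$, since $F_j(p) \subsetneq S_j$ in general and the outer interval is not monotone under subsets. My plan is: let $(z_1^p, \ldots, z_k^p)$ be $p$'s index tuple and define $\bar z_j := \max\{z : (z_1^p, \ldots, z, \ldots, z_k^p) \in \mc{N}(S)\}$; PDC-downward-closure and $k$-wise symmetry then identify $F_j(p) = [S]_{j, \bar z_j}$. Next, extend $(z_1^p, \ldots, \bar z_j, \ldots, z_k^p)$ to some $(z_1^\ddagger, \ldots, z_k^\ddagger) \in \innext(S)$; downward closure applied to $(z_1^p, \ldots, z_j^\ddagger, \ldots, z_k^p) \le (z_1^\ddagger, \ldots, z_k^\ddagger)$ forces $z_j^\ddagger \le \bar z_j$, hence $z_j^\ddagger = \bar z_j$. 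Applying outer intactness in its original form (equality of outer intervals) to this specific maximum yields $\outint([S]_{j, \bar z_j}) = \outint(S_j)$, which is exactly the required identity.
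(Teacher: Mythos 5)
Your proposal is correct and follows essentially the same route as the paper: the upper bound concatenates the canonical blockwise separators from Remark~\ref{rem:index-complexity-symmetric} and uses outer intactness plus downward closure to certify $p^*\in S$, and the lower bound restricts a minimal separator to blockwise fibers and invokes outer intactness to identify the fiber's outer interval with that of $S_j$. The only (harmless) difference is in the lower bound's bookkeeping: the paper picks a maximal element $(z_1,\ldots,z_k)\in\innext(S)$ dominating $p$'s index tuple and bounds $|I^{(j)}|\ge r_{n_j}([S]_{j,z_j})$ directly, whereas you work with the full fiber $F_j(p)$ and prove the extra (correct) identification $F_j(p)=[S]_{j,\bar z_j}$ with $\bar z_j$ realized as the $j$-th coordinate of a maximal element.
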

\begin{proof}
	For each \(j\in[k]\), let \(J_{n_j,a_j,b_j}\) be the outer interval of \(S_j\), and further, as indicated in Remark~\ref{rem:index-complexity-symmetric}, let
	\[
	\big(p^{(j)},I^{(j)}\big)=\begin{cases}
		\big(1^{a_j}0^{n_j-a_j},[1,a_j+n_j-b_j+1]\big)&\tx{if }a_j\ge n_j-b_j,\\
		\big(1^{b_j}0^{n_j-b_j},[b_j-a_j,n_j]\big)&\tx{if }a_j<n_j-b_j,
	\end{cases}
	\]
	satisfy the definition of index complexity \(r_{n_j}(S_j)\).  Now consider any \((z_1,\ldots,z_k)\in\innext(S)\).  Since \(S\) is outer intact, for each \(j\in[k]\), we have the following.
	\begin{itemize}
		\item  \(J_{n_j,a_j,b_j}\) is the outer interval of \([S]_{j,z_j}\).
		\item  \(p^{(j)}\in[S]_{j,z_j}\).
		\item  \(p^{(j)}\) and \(I^{(j)}\) satisfy the definition of index complexity \(r_{n_j}([S]_{j,z_j})\), as indicated in Remark~\ref{rem:index-complexity-symmetric}.
	\end{itemize}
	Define \(p=(p^{(1)},\ldots,p^{(k)})\in S\) and \(I=I^{(1)}\sqcup\cdots\sqcup I^{(k)}\).  Now consider any \(x=(x^{(1)},\ldots,x^{(k)})\in S,\,x\ne p\).  So there exists \(j\in[k]\) such that \(x^{(j)}\ne p^{(j)}\).  Since \(x\in S\), there exists \((z_1,\ldots,z_k)\in\innext(S)\) such that \(x\in[S]_{1,z_1}\times\cdots\times[S]_{k,z_k}\), and so \(x^{(j)}\in[S]_{j,z_j}\).  Then by the choice of \(I^{(j)}\), we get \(x^{(j)}_{I^{(j)}}\ne p^{(j)}_{I^{(j)}}\).  Thus, we have \(r_N(S)\le|I|=\sum_{j=1}^k{I^{(j)}}|=\sum_{j=1}^kr_{n_j}(S_j)\).
	
	To prove the reverse inequality, now suppose \(p=(p^{(1)},\ldots,p^{(k)})\in S\) and \(I=I^{(1)}\sqcup\cdots\sqcup I^{(k)}\subseteq[N]\) satisfy the definition of index complexity \(r_N(S)\).  Let \((z_1,\ldots,z_k)\in\innext(S)\) such that \(p\in[S]_{1,z_1}\times\cdots\times[S]_{k,z_k}\).  Fix any \(j\in[k]\).  Consider any \(y\in[S]_{j,z_j},\,y\ne p^{(j)}\), and let \(x=(p^{(1)},\ldots,p^{(j-1)},y,p^{(j+1)},\ldots,p^{(k)})\).  Then \(x\in[S]_{1,z_1}\times\cdots\times[S]_{k,z_k}\subseteq S\) and \(x\ne p\).  This implies \(x_I\ne p_I\), which means \(y_{I^{(j)}}\ne p^{(j)}_{I^{(j)}}\).  Thus, we get \(|I^{(j)}|\ge r_{n_j}([S]_{j,z_j})=r_{n_j}(S_j)\).  Hence, \(r_N(S)=|I|=\sum_{j=1}^k|I^{(j)}|\ge\sum_{j=1}^kr_{n_j}(S_j)\).
	
	The final equality in the statement is then immediate from Proposition~\ref{pro:index-complexity-symmetric}.
\end{proof}

\section{Covering PDC \(k\)-wise symmetric sets}\label{sec:covering-PDC}

Let us now prove our third main result (Theorem~\ref{thm:EPC-PDC}).  We mention the statement again, for convenience.  Recall that we work with the indeterminates \(\mb{X}=(\mb{X}_1,\ldots,\mb{X}_k)\), where \(\mb{X}_j=(X_{j,1},\ldots,X_{j,n_j})\) are the indeterminates for the \(j\)-th block.
\begin{reptheorem}[\ref{thm:EPC-PDC}]
	For any nonempty PDC \(k\)-wise symmetric set \(S\subseteq\{0,1\}^N\) and \(t\ge1\), we have
	\[
	\bEPC_{(n_1,\ldots,n_k)}^{(t,t-1)}(\{0,1\}^N\setminus S)=\max_{(z_1,\ldots,z_k)\in\outext(S)}\bigg\{\sum_{j\in[k]:z_j\ge1} \ol{\Lambda}_{n_j}([S]_{j,z_j-1})\bigg\}+2t-2.
	\]
\end{reptheorem}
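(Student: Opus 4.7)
The plan is to establish matching upper and lower bounds on $\bEPC_{(n_1,\ldots,n_k)}^{(t,t-1)}(\{0,1\}^N\setminus S)$. For the upper bound, I will verify that the polynomial exhibited in Example~\ref{ex:EPC-PDC} is a valid $(t,t-1)$-block exact polynomial cover of $\{0,1\}^N\setminus S$ of the claimed degree. The key computation, via Lemma~\ref{lem:T} and Fact~\ref{fact:lambda}, is that for each $(z_1,\ldots,z_k)\in\outext(S)$ and each $j\in[k]$ with $z_j\ge 1$, the factor $\mc{H}^*_{\ol{\mu}_{n_j}([S]_{j,z_j-1})}(\mb{X}_j)\cdot\mc{H}'_{W_{n_j}(\{0,1\}^{n_j}\setminus[S]_{j,z_j-1})\setminus W_{n_j,\ol{\mu}_{n_j}([S]_{j,z_j-1})}}(\mb{X}_j)$ is a product of affine-linear forms whose zero set in $\{0,1\}^{n_j}$ is exactly $\{0,1\}^{n_j}\setminus[S]_{j,z_j-1}$ and whose degree is $\ol{\Lambda}_{n_j}([S]_{j,z_j-1})$. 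The multiplicity booster $\mc{H}^{\circ(t-1)}(\mb{X}_1)=X_{1,1}^{t-1}(X_{1,1}-1)^{t-1}$ lifts the vanishing multiplicity on $\{0,1\}^N\setminus S$ to at least $t$, while preserving the exact restriction-multiplicity $\ell=t-1$ on $S$. The $\wh{\mb{Q}}$-linear independence of the scalars $\lambda_{S,(z_1,\ldots,z_k)}$, selected precisely as in Example~\ref{ex:venkitesh}(a), ensures that the weighted sum does not vanish on $S$ by accident.

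For the lower bound, I fix any $z^*=(z_1^*,\ldots,z_k^*)\in\outext(S)$ attaining the maximum in the statement, and take an arbitrary $(t,t-1)$-block exact polynomial cover $P(\mb{X})$ of $\{0,1\}^N\setminus S$. Since $z^*$ is a minimal element of $\mb{N}^k\setminus\mc{N}(S)$, every strict coordinatewise predecessor of $z^*$ lies in $\mc{N}(S)$, yielding a rich supply of points of $S$ whose weight tuples are predecessors of $z^*$. I will restrict $P$ to the slice $[S]_{1,z_1^*}\times\cdots\times[S]_{k,z_k^*}$, freezing each block $j$ with $z_j^*=0$ at a suitable point of $S_j$; within this slice, the intersection with $\{0,1\}^N\setminus S$ is exactly the $k$-wise layer of weight $(w_{1,z_1^*},\ldots,w_{k,z_k^*})$. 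A polynomial-method argument in the style of Theorem~\ref{thm:alon-furedi-again} and its multiplicity extension Theorem~\ref{thm:sauermann-wigderson-again}, applied to the restricted polynomial, then yields $\deg(P)\ge\sum_{j:z_j^*\ge 1}\ol{\Lambda}_{n_j}([S]_{j,z_j^*-1})+2t-2$.

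The main obstacle is assembling the per-block contributions into a genuine sum rather than a maximum over $j$. A naive restriction freezing all blocks except one gives only the per-block bound $\deg_{\mb{X}_j}(P)\ge\ol{\Lambda}_{n_j}([S]_{j,z_j^*-1})+2t-2$, which combines at best into the maximum over $j$. The PDC hypothesis is essential here: it guarantees that, modulo its outer layer at weight $(w_{1,z_1^*},\ldots,w_{k,z_k^*})$, the slice $[S]_{1,z_1^*}\times\cdots\times[S]_{k,z_k^*}$ behaves as a $k$-wise grid of hypercubes, allowing a single combined application of the polynomial method to the joint block variables that correctly accumulates the degree contributions into the desired sum. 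The fallback is an induction on $k$ with base case Theorem~\ref{thm:multiplicity-symmetric}, peeling off one block at a time using the PDC structure of $S$ near $z^*$.
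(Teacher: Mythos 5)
Your upper bound is exactly the paper's: you verify that the polynomial of Example~\ref{ex:EPC-PDC} is a $(t,t-1)$-block exact cover of the claimed degree, using Lemma~\ref{lem:T}, Fact~\ref{fact:lambda}, and the $\wh{\mb{Q}}$-linear independence of the coefficients. That part is in order.

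The lower bound, however, has a genuine gap. You correctly name the obstacle (turning per-block bounds into a sum rather than a maximum), but the mechanism you offer to overcome it is not an argument. ``Restricting $P$ to the slice $[S]_{1,z_1^*}\times\cdots\times[S]_{k,z_k^*}$'' does not produce a covering problem on a hypercube or on a grid $A_1\times\cdots\times A_n$, because each $[S]_{j,z_j^*}$ is a symmetric subset of $\{0,1\}^{n_j}$, not a subcube; the claim that the PDC hypothesis makes the slice ``behave as a $k$-wise grid of hypercubes'' is exactly the point that needs proof, and it cannot be literally true, since the quantity $\ol{\Lambda}_{n_j}([S]_{j,z_j-1})$ that must appear in the bound is an invariant of a symmetric set rather than a dimension count, so Theorem~\ref{thm:alon-furedi-again} or Theorem~\ref{thm:sauermann-wigderson-again} applied to a restricted polynomial cannot yield it directly. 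The paper's proof proceeds differently and never restricts $P$: for each $(z_1,\ldots,z_k)\in\outext(S)$ it \emph{multiplies} $P$ by $\prod_{j}\mc{H}'_{W_{n_j}([S]_{j,z_j-1})\setminus\{\ol{\mu}_j\}}(\mb{X}_j)$, and uses the block-exactness of $P$ to check that the product $Q$ is a $(t,t-1)$-exact polynomial cover of the complement of a single $k$-wise layer $L$ of blockwise weight $(\ol{\mu}_1,\ldots,\ol{\mu}_k)$. It then invokes the index-complexity lower bound of Theorem~\ref{thm:EPC-index} together with $r_N(L)=\sum_j\ol{\mu}_j$ (Corollary~\ref{cor:k-layer-index-complexity}), and the degree bookkeeping of the added factors converts $\deg(Q)\ge N-\sum_j\ol{\mu}_j+2t-2$ into $\deg(P)\ge\sum_j\ol{\Lambda}_{n_j}([S]_{j,z_j-1})+2t-2$. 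None of these ingredients --- the multiplication trick, Theorem~\ref{thm:EPC-index}, Corollary~\ref{cor:k-layer-index-complexity} --- appears in your sketch, and your fallback induction on $k$ would face the same difficulty in its inductive step. To repair the proposal you would need to supply this reduction (or an equivalent one) explicitly.
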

\begin{proof}
	Let us first prove the lower bound.  Let \(P(\mb{X})\in\mb{R}[\mb{X}]\) be a \((t,t-1)\)-block exact polynomial cover for \(\{0,1\}^N\setminus S\).  Fix any \((z_1,\ldots,z_k)\in\outext(S)\).  Note that for any \(j\in[k]\), we have \(z_j\ge1\) if and only if \([S]_{j,z_j-1}\ne\emptyset\).  So, without loss of generality, we assume \(z_j\ge1\) for all \(j\in[k]\).  It is now enough to show that
	\[
	\deg(P)\ge\sum_{j=1}^k\ol{\Lambda}_{n_j}([S]_{j,z_j-1})+2t-2.
	\]
	Consider any \(j\in[k]\), and let
	\[
	\ol{\mu}_j\coloneqq\ol{\mu}_n([S]_{j,z_j-1})=\max\{i\in[0,\lceil n/2\rceil]:W_{n_j,i}\subseteq W_n(\{0,1\}^{n_j}\setminus[S]_{j,z_j-1})\}.
	\]
	So either \(\ol{\mu}_j\in W_n([S]_{j,z_j-1})\) or \(n_j-\ol{\mu}_j\in W_n([S]_{j,z_j-1})\).  Without loss of generality, suppose \(\ol{\mu}_j\in W_n([S]_{j,z_j-1})\).  Then clearly, \(|W_{n_j}([S]_{j,z_j-1})\setminus\{\ol{\mu}_j\}|=n_j-|W_n(\{0,1\}^{n_j}\setminus[S]_{j,z_j-1})|\).  Also clearly, \(1^{\ol{\mu}_j}0^{n_j-\ol{\mu}_j}\in[S]_{j,z_j-1}\).  Define
	\[
	Q(\mb{X})=P(\mb{X})\cdot\prod_{j=1}^k\mc{H}'_{W_{n_j}([S]_{j,z_j-1})\setminus\{\ol{\mu}_j\}}(\mb{X}_j).
	\]
	
	Recall that we have \(W_{(n_1,\ldots,n_k)}(S)=\{(|x^{(1)}|,\ldots,|x^{(k)}|):x\in S\}\).  Further, recall that we have \(W_{(n_1,\ldots,n_k)}([S]_{1,z_1-1}\times\cdots\times[S]_{k,z_k-1})\coloneqq W_{n_1}([S]_{1,z_1-1})\times\cdots\times W_{n_k}([S]_{k,z_k-1})\).  Consider any \(x=(x^{(1)},\ldots,x^{(k)})\in\{0,1\}^N\).
	We have the following cases.
	\begin{enumerate}[(C1)]
		\item  \((|x^{(1)}|,\ldots,|x^{(k)}|)=(\ol{\mu}_1,\ldots,\ol{\mu}_k)\).  So we have
		\begin{itemize}
			\item  \(\mult(P(x'_{(j)},\mb{X}_j),x^{(j)})=t-1\), where \(x=(x'_{(j)},x^{(j)})\), for every \(j\in[k]\).
			\item  \(\mc{H}'_{W_{n_j}([S]_{j,z_j-1})\setminus\{\ol{\mu}_j\}}(x^{(j)})\ne0\), for every \(j\in[k]\).
		\end{itemize}
		This implies \(\mult(Q(\mb{X}),x)=t-1\).  Note that this is where we need \(P(\mb{X})\) to be a \((t,t-1)\)-block exact polynomial cover and not just a \((t,t-1)\)-exact polynomial cover for \(\{0,1\}^N\setminus S\).
		
		
		\item  \((|x^{(1)}|,\ldots,|x^{(k)}|)\in W_{(n_1,\ldots,n_k)}([S]_{1,z_1-1}\times\cdots\times[S]_{k,z_k-1})\setminus\{(\ol{\mu}_1,\ldots,\ol{\mu}_k)\}\).  So we have
		\begin{itemize}
			\item  \(\mult(P(x'_{(j)},\mb{X}_j),x^{(j)})=t-1\), where \(x=(x'_{(j)},x^{(j)})\), for every \(j\in[k]\).
			\item  There exists \(j\in[k]\) such that \(|x^{(j)}|\ne\ol{\mu}_j\), and so \(\mc{H}'_{W_{n_j}([S]_{j,z_j-1})\setminus\{\ol{\mu}_j\}}(x^{(j)})=0\)
		\end{itemize}
		This implies \(\mult(Q(\mb{X}),x)\ge t\).
		
		\item  \((|x^{(1)}|,\ldots,|x^{(k)}|)\not\in W_{(n_1,\ldots,n_k)}([S]_{1,z_1-1}\times\cdots\times[S]_{k,z_k-1})\).  So \(\mult(P(\mb{X}),x)\ge t\), and this implies \(\mult(Q(\mb{X}),x)\ge t\).
	\end{enumerate}
	Thus, \(Q(\mb{X})\) is a \((t,t-1)\)-exact polynomial cover for \(\{0,1\}^N\setminus L\), where \(L=L_1\times\cdots\times L_k\) is a \(k\)-wise layer given by \(W_{n_j}(L_j)=\{\ol{\mu}_j\},\,j\in[k]\).  So Theorem~\ref{thm:EPC-index} and Corollary~\ref{cor:k-layer-index-complexity} imply
	\begin{align}
	\deg(Q)\ge N-r_N(L)+2t-2=N-\sum_{j=1}^k\ol{\mu}_j+2t-2.\label{Q-lb}
	\end{align}
	Further, by construction, we have
	\begin{align}
	\deg(Q)&=\deg(P)+\sum_{j=1}^k\big(n_j-|W_n(\{0,1\}^{n_j}\setminus[S]_{j,z_j-1})|\big)\notag\\
	&=\deg(P)+N-\sum_{j=1}^k|W_{n_j}(\{0,1\}^{n_j}\setminus[S]_{j,z_j-1})|.\label{Q-ub}
	\end{align}
	From (\ref{Q-lb}) and (\ref{Q-ub}), we get
	\[
	\deg(P)\ge\sum_{j=1}^k\big(|W_{n_j}(\{0,1\}^{n_j}\setminus[S]_{j,z_j-1})|-\ol{\mu}_j\big)+2t-2=\sum_{j=1}^k\ol{\Lambda}_{n_j}([S]_{j,z_j-1})+2t-2.
	\]
	This completes the proof of the lower bound.
	
	Let us now show that the construction in Example~\ref{ex:EPC-PDC} attains the lower bound we just proved.  Recall that Example~\ref{ex:EPC-PDC} defines a polynomial
	\[
	\msf{h}_S(\mb{X})\coloneqq\bigg(\sum_{(z_1,\ldots,z_k)\in\outext(S)}\lambda_{S,(z_1,\ldots,z_k)}\mc{H}_{S,(z_1,\ldots,z_k)}(\mb{X})\bigg)\cdot\mc{H}^{\circ(t-1)}(\mb{X}_1),
	\]
	where, for each \((z_1,\ldots,z_k)\in\outext(S)\), we have
	\[
	\mc{H}_{S,(z_1,\ldots,z_k)}(\mb{X})=\prod_{j\in[k]:z_j\ge1}\Big(\mc{H}^*_{\ol{\mu}_n([S]_{j,z_j-1})}(\mb{X}_j)\cdot\mc{H}'_{W_{n_j}(\{0,1\}^{n_j}\setminus[S]_{j,z_j-1})\setminus W_{n_j,\ol{\mu}_n([S]_{j,z_j-1})}}(\mb{X}_j)\Big),
	\]
	and further, \(\{\lambda_{S,(z_1,\ldots,z_k)}:(z_1,\ldots,z_k)\in\outext(S)\}\subseteq\mb{R}\) is a \(\wh{\mb{Q}}\)-linearly independent subset of \(\mb{R}\), where the subfield \(\wh{\mb{Q}}\coloneqq\mb{Q}\big(\mc{H}_{S,(z_1,\ldots,z_k)}(b):b\in\{0,1\}^N,\,(z_1,\ldots,z_k)\in\outext(S)\big)\).  Firstly, note that since \(\mc{H}^{\circ(t-1)}(\mb{X}_1)=X_1^{t-1}(X_1-1)^{t-1}\), we clearly get \(\mult(\mc{H}^{\circ(t-1)}(\mb{X}_1),x)=t-1\) for all \(x\in\{0,1\}^N\).  Now fix any \((z_1,\ldots,z_k)\in\outext(S)\), and consider any \(x\in\{0,1\}^N\).  We observe
	\begin{align*}
		&\mc{H}_{S(z_1,\ldots,z_k)}(x)\ne0\\
		\iff\quad&\mc{H}^*_{\ol{\mu}_n([S]_{j,z_j-1})}(x^{(j)})\cdot\mc{H}'_{W_n(\{0,1\}^{n_j}\setminus[S]_{j,z_j-1})\setminus W_{n_j,\ol{\mu}_n([S]_{j,z_j-1})}}(x^{(j)})\ne0,&&\tx{for all }j\in[k]:z_j\ge1\\
		\iff\quad&x^{(j)}\not\in\{0,1\}^{n_j}\setminus[S]_{j,z_j-1},&&\tx{for all }j\in[k]:z_j\ge1\\
		\iff\quad&x\in\bigg(\prod_{j\in[k]:z_j\ge1}[S]_{j,z_j-1}\bigg)\times\bigg(\prod_{j\in[k]:z_j=0}\{0,1\}^{n_j}\bigg).
	\end{align*}
	Now it is easy to check that
	\[
	\{0,1\}^N\setminus S=\bigcap_{(z_1,\ldots,z_k)\in\outext(S)}\bigg(\{0,1\}^N\mathbin{\bigg\backslash}\bigg(\prod_{j\in[k]:z_j\ge1}[S]_{j,z_j-1}\bigg)\times\bigg(\prod_{j\in[k]:z_j=0}\{0,1\}^{n_j}\bigg)\bigg).
	\]
	So by the \(\wh{Q}\)-linear independence of \(\{\lambda_{S,(z_1,\ldots,z_k)}:(z_1,\ldots,z_k)\in\outext(S)\}\subseteq\mb{R}\), we get
	\begin{align*}
		&\sum_{(z_1,\ldots,z_k)\in\outext(S)}\lambda_{S,(z_1,\ldots,z_k)}\mc{H}_{S,(z_1,\ldots,z_k)}(x)=0\\
		\iff\quad&\mc{H}_{S,(z_1,\ldots,z_k)}(x)=0,\qquad\qquad\qquad\qquad\qquad\qquad\qquad\quad\,\tx{for all }(z_1,\ldots,z_k)\in\outext(S)\\
		\iff\quad&x\not\in\bigg(\prod_{j\in[k]:z_j\ge1}[S]_{j,z_j-1}\bigg)\times\bigg(\prod_{j\in[k]:z_j=0}\{0,1\}^{n_j}\bigg),\qquad\tx{for all }(z_1,\ldots,z_k)\in\outext(S)\\
		\iff\quad&x\in\bigcap_{(z_1,\ldots,z_k)\in\outext(S)}\bigg(\{0,1\}^N\mathbin{\bigg\backslash}\bigg(\prod_{j\in[k]:z_j\ge1}[S]_{j,z_j-1}\bigg)\times\bigg(\prod_{j\in[k]:z_j=0}\{0,1\}^{n_j}\bigg)\bigg)\\
		\iff\quad&x\in\{0,1\}^N\setminus S.
	\end{align*}
	Thus, we have
	\begin{itemize}
		\item  \(\mult(\msf{h}_S(\mb{X}),x)\ge t\) if \(x\in\{0,1\}^N\setminus S\).
		\item  \(\mult(\msf{h}_S(x_{(j)},\mb{X}_j),x^{(j)})=t-1\), where \(x=(x_{(j)},x^{(j)})\), for each \(j\in[k]\).
	\end{itemize}
	Hence, \(\msf{h}_S(\mb{X})\) is a \((t,t-1)\)-block exact polynomial cover for \(\{0,1\}^N\setminus S\).
	
	Now for any \((z_1,\ldots,z_k)\in\outext(S)\), we have
	\begin{align*}
	\deg(\mc{H}_{S,(z_1,\ldots,z_k)})&=\sum_{j\in[k]:z_j\ge1}\bigg(\ol{\mu}_n([S]_{j,z_j-1})+|W_{n_j}(\{0,1\}^{n_j}\setminus[S]_{j,z_j-1})\setminus W_{n_j,\ol{\mu}_n([S]_{j,z_j-1})}|\bigg)\\
	&=\sum_{j\in[k]:z_j\ge1}\bigg(W_{n_j}(\{0,1\}^{n_j}\setminus[S]_{j,z_j-1})|-\ol{\mu}_n([S]_{j,z_j-1})\bigg)\\
	&=\sum_{j\in[k]:z_j\ge1}\ol{\Lambda}_{n_j}([S]_{j,z_j-1}).
	\end{align*}
	Hence,
	\[
	\deg(\msf{h}_S)=\max_{(z_1,\ldots,z_k)\in\outext(S)}\bigg\{\sum_{j\in[k]:z_j\ge1} \ol{\Lambda}_{n_j}([S]_{j,z_j-1})\bigg\}+2t-2,
	\]
	that is, \(\msf{h}_S(\mb{X})\) attains the lower bound.
\end{proof}

\section{Partial results for \((t,0)\)-exact polynomial covers}\label{sec:partial-results}

Let us have a few notations before we proceed.  For any \(\alpha\in\mb{N}^n\), we denote \(\alpha!\coloneqq\alpha_1\cdots\alpha_n\).  Further, for any \(a\in\mb{R}^n\), we define the polynomial \((\mb{X}-a)^\alpha\coloneqq(X_1-a_1)^{\alpha_1}\cdots(X_n-a_n)^{\alpha_n}\).  Therefore, for any \(P(\mb{X})\in\mb{R}[\mb{X}]\) and \(a\in\mb{R}^n\), the Taylor's expansion of \(P\) about the point \(a\) is
\[
P(\mb{X})=\sum_{0\le|\alpha|\le\deg(P)}\frac{(\partial^\alpha P)(a)}{\alpha!}(\mb{X}-a)^\alpha.
\]

Let us first prove Proposition~\ref{pro:EPC-hamming}.  We mention the statement again, for convenience.
\begin{repproposition}[\ref{pro:EPC-hamming}]
	For \(w\in[1,n-1]\), let \(S\subsetneq\{0,1\}^n\) be the symmetric set defined by \(W_n(S)=[0,w-1]\).  Then for any \(t\in\big[2,\big\lfloor\frac{n+3}{2}\big\rfloor\big]\), we have
	\[
	\EPC_n^{(t,0)}(S)=w+2t-3.
	\]
	Further, the answer to Question~\ref{ques:main} is negative, in general.
\end{repproposition}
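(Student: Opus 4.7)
The plan is to establish matching upper and lower bounds for $\EPC_n^{(t,0)}(S)$ via reductions to Theorem~\ref{thm:sauermann-wigderson-again} applied in dimension $w$, and then to exhibit an explicit instance where $\EHC_n^{(t,0)}(S)$ strictly exceeds $\EPC_n^{(t,0)}(S)$.

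For the upper bound, I will invoke Theorem~\ref{thm:sauermann-wigderson-again} in dimension $w$ (via the reflection $Y_i \mapsto 1 - Y_i$) to obtain a $(t,0)$-exact polynomial cover $Q \in \mb{R}[Y_1,\ldots,Y_w]$ for $\{0,1\}^w \setminus \{1^w\}$ of degree $w + 2t - 3$. Averaging $Q$ over the natural action of $\mf{S}_w$ on its variables preserves the multiplicity conditions (since $\{0,1\}^w \setminus \{1^w\}$ is symmetric) and does not increase the degree, so I may assume $Q$ is symmetric. I will then define the symmetric lift
\[
P(\mb{X}) \coloneqq \sum_{\substack{I \subseteq [n] \\ |I| = w}} Q\big((X_i)_{i \in I}\big),
\]
which is well-defined by the symmetry of $Q$ and satisfies $\deg(P) \le w + 2t - 3$. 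For $x \in \{0,1\}^n$ with $|x| = k < w$, every $x_I$ has weight $|I \cap \supp(x)| \le k < w$, so $x_I \in \{0,1\}^w \setminus \{1^w\}$; hence $\mult(Q(X_I), x) = \mult(Q, x_I) \ge t$, and subadditivity of multiplicity under sums gives $\mult(P, x) \ge t$. For $|x| = k \ge w$, the only summands with $Q(x_I) \ne 0$ are those with $I \subseteq \supp(x)$ (each contributing $Q(1^w)$), so $P(x) = \binom{k}{w}\, Q(1^w) \ne 0$.

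For the lower bound, I will take any $(t,0)$-exact polynomial cover $P \in \mb{R}[\mb{X}]$ for $S$ and restrict it to a $w$-dimensional coordinate subspace by setting $\tilde P(Y_1,\ldots,Y_w) \coloneqq P(Y_1,\ldots,Y_w,0,\ldots,0)$. For each $y \in \{0,1\}^w$ with $|y| < w$, the point $(y,0^{n-w})$ has weight $|y| < w$ and lies in $S$; since restriction to a coordinate subspace cannot decrease multiplicity at a point of that subspace, $\mult(\tilde P, y) \ge \mult(P,(y,0^{n-w})) \ge t$. Since $(1^w,0^{n-w}) \notin S$, we have $\tilde P(1^w) = P(1^w,0^{n-w}) \ne 0$, so $\tilde P$ is itself a $(t,0)$-exact polynomial cover for $\{0,1\}^w \setminus \{1^w\}$, and Theorem~\ref{thm:sauermann-wigderson-again} yields $\deg(P) \ge \deg(\tilde P) \ge w + 2t - 3$.

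For the negative answer to Question~\ref{ques:main}, I will exhibit the instance $w = 2,\, t = 2,\, n \ge 3$, where $S = \{0^n\} \cup \{e_1,\ldots,e_n\}$ and the target $\EPC$ equals $3$. Any hyperplane $H$ appearing in a $(2,0)$-exact hyperplane cover must have its $\{0,1\}^n$-zero-set contained in $S$, since otherwise a point of $\{0,1\}^n \setminus S$ would be covered, violating the exact-multiplicity-zero condition. A short analysis classifies admissible hyperplanes by their $\{0,1\}^n$-zero-sets into families: $\{0^n\}$; $\{0^n, e_j\}$ for $j \in [n]$; singletons $\{e_j\}$ and pairs $\{e_i,e_j\}$ (and analogous small subsets of $\{e_1,\ldots,e_n\}$); and $\{e_1,\ldots,e_n\}$ itself (realised e.g.\ by $\sum X_i = 1$). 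A double-counting estimate on point-hyperplane incidences, combined with a finite case analysis of all three-hyperplane multisets drawn from these families, then shows that every such triple leaves either $0^n$ or some $e_j$ covered fewer than twice, forcing $\EHC_n^{(2,0)}(S) \ge 4 > 3$. The main obstacle is precisely this final case analysis: while the polynomial bounds are clean reductions to Sauermann-Wigderson, the separation between hyperplane and polynomial covers rests on the combinatorial enumeration of admissible hyperplanes and their possible coverage patterns.
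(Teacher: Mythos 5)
Your two bounds for \(\EPC_n^{(t,0)}(S)\) are correct and follow the paper's proof essentially verbatim: the lower bound by restricting to \(P(Y_1,\ldots,Y_w,0^{n-w})\) and invoking Theorem~\ref{thm:sauermann-wigderson-again} in dimension \(w\), and the upper bound by summing a \(w\)-variate cover of \(\{0,1\}^w\setminus\{1^w\}\) over all \(w\)-subsets of \([n]\) (your extra symmetrization of \(Q\) is harmless but unnecessary: the sum over increasing index tuples is well defined for any \(Q\), which is exactly what the paper writes down). For the separation \(\EHC>\EPC\), you choose essentially the same instance as the paper (\(w=2\), \(t=2\); the paper takes \(n=3\)), but you stop at a promised ``finite case analysis'' that you flag as the main obstacle. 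It is not an obstacle: your own classification of admissible hyperplanes already encodes the decisive fact, namely that a hyperplane vanishing at \(0^n\) is a homogeneous linear form and hence cannot vanish at two distinct points \(e_i,e_j\) of weight one without also vanishing at \(e_i+e_j\notin S\). Since \(0^n\in S\) must be covered at least twice, at least two of the three hyperplanes are such linear forms, so together they contribute at most two incidences with the \(n\ge3\) points \(e_1,\ldots,e_n\); the remaining hyperplane meets each \(e_j\) at most once, so some \(e_j\) is covered at most once, a contradiction. With that two-line closure your argument coincides with the paper's.
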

\begin{proof}
	Let \(P(\mb{X})\in\mb{R}[\mb{X}]\) be a \((t,0)\)-exact polynomial cover for \(S\).  Consider the restricted polynomial \(\wt{P}(X_1,\ldots,X_w)\coloneqq P(X_1,\ldots,X_w,0^{n-w})\).  Then \(\wt{P}(1^w)=P(1^w0^{n-w})\ne0\), since \(1^w0^{n-w}\not\in S\).  Further, for any \(x\in\{0,1\}^w,\,x\ne1^w\), we have \(|x|\le w-1\), which means \(x0^{n-w}\in S\), and so \(\mult(\wt{P}(X_1,\ldots,X_w),x)\ge\mult(P(\mb{X}),x0^{n-w})\ge t\).  Thus, \(\wt{P}(X_1,\ldots,X_w)\) is a \((t,0)\)-exact polynomial cover for \(\{0,1\}^w\setminus\{1^w\}\).  So by Theorem~\ref{thm:sauermann-wigderson-again}, we get \(\deg(\wt{P})\ge w+2t-3\).  Hence, \(\deg(P)\ge\deg(\wt{P})\ge w+2t-3\).
	
	In order to show that the lower bound is tight, let \(Q(X_1,\ldots,X_w)\in\mb{R}[\mb{X}]\) be a \((t,0)\)-exact polynomial cover of the symmetric set \(\{0,1\}^w\setminus\{1^w\}\), with \(\deg(Q)=w+2t-3\), as ensured by Theorem~\ref{thm:sauermann-wigderson-again}.  Further, let \(\alpha=Q(1^w)\ne0\).  Now define a polynomial
	\[
	\wt{Q}(\mb{X})=\sum_{1\le i_1<\cdots<i_w\le n}Q(X_{i_1},\ldots,X_{i_w}).
	\]
	Then clearly, \(\deg(\wt{Q})=\deg(Q)=w+2t-3\).  Now consider any \(x\in\{0,1\}^n\).  We observe the following.
	\begin{itemize}
		\item  If \(|x|\le w-1\), then for any \(1\le i_1<\cdots<i_w\le n\), we have
		\[
		\mult(\wt{Q}(\mb{X}),x)\ge\mult(Q(X_{i_1},\ldots,X_{i_w}),(x_{i_1},\ldots,x_{i_w}))\ge t.
		\]
		\item  If \(|x|\ge w\), let \(\{j_1,\ldots,j_u\}=\{i\in[n]:x_i=1\}\).  Then we have
		\[
		\wt{Q}(x)=\sum_{\substack{1\le i_1<\cdots<i_w\le n\\\{i_1,\ldots,i_w\}\subseteq\{j_1,\ldots,j_u\}}}\!\!\!\alpha\n=\binom{u}{w}\alpha\ne0.
		\]
	\end{itemize}
	Thus, \(\wt{Q}(\mb{X})\) is a \((t,0)\)-exact polynomial cover for \(S\).  Hence, \(\wt{Q}(\mb{X})\) attains the lower bound.
	
	Let us now show an example that illustrates that the answer to Question~\ref{ques:main} is negative in general, that is, \(\EHC_n^{(t,0)}(S)>\EPC_n^{(t,0)}(S)\) in general.  Let \(t=2,\,n=3>2t-3\).  Let \(S\subseteq\{0,1\}^3\) be the symmetric set defined by \(W_n(S)=\{0,1\}\).  So \(S=\{(0,0,0),(1,0,0),(0,1,0),(0,0,1)\}\).  We have \(\EPC_3^{(2,0)}(S)=3\) (since \(w=1\) in this case).  Now suppose there are three hyperplanes \(\{h_1,h_2,h_3\}\) that form a \((2,0)\)-exact hyperplane cover for \(S\).  Without loss of generality, let \(h_1(0,0,0)=h_2(0,0,0)=0\).  So \(h_1(X_1,X_2,X_3)=a_1X_1+a_2X_2+a_3X_3,\,h_2(X_1,X_2,X_3)=b_1X_1+b_2X_2+b_3X_3\), for some \(a_1,a_2,a_3,b_1,b_2,b_3\in\mb{R}\).  Further, without loss of generality, suppose \(h_1(1,0,0)=h_1(0,1,0)=0\).  This gives \(h_1(1,1,0)=h_1(1,0,0)+h_1(0,1,0)=0\), which contradicts \(\{h_1,h_2,h_3\}\) being a \((2,0)\)-exact hyperplane cover for \(S\).  So we conclude that \(|\mc{Z}(h_j)\cap\{(1,0,0),(0,1,0),(0,0,1)\}|\le 1\) for \(j\in[3]\).  This implies that it is impossible for each point in \(\{(1,0,0),(0,1,0),(0,0,1)\}\) to be covered 2 times by \(\{h_1,h_2,h_3\}\).  Hence \(\EHC_3^{(2,0)}(S)>3\).
\end{proof}

Now let us prove Proposition~\ref{pro:EPC-layer}.  We mention the statement again, for convenience.
\begin{repproposition}[\ref{pro:EPC-layer}]
	For any layer \(S\subsetneq\{0,1\}^n\) with \(W_n(S)=\{w\}\), and \(t\ge1\), we have
	\[
	\EPC_n^{(t,0)}(S)=t.
	\]
\end{repproposition}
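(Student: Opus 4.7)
The plan is to establish the matching upper and lower bounds on $\EPC_n^{(t,0)}(S)$ by a direct construction and a line-restriction argument, respectively.

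For the upper bound, I would use the polynomial
\[
P(\mb{X}) \coloneqq \bigg(\sum_{i=1}^n X_i - w\bigg)^{\!t},
\]
which has degree exactly $t$.  Writing $L(\mb{X}) = \sum_i X_i - w$, any $a \in S$ satisfies $L(a) = 0$, so the Taylor expansion of $L^t$ around $a$ consists solely of terms of degree $\ge t$ in $\mb{X} - a$; hence $\mult(P(\mb{X}), a) \ge t$.  For any $b \in \{0,1\}^n \setminus S$ we get $P(b) = (|b| - w)^t \ne 0$, so $\mult(P(\mb{X}), b) = 0$.  Thus $P(\mb{X})$ is a $(t,0)$-exact polynomial cover of $S$, yielding $\EPC_n^{(t,0)}(S) \le t$.

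For the lower bound, let $Q(\mb{X})$ be an arbitrary $(t,0)$-exact polynomial cover of $S$.  I would fix $a = 1^w 0^{n-w} \in S$ together with the direction vector $v = e_{w+1}$ if $w < n$, or $v = -e_1$ if $w = n$; in either case $a + v$ is a Boolean point of weight $w + 1$ (resp.\ $n-1$) lying in $\{0,1\}^n \setminus S$.  Now consider the univariate restriction $p(s) \coloneqq Q(a + sv) \in \mb{R}[s]$, which satisfies $\deg(p) \le \deg(Q)$.  The key analytic input is the chain rule, which gives
\[
p^{(k)}(0) = \sum_{|\alpha| = k} \frac{k!}{\alpha!}\, v^\alpha\, \partial^\alpha Q(a),
\]
so the hypothesis $\mult(Q(\mb{X}), a) \ge t$ forces every $p^{(k)}(0) = 0$ for $k < t$, that is, $\mult(p(s), 0) \ge t$.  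On the other hand $p(1) = Q(a + v) \ne 0$, since $a + v \notin S$ must be covered with multiplicity exactly $0$.  Hence $p(s)$ is a nonzero univariate polynomial with a root of order at least $t$ at the origin, giving $\deg(Q) \ge \deg(p) \ge t$.

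There is essentially no genuine obstacle: the only care needed is the edge case $w = n$, handled by reversing the direction of the line.  The proof is analytic in the sense flagged in the statement, since it passes through the chain rule for the restriction of $Q$ to a line, rather than through any combinatorial or divisibility argument of polynomial-method flavour.
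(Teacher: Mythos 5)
Your proposal is correct. The upper bound is identical to the paper's (the witness \(\big(\sum_{i=1}^nX_i-w\big)^t\)), and your lower bound is the same Taylor-expansion idea in a mildly different packaging: the paper expands the full multivariate polynomial \(P\) about a point \(a\in S\), observes that all terms of order \(<t\) vanish, and concludes \(\deg(P)\ge t\) directly from the nonzeroness of \(P\) as a polynomial (after an inessential linear change of coordinates). You instead restrict \(Q\) to a line through \(a\) and a Boolean neighbour \(a+v\notin S\), obtaining a univariate \(p(s)\) with a zero of order at least \(t\) at \(s=0\). The univariate reduction is slightly more elementary, but it forces you to rule out \(p\equiv0\), which you do correctly by invoking the exact-multiplicity-\(0\) condition at \(a+v\) to get \(p(1)\ne0\); the paper's argument needs no such auxiliary point, since a nonzero multivariate polynomial whose Taylor expansion about \(a\) starts at order \(t\) already has degree at least \(t\). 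Both routes are sound, and your handling of the edge case \(w=n\) is fine.
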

\begin{proof}
	Let \(P(\mb{X})\in\mb{R}[\mb{X}]\) be a \((t,0)\)-exact polynomial cover for \(S\).  Fix any \(a\in S\).  So there exists an invertible linear map \(T:\mb{R}^n\to\mb{R}^n\) (or equivalently, an invertible change of coordinates) such that the polynomial \(\wt{P}(\mb{X})\coloneqq P(L(\mb{X}-a)+a)\) is a \((t,0)\)-exact polynomial cover for \(\{a\}\).  Also, \(\deg(\wt{P})=\deg(P)\).  Now we have the Taylor's expansion of \(\wt{P}\) about \(a\) as
	\[
	\wt{P}(\mb{X})=\sum_{0\le|\alpha|\le\deg(P)}\frac{(\partial^\alpha\wt{P})(a)}{\alpha!}(\mb{X}-a)^\alpha.
	\]
	Since \(\wt{P}(\mb{X})\) is a \((t,0)\)-exact polynomial cover for \(\{a\}\), we have \((\partial^\alpha\wt{P})(a)=0\) for \(|\alpha|<t\).  This gives
	\[
	\wt{P}(\mb{X})=\sum_{t\le|\alpha|\le\deg(P)}\frac{(\partial^\alpha\wt{P})(a)}{\alpha!}(\mb{X}-a)^\alpha,
	\]
	which implies \(\deg(P)\ge t\).  Further, this lower bound is tight; for instance, the polynomial \(P(\mb{X})\coloneqq\big(\sum_{i=1}^nX_i-w\big)^t\) witnesses the lower bound.
\end{proof}

\section{Conclusion and open questions}\label{sec:conclusion}

In this work, we have proved Theorem~\ref{thm:EPC-PDC}, which also subsumes our other main results (Theorem~\ref{thm:multiplicity-symmetric} and Theorem~\ref{thm:multiplicity-block-symmetric}).  We note that Theorem~\ref{thm:EPC-PDC} characterizes the tight bound for the \((t,t-1)\)-block exact polynomial cover, but in the special cases of Theorem~\ref{thm:multiplicity-symmetric} and Theorem~\ref{thm:multiplicity-block-symmetric}, our tight example specializes to the tight example for the \((t,t-1)\)-exact hyperplane cover.  Therefore, as seen in the earlier work of Alon and F\"uredi~\cite{alon-furedi} as well as initial attempts in~\cite{venkitesh-2022-covering,ghosh-kayal-nandi-2023-covering}, solving the \emph{weaker} polynomial covering problem by the polynomial method indeed solves the \emph{stronger} hyperplane covering problem in these settings.

Some of the obvious questions that seem beyond the proof technique employed in this work are the following.
\begin{question}
	In the broad generality of the polynomial covering problem considered for PDC blockwise symmetric sets, is the \emph{degeneracy condition} necessary?  More precisely, for any nonempty PDC \(k\)-wise symmetric set \(S\subseteq\{0,1\}^N\) and \(t\ge1\), is it true that \(\bEPC_{(n_1,\ldots,n_k)}^{(t,t-1)}(S)=\EPC_N^{(t,t-1)}(S)\)?
	
	We believe this could be true.
\end{question}
\begin{question}
	How do we solve the hyperplane covering problem considered for PDC blockwise symmetric sets?  In other words, do Theorem~\ref{thm:multiplicity-symmetric} and Theorem~\ref{thm:multiplicity-block-symmetric} extend to the setting of Theorem~\ref{thm:EPC-PDC} for the exact hyperplane cover version?  More precisely, for any nonempty PDC \(k\)-wise symmetric set \(S\subseteq\{0,1\}^N\) and \(t\ge1\), is it true that \(\bEPC_{(n_1,\ldots,n_k)}^{(t,t-1)}(S)=\bEHC_{(n_1,\ldots,n_k)}^{(t,t-1)}(S)\)?
	
	Our work shows that our proof technique can not possibly extend to prove this.  We therefore believe this may not be true.
\end{question}
\begin{question}
	Characterize the index complexity of all nonempty PDC \(k\)-wise symmetric sets.  In other words, obtain the characterization without requiring the \emph{outer intact} condition in Proposition~\ref{pro:PDC-index-complexity}.
\end{question}

\appendix

\section{Covering by hyperplanes and polynomials}

Let us give quick proofs of some simple statements that broadly assert that a hyperplane cover is a stronger notion than a polynomial cover.

\subsection{Exact hyperplane and polynomial covers: \(\EHC^{(t,\ell)}_n\ge\EPC^{(t,\ell)}_n\)}\label{app:EHC-EPC}

Consider \(S\subsetneq\{0,1\}^n\), and \(t\ge1,\,\ell\in[0,t-1]\).  Let us prove that \(\EHC_n^{(t,\ell)}(S)\ge\EPC_n^{(t,\ell)}(S)\).  Let \(\mc{H}(\mb{X})=\{H_1(\mb{X}),\ldots,H_k(\mb{X})\}\) be a \((t,\ell)\)-exact hyperplane cover for \(S\).  We have
\begin{itemize}
	\item  \(|\{i\in[k]:H_i(a)=0\}|\ge t\) for all \(a\in S\).
	\item  \(|\{i\in[k]:H_i(b)=0\}|=\ell\) for all \(b\in\{0,1\}^n\setminus S\).
\end{itemize}
Now consider the polynomial \(\mc{H}(X)=H_1(\mb{X})\cdots H_k(\mb{X})\).
\begin{enumerate}[(a)]
	\item  Fix any \(a\in S\) and \(\alpha\in\mb{N}^n,\,|\alpha|\le t-1\).  We have by the product rule for derivatives,
	\[
	(\partial^\alpha\mc{H})(a)=\sum_{\substack{\gamma^{(1)},\ldots,\gamma^{(k)}\in\mb{N}^n\\\gamma^{(1)}+\cdots+\gamma^{(k)}=\alpha}}\binom{\alpha}{\gamma^{(1)}\n\cdots\n\gamma^{(k)}}(\partial^{\gamma^{(1)}}H_1)(a)\cdots(\partial^{\gamma^{(k)}}H_k)(a).
	\]
	For each \(\gamma^{(1)},\ldots,\gamma^{(k)}\in\mb{N}^n\) with \(\gamma^{(1)}+\cdots+\gamma^{(k)}=\alpha\), since \(|\{i\in[k]:H_i(a)=0\}|\ge t\) and \(|\gamma^{(1)}|+\cdots+|\gamma^{(k)}|=|\alpha|\le t-1\), there exists \(i\in[k]\) such that \(\gamma^{(i)}=0^n\).  This implies \((\partial^{\gamma^{(1)}}H_1)(a)\cdots(\partial^{\gamma^{(k)}}H_k)(a)=0\).  Thus, \((\partial^\alpha\mc{H})(a)=0\).
	
	\item  Fix any \(b\in\{0,1\}^n\setminus S\).  Since \(|\{i\in[k]:H_i(b)=0\}|=\ell\), by the argument above, we get \((\partial^\beta\mc{H})(b)=0\) for every \(\beta\in\mb{N}^n,\,|\beta|\le\ell-1\).  Now recall that the collection \(\mc{H}(\mb{X})\) is a multiset, and suppose we alternatively represent \(\mc{H}(\mb{X})=\{(F_1(\mb{X}))^{(m_1)},\ldots,(F_v(\mb{X}))^{(m_v)}\}\), where \(F_1(\mb{X}),\ldots,F_v(\mb{X})\) are distinct, and \((F_u(\mb{X}))^{(m_u)}\) (for \(u\in[v]\)) indicates \(m_u\) many copies of \(F_u(\mb{X})\).  Then the condition \(|\{i\in[k]:H_i(b)=0\}|=\ell\) implies that there exists a subset \(U\subseteq[v]\) such that \(\sum_{u\in U}m_u=\ell\), and \(F_u(b)=0\) exactly when \(u\in U\).  Further, by definition, we also have the inequality \(\ell\le t-1<k\).  This means \(U\subsetneq[v]\), and \(F_{u'}(b)\ne0\) for all \(u'\in[v]\setminus U\).  Without loss of generality, we may assume \(U=[v']\) for some \(v'\in[0,v-1]\).
	
	So we have the following.
	\begin{itemize}
		\item  \(F_u(b)=0\) if \(u\in[v']\), and \(F_u(b)\ne0\) if \(u\in[v'+1,v]\).
		\item  \(\sum_{u=1}^{v'}m_u=\ell\).
	\end{itemize}
	
	Now for each \(u\in[v']\), since \(F_u(\mb{X})\) is an affine linear polynomial, let \(i_u\in[n]\) be the least such that \(\coeff(X_{i_u},F_u)\ne0\).  Define \(\nu=\sum_{u=1}^{v'}m_ue_{i_u}\).  Then we get
	\[
	(\partial^\nu\mc{H})(b)=\prod_{u=1}^{v'}(\coeff(X_{i_u},F_u))^{m_u}\ne0,
	\]
	where \(|\nu|=\sum_{u=1}^{v'}m_u=\ell\).
\end{enumerate}
Thus, \(\mc{H}(\mb{X})\) is a \((t,\ell)\)-exact polynomial cover for \(S\).  This completes the proof.

\subsection{Block exact hyperplane and polynomial covers: \(\bEHC^{(t,\ell)}_{(n_1,\ldots,n_k)}\ge\bEPC^{(t,\ell)}_{(n_1,\ldots,n_k)}\)}\label{app:bEHC-bEPC}

Let \(\{0,1\}^N=\{0,1\}^{n_1}\times\cdots\times\{0,1\}^{n_k}\).  Consider \(S\subsetneq\{0,1\}^N\), and \(t\ge1,\,\ell\in[0,t-1]\).  Let us prove that \(\bEHC_{(n_1,\ldots,n_k)}^{(t,\ell)}(S)\ge\bEPC_{(n_1,\ldots,n_k)}^{(t,\ell)}(S)\).  Let \(\mc{H}(\mb{X})=\{H_1(\mb{X}),\ldots,H_k(\mb{X})\}\) be a \((t,\ell)\)-block exact hyperplane cover for \(S\).  We have
\begin{itemize}
	\item  \(|\{i\in[k]:H_i(a)=0\}|\ge t\) for all \(a\in S\).
	\item  \(|\{i\in[k]:H_i(b)=0\}|=\ell\) for all \(b\in\{0,1\}^N\setminus S\).
	\item  for each \(j\in[k]\), and every \(a\in\{0,1\}^{n_1}\times\cdots\times\{0,1\}^{n_{j-1}}\times\{0,1\}^{n_{j+1}}\times\cdots\times\{0,1\}^{n_k}\), we have \(|\mc{H}(a,\mb{X}_j)|=|\mc{H}(\mb{X})|\).
\end{itemize}
Now consider the polynomial \(\mc{H}(\mb{X})=H_1(\mb{X})\cdots H_k(\mb{X})\).
\begin{enumerate}[(a)]
	\item  repeating the argument as in Appendix~\ref{app:EHC-EPC}(a), we can show that \((\partial^\alpha\mc{H})(a)=0\) for any \(a\in S\) and \(\alpha\in\mb{N}^N,\,|\alpha|\le t-1\).  So \(\mult(\mc{H}(\mb{X}),a)\ge t\) for all \(a\in S\).
	
	\item  Fix any \(b\in\{0,1\}^N\setminus S\).  Again, repeating the argument as in Appendix~\ref{app:EHC-EPC}(b), we can show that \((\partial^\beta\mc{H})(b)=0\) for every \(\beta\in\mb{N}^N,\,|\beta|\le\ell-1\).  Further, now fix any \(j\in[k]\), and denote \(b=(b',\wt{b})\), where \(b'\in\{0,1\}^{n_1}\times\cdots\times\{0,1\}^{n_{j-1}}\times\{0,1\}^{n_{j+1}}\times\cdots\times\{0,1\}^{n_k},\,\wt{b}\in\{0,1\}^{n_j}\).  This immediately gives \(\partial^{\wt{\beta}}\mc{H}(b',\mb{X}_j)\big|_{\wt{b}}=0\) for every \(\wt{\beta}\in\mb{N}^{n_j},\,|\wt{\beta}|\le\ell-1\).  Further, we have \(|\mc{H}(b',\mb{X}_j)|=|\mc{H}(\mb{X})|\).  This implies
	\[
	|\{H(b',\mb{X}_j)\in\mc{H}(b',\mb{X}_j):H(b',\wt{b})=0\}|=|\{H(\mb{X})\in\mc{H}(\mb{X}):H(b)=0\}|=\ell.
	\]
	Now repeating the argument as in Appendix~\ref{app:EHC-EPC}(b) over the hypercube \(\{0,1\}^{n_j}\), for the point \(\wt{b}\in\{0,1\}^{n_j}\), we can show that there exists \(\nu\in\mb{N}^{n_j},\,|\nu|=\ell\) such that \(\partial^{\nu}\mc{H}(b',\mb{X}_j)\big|_{\wt{b}}\ne0\).  So \(\mult(\mc{H}(b',\mb{X}_j),\wt{b})=\ell\).
\end{enumerate}
Thus, \(\mc{H}(\mb{X})\) is a \((t,\ell)\)-block exact polynomial cover for \(S\).  This completes the proof.

\section{An exact hyperplane cover: proof of Lemma~\ref{lem:T}}\label{app:GKN-hyperplane}

Let us give a proof of the exact hyperplane cover for the symmetric set \(T_{n,i}\subseteq\{0,1\}^n,\,i\in[0,\lceil n/2\rceil]\), constructed by~\cite{ghosh-kayal-nandi-2023-covering}, given in Lemma~\ref{lem:T}.  We mention the statement again, for convenience.
\begin{replemma}[\ref{lem:T}]
	For \(i\in[0,\lceil n/2\rceil]\), the collection of hyperplanes \(\{H^*_{(i,j)}(\mb{X}):j\in[i]\}\) defined by
	\[
	H^*_{(i,j)}(\mb{X})=\sum_{k=1}^{n-j}X_k-(n-2i+j)X_{n-j+1}-(i-j),\quad j\in[i],
	\]
	satisfies the following.
	\begin{itemize}
		\item[\(\bullet\)]  For every \(a\in T_{n,i}\), there exists \(j\in[i]\) such that \(H^*_{(i,j)}(a)=0\).
		\item[\(\bullet\)]  \(H^*_{(i,j)}(b)\ne0\) for every \(b\in\{0,1\}^n\setminus T_{n,i},\,j\in[i]\).
	\end{itemize}
\end{replemma}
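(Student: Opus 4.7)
\textbf{Proof proposal for Lemma~\ref{lem:T}.}  The plan is to first reduce each hyperplane equation $H^*_{(i,j)}(a)=0$ to a simple condition about partial sums of $a$, and then to verify both bullet points by direct counting.

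First I would fix any $a \in \{0,1\}^n$ and write $s_j(a) \coloneqq \sum_{k=1}^{n-j} a_k$, so that
\[
H^*_{(i,j)}(a) = s_j(a) - (n-2i+j)\,a_{n-j+1} - (i-j).
\]
Splitting on the value $a_{n-j+1} \in \{0,1\}$, this gives the clean dichotomy
\[
H^*_{(i,j)}(a) = 0
\iff
\begin{cases}
s_j(a) = i-j & \text{if } a_{n-j+1}=0,\\
s_j(a) = n-i & \text{if } a_{n-j+1}=1.
\end{cases}
\]
This is the workhorse identity; everything else is routine counting.

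For the first bullet, I would fix $a \in T_{n,i}$ and split into two subcases based on $|a|$.  If $|a| = i-j_0$ with $j_0 \in [i]$, I will pick $j$ so that position $n-j+1$ records the $j_0$-th zero of $a$ scanned from the right; then $a_{n-j+1}=0$ and the last $j-1$ positions contain exactly $j_0-1$ zeros, so the first $n-j$ positions contain $|a| - (j-j_0) = i-j$ ones, i.e.\ $s_j(a) = i-j$.  The check that $j \le i$ reduces to showing that the last $i$ positions of $a$ contain at least $j_0$ zeros, which holds because the first $n-i$ positions contain at most $n-i$ zeros while the total is $n-i+j_0$.  The subcase $|a| = n-i+j_0$ is the mirror image: pick $j$ so that $n-j+1$ is the position of the $j_0$-th one from the right; then $a_{n-j+1}=1$ and $s_j(a) = |a| - j_0 = n-i$, and again $j \le i$ by the symmetric counting argument on ones.

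For the second bullet, I would contrapose and show that if $H^*_{(i,j)}(b)=0$ for some $j\in[i]$, then $b \in T_{n,i}$.  Using the dichotomy, in the first case ($b_{n-j+1}=0$ and $s_j(b)=i-j$) the number of ones in the last $j-1$ positions is $|b| - (i-j)$, and bounding this by $j-1$ forces $|b| \le i-1$.  In the second case ($b_{n-j+1}=1$ and $s_j(b)=n-i$) the nonnegativity of the count of ones among the last $j-1$ positions gives $|b| \ge n-i+1$.  Either way, $|b| \in W_{n,i}$, hence $b \in T_{n,i}$.

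The only mildly subtle step is the range check $j \in [i]$ in the first bullet; the rest is bookkeeping.  I do not anticipate a real obstacle, since the identity for $H^*_{(i,j)}(a)$ makes the two bullets follow from elementary inequalities on the distribution of zeros and ones in $a$.
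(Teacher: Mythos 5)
Your proposal is correct and follows essentially the same route as the paper's proof: the same dichotomy on the value of $a_{n-j+1}$, and the same choice of $j$ in the first bullet (your ``$j_0$-th zero/one from the right'' is exactly the paper's ``$(n-i+1)$-th zero/one from the left''). The only cosmetic difference is that for the second bullet you argue by contraposition, deducing $|b|\in W_{n,i}$ from $H^*_{(i,j)}(b)=0$, whereas the paper directly bounds $H^*_{(i,j)}(b)$ away from zero for $|b|\in[i,n-i]$; the underlying counting is identical.
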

\begin{proof}
	For any \(a\in\{0,1\}^n\), denote \(I_0(a)=\{t\in[n]:a_t=0\}\) and \(I_1(a)=\{t\in[n]:a_t=1\}\).  Consider any \(a\in T_{n,i}\).  So \(|a|\in[0,i-1]\cup[n-i+1,n]\).  We have two cases.
	\begin{enumerate}[(a)]
		\item  \(|a|\in[0,i-1]\).  Then \(|I_0(a)|\ge n-i+1\).  Let \(t_0\) be the \((n-i+1)\)-th element in \(I_0(a)\).  This means \(t_0\in[n-i+1,n]\), which implies that there exists \(j\in[i]\) such that \(t_0=n-j+1\).  So \(a_{n-j+1}=a_{t_0}=0\).  Further, by definition of \(t_0\), we get
		\[
		|a_{[1,n-j]}|=|a_{[1,n-j+1]}|=(n-j+1)-(n-i+1)=i-j.
		\]
		Thus, \(H^*_{(i,j)}(a)=(i-j)-(n-2i+j)\cdot0-(i-j)=0\).
		
		\item  \(|a|\in[n-i+1,n]\).  Then \(|I_1(a)|\ge n-i+1\).  Let \(t_1\) be the \((n-i+1)\)-th element in \(I_1(a)\).  This means \(t_1\in[n-i+1,n]\), which implies that there exists \(j\in[i]\) such that \(t_0=n-j+1\).  So \(a_{n-j+1}=a_{t_1}=1\).  Further, by definition of \(t_1\), we get
		\[
		|a_{[1,n-j]}|=|a_{[1,n-j+1]}|-1=(n-i+1)-1=n-i.
		\]
		Thus, \(H^*_{(i,j)}(a)=(n-i)-(n-2i+j)\cdot1-(i-j)=0\).  
	\end{enumerate}
	Now consider any \(b\in\{0,1\}^n\setminus T_{n,i}\).  So \(|b|\in[i,n-i]\).  Fix any \(j\in[i]\).  We have two cases.
	\begin{enumerate}[(a)]
		\item  \(b_{n-j+1}=0\).  Then \(|b_{[1,n-j]}|\in[i,n-i]\), and so
		\[
		H^*_{(i,j)}(b)\in[i-(i-j),n-i-(i-j)]=[j,n-2i+j],
		\]
		which implies \(H^*_{(i,j)}(b)\ge j\ge1\).
		
		\item  \(b_{n-j+1}=1\).  Then \(|b_{[1,n-j]}|\in[i-1,n-i-1]\), and so
		\[
		H^*_{(i,j)}(b)\in[i-1-(n-2i+j)-(i-j),n-i-1-(n-2i+j)-(i-j)]=[2i-n-1,-1],
		\]
		which implies \(H^*_{(i,j)}(b)\le-1\).
	\end{enumerate}
	This completes the proof.
\end{proof}

\section{Inner and outer intervals of symmetric sets}\label{app:inner-outer}

Here we discuss some ancillary details about inner and outer intervals of symmetric sets.

\subsection{Uniqueness of inner and outer intervals}

Let \(S\subseteq\{0,1\}^n\) be a symmetric set.  It is immediate that \(\innint(\emptyset)=\outint(\emptyset)=J_{n,-1,n+1}=\emptyset\).  It is also immediate that \(\innint(\{0,1\}^n)=\outint(\{0,1\}^n)=J_{n,\lfloor n/2\rfloor,\lfloor n/2\rfloor+1}=\{0,1\}^n\).

Now consider a nonempty symmetric set \(S\subsetneq\{0,1\}^n\).  We note the following.
\begin{itemize}
	\item  There exists \(w\in[0,n]\) such that \(w\not\in W_n(S)\), that is, \(W_n(S)\subseteq[0,w-1]\cup[w+1,n]\).  Let
	\begin{align*}
		a&=\max\{a'\in[0,w-1]:[0,a]\subseteq W_n(S)\}&\tx{with the convention }\max(\emptyset)&=-1,\\
		b&=\min\{b'\in[w+1,n]:[b',n]\subseteq W_n(S)\}&\tx{with the convention }\min(\emptyset)&=n+1.
	\end{align*}
	Then it follows immediately by definition that \(\innint(S)=J_{n,a,b}\), and is therefore unique.
	
	\item  Recall that \(\mc{O}(S)\) is the collection of all peripheral intervals \(J_{n,a,b}\) such that \(S\subseteq J_{n,a,b}\) and \(I_{n,a,b}=W_n(J_{n,a,b})\) has minimum size.  Now consider the function \(\lambda_S:\mc{O}(S)\to\mb{N}\) defined by
	\[
	\lambda_S(J_{n,a,b})=|a+b-n|,\quad\tx{for all }J_{n,a,b}\in\mc{O}(S).
	\]
	We observe a simple property of the minimizer of \(\lambda_S\).
	\begin{observation}\label{obs:outer-unique}
		The minimizer of \(\lambda_S\) is either a unique peripheral interval \(J_{n,a,b}\), or exactly a pair of peripheral intervals \(\{J_{n,a,b},J_{n,n-b,n-a}\}\).  
	\end{observation}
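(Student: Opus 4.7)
The plan is to exploit the rigidity imposed by the common minimum-size constraint defining \(\mc{O}(S)\). First I would observe that
\[
|I_{n,a,b}| = (a+1) + (n-b+1) = n - (b-a) + 2
\]
depends only on the difference \(b-a\) (using the conventions \([0,-1] = [n+1,n] = \emptyset\)). So the minimality in the definition of \(\mc{O}(S)\) pins down a \emph{common} gap-width value \(D := b - a\) shared by every element of \(\mc{O}(S)\).  Consequently, on \(\mc{O}(S)\) the function reduces to
\[
\lambda_S(J_{n,a,b}) = |a+b-n| = |2a + D - n|,
\]
which depends on \(a\) alone, turning the minimization into a one-dimensional problem.

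The main step is to compare two arbitrary distinct minimizers \(J_{n,a_1,b_1}\) and \(J_{n,a_2,b_2}\) of \(\lambda_S\). Distinctness combined with the common value of \(D\) forces \(a_1 \neq a_2\), and the equality \(|2a_1 + D - n| = |2a_2 + D - n|\) then leaves only the opposite-sign case \(2a_1 + D - n = -(2a_2 + D - n)\). This rearranges to \(a_1 + a_2 = n - D\), giving \(a_2 = n - b_1\) and hence \(b_2 = a_2 + D = n - a_1\). So any second minimizer, if one exists, must equal \(J_{n, n-b_1, n-a_1}\) precisely, as claimed.

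To finish, I would note that the same argument forbids a third minimizer: it would have to be the ``partner'' of \(J_{n,a_1,b_1}\) by the displayed reasoning, and therefore coincide with \(J_{n,a_2,b_2}\), contrary to assumption. Hence the minimizer set has cardinality exactly one or exactly two, and in the two-element case takes the form \(\{J_{n,a,b}, J_{n,n-b,n-a}\}\). I do not anticipate a serious obstacle; the argument is purely elementary, and the only mild technicality is checking that the boundary conventions \(a=-1\) or \(b=n+1\) do not break the algebra—since every step only uses the cardinality formula \((a+1)+(n-b+1)\) and the affine identity for \(\lambda_S\), both of which remain valid under these conventions.
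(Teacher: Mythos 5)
Your proposal is correct and follows essentially the same route as the paper's proof: the fixed size of \(I_{n,a,b}\) forces a common difference \(b-a\) among all elements of \(\mc{O}(S)\), and the equation \(|a+b-n|=|a'+b'-n|\) then splits into the identity case and the reflected case \((a',b')=(n-b,n-a)\). Your explicit remark that a third minimizer is excluded (since the partner map is an involution) is a small addition the paper leaves implicit, but the substance is identical.
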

	\begin{proof}
		Suppose the minimizer of \(\lambda_S\) is not unique, that is, there are two distinct minimizers \(J_{n,a,b},\,J_{n,a',b'}\in\mc{O}(S)\).  Then by definition of \(\mc{O}(S)\), we already have \(|I_{n,a,b}|=|I_{n,a',b'}|\), which implies \(a-b=a'-b'\).  So there exists \(h\in\mb{Z}\) such that \(a'=a+h,\,b'=b+h\).  Further, by the minimization of \(\lambda_S\), we have \(|a+b-n|=|a'+b'-n|\), which yields two cases.
		\begin{enumerate}[(a)]
			\item  \(a+b-n=a'+b'-n\), that is, \(a+b=a'+b'\).  This implies \(h=0\), and so \(a'=a,\,b'=b\).
			\item  \(a+b-n=n-a'-b'\), that is, \(a+b=2n-(a'+b')\).  This implies \(h=n-(a+b)\), and so \(a'=n-b,\,b'=n-a\).
		\end{enumerate}
		This completes the proof.
	\end{proof}
	Recall that \(\outint(S)\) is defined by
	\[
	\outint(S)=\begin{cases}
		J_{n,a,b}&\tx{if }J_{n,a,b}\tx{ is the unique minimizer of }\lambda_S,\\
		J_{n,a,b}&\tx{if }\{J_{n,a,b},J_{n,n-b,n-a}\}\tx{ are minimizers of }\lambda_S\tx{ and }a>n-b.
	\end{cases}
	\]
	Thus, by Observation~\ref{obs:outer-unique}, it is immediate that \(\outint(S)\) is unique.
\end{itemize}

\subsection{Illustrations of inner and outer intervals}

Let us illustrate some examples of inner and outer intervals.  Figure~\ref{fig:inner-outer-interval} shows two typical symmetric sets -- \emph{one-sided} and \emph{two-sided} -- and their inner and outer intervals.

	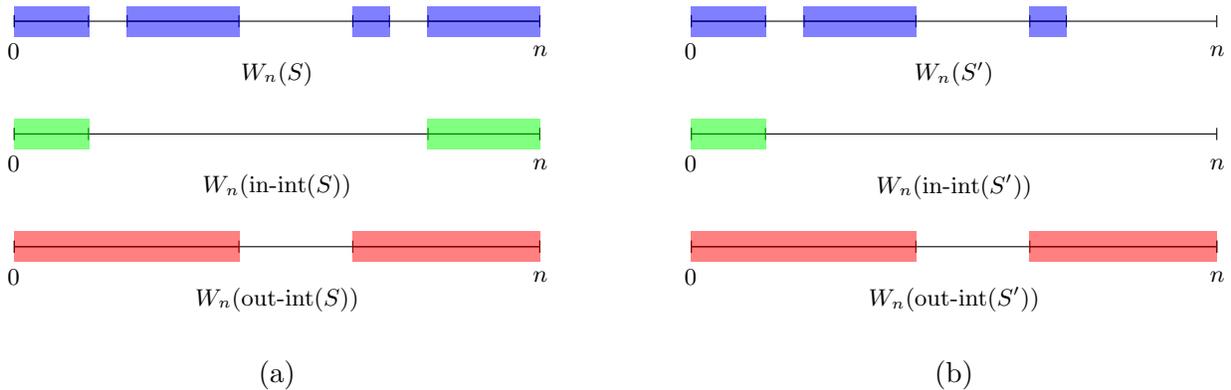
\begin{figure}[htbp]
		\begin{tikzpicture}
			\draw[|-|] (0,0)--(7,0);
			\draw[|-|] (0,0)--(1,0);
			\fill[opacity=0.5,blue] (0,0.2)--(1,0.2)--(1,-0.2)--(0,-0.2);
			\draw[|-|] (1.5,0)--(3,0);
			\fill[opacity=0.5,blue] (1.5,0.2)--(3,0.2)--(3,-0.2)--(1.5,-0.2);
			\draw[|-|] (4.5,0)--(5,0);
			\fill[opacity=0.5,blue] (4.5,0.2)--(5,0.2)--(5,-0.2)--(4.5,-0.2);
			\draw[|-|] (5.5,0)--(7,0);
			\fill[opacity=0.5,blue] (5.5,0.2)--(7,0.2)--(7,-0.2)--(5.5,-0.2);
			\node at (0,-0.4) {\footnotesize\(0\)};
			\node at (7,-0.4) {\footnotesize\(n\)};
			\node at (3.5,-0.5-0.2) {\footnotesize\(W_n(S)\)};
			
			\draw[|-|] (0,-1.5)--(7,-1.5);
			\draw[|-|] (0,0-1.5)--(1,0-1.5);
			\fill[opacity=0.5,green] (0,0.2-1.5)--(1,0.2-1.5)--(1,-0.2-1.5)--(0,-0.2-1.5);
			\draw[|-|] (5.5,0-1.5)--(7,0-1.5);
			\fill[opacity=0.5,green] (5.5,0.2-1.5)--(7,0.2-1.5)--(7,-0.2-1.5)--(5.5,-0.2-1.5);
			\node at (0,-0.4-1.5) {\footnotesize\(0\)};
			\node at (7,-0.4-1.5) {\footnotesize\(n\)};
			\node at (3.5,-0.5-1.5-0.2) {\footnotesize\(W_n(\innint(S))\)};
			
			\draw[|-|] (0,0-3)--(7,0-3);
			\draw[|-|] (0,0-3)--(3,0-3);
			\fill[opacity=0.5,red] (0,0.2-3)--(3,0.2-3)--(3,-0.2-3)--(0,-0.2-3);
			\draw[|-|] (4.5,0-3)--(7,0-3);
			\fill[opacity=0.5,red] (4.5,0.2-3)--(7,0.2-3)--(7,-0.2-3)--(4.5,-0.2-3);
			\node at (0,-0.4-3) {\footnotesize\(0\)};
			\node at (7,-0.4-3) {\footnotesize\(n\)};
			\node at (3.5,-0.5-3-0.2) {\footnotesize\(W_n(\outint(S))\)};
			\node at (3.5,-0.5-3-0.2-1) {(a)};
			
			\draw[|-|] (0+9,0)--(7+9,0);
			\draw[|-|] (0+9,0)--(1+9,0);
			\fill[opacity=0.5,blue] (0+9,0.2)--(1+9,0.2)--(1+9,-0.2)--(0+9,-0.2);
			\draw[|-|] (1.5+9,0)--(3+9,0);
			\fill[opacity=0.5,blue] (1.5+9,0.2)--(3+9,0.2)--(3+9,-0.2)--(1.5+9,-0.2);
			\draw[|-|] (4.5+9,0)--(5+9,0);
			\fill[opacity=0.5,blue] (4.5+9,0.2)--(5+9,0.2)--(5+9,-0.2)--(4.5+9,-0.2);
			\node at (0+9,-0.4) {\footnotesize\(0\)};
			\node at (7+9,-0.4) {\footnotesize\(n\)};
			\node at (3.5+9,-0.5-0.2) {\footnotesize\(W_n(S')\)};
			
			\draw[|-|] (0+9,-1.5)--(7+9,-1.5);
			\draw[|-|] (0+9,0-1.5)--(1+9,0-1.5);
			\fill[opacity=0.5,green] (0+9,0.2-1.5)--(1+9,0.2-1.5)--(1+9,-0.2-1.5)--(0+9,-0.2-1.5);
			\node at (0+9,-0.4-1.5) {\footnotesize\(0\)};
			\node at (7+9,-0.4-1.5) {\footnotesize\(n\)};
			\node at (3.5+9,-0.5-1.5-0.2) {\footnotesize\(W_n(\innint(S'))\)};
			
			\draw[|-|] (0+9,0-3)--(7+9,0-3);
			\draw[|-|] (0+9,0-3)--(3+9,0-3);
			\fill[opacity=0.5,red] (0+9,0.2-3)--(3+9,0.2-3)--(3+9,-0.2-3)--(0+9,-0.2-3);
			\draw[|-|] (4.5+9,0-3)--(7+9,0-3);
			\fill[opacity=0.5,red] (4.5+9,0.2-3)--(7+9,0.2-3)--(7+9,-0.2-3)--(4.5+9,-0.2-3);
			\node at (0+9,-0.4-3) {\footnotesize\(0\)};
			\node at (7+9,-0.4-3) {\footnotesize\(n\)};
			\node at (3.5+9,-0.5-3-0.2) {\footnotesize\(W_n(\outint(S'))\)};
			\node at (3.5+9,-0.5-3-0.2-1) {(b)};
		\end{tikzpicture}
		\caption{(a) a \emph{two-sided} symmetric set \(S\), and (b) a \emph{one-sided} symmetric set \(S'\)}
		\label{fig:inner-outer-interval}
	\end{figure}
	
	\newpage
	Note that Figure~\ref{fig:inner-outer-interval} is a typical illustration.  The inner and outer intervals are special when \(S\) itself is either a peripheral interval or the complement of a peripheral interval.  Figure~\ref{fig:peripheral-interval-type-I} shows the inner and outer intervals of a \emph{two-sided} peripheral interval and its complement.

		\begin{figure}[htbp]
		\begin{tikzpicture}
			\draw[|-|] (0,0)--(7,0);
			\draw[|-|] (0,0)--(1,0);
			\fill[opacity=0.5,blue] (0,0.2)--(1,0.2)--(1,-0.2)--(0,-0.2);
			\draw[|-|] (4.5,0)--(7,0);
			\fill[opacity=0.5,blue] (4.5,0.2)--(7,0.2)--(7,-0.2)--(4.5,-0.2);
			\node at (0,-0.4) {\footnotesize\(0\)};
			\node at (7,-0.4) {\footnotesize\(n\)};
			\node at (3.5,-0.5-0.2) {\footnotesize\(W_n(J_{n,a,b})\)};
			
			\draw[|-|] (0,-1.5)--(7,-1.5);
			\draw[|-|] (0,0-1.5)--(1,0-1.5);
			\fill[opacity=0.5,green] (0,0.2-1.5)--(1,0.2-1.5)--(1,-0.2-1.5)--(0,-0.2-1.5);
			\draw[|-|] (4.5,0-1.5)--(7,0-1.5);
			\fill[opacity=0.5,green] (4.5,0.2-1.5)--(7,0.2-1.5)--(7,-0.2-1.5)--(4.5,-0.2-1.5);
			\node at (0,-0.4-1.5) {\footnotesize\(0\)};
			\node at (7,-0.4-1.5) {\footnotesize\(n\)};
			\node at (3.5,-0.5-1.5-0.2) {\footnotesize\(W_n(\innint(J_{n,a,b}))\)};
			
			\draw[|-|] (0,0-3)--(7,0-3);
			\draw[|-|] (0,0-3)--(1,0-3);
			\fill[opacity=0.5,red] (0,0.2-3)--(1,0.2-3)--(1,-0.2-3)--(0,-0.2-3);
			\draw[|-|] (4.5,0-3)--(7,0-3);
			\fill[opacity=0.5,red] (4.5,0.2-3)--(7,0.2-3)--(7,-0.2-3)--(4.5,-0.2-3);
			\node at (0,-0.4-3) {\footnotesize\(0\)};
			\node at (7,-0.4-3) {\footnotesize\(n\)};
			\node at (3.5,-0.5-3-0.2) {\footnotesize\(W_n(\outint(J_{n,a,b}))\)};
			\node at (3.5,-0.5-3-0.2-1) {(a)};
			
			\draw[|-|] (0+9,0)--(7+9,0);
			\draw[|-|] (1+9,0)--(4.5+9,0);
			\fill[opacity=0.5,blue] (1+9,0.2)--(4.5+9,0.2)--(4.5+9,-0.2)--(1+9,-0.2);
			\node at (0+9,-0.4) {\footnotesize\(0\)};
			\node at (7+9,-0.4) {\footnotesize\(n\)};
			\node at (3.5+9,-0.5-0.2) {\footnotesize\(W_n(\{0,1\}^n\setminus J_{n,a,b})\)};
			
			\draw[|-|] (0+9,-1.5)--(7+9,-1.5);
			\node at (0+9,-0.4-1.5) {\footnotesize\(0\)};
			\node at (7+9,-0.4-1.5) {\footnotesize\(n\)};
			\node at (3.5+9,-0.5-1.5-0.2) {\footnotesize\(W_n(\innint(\{0,1\}^n\setminus J_{n,a,b}))=\emptyset\)};
			
			\draw[|-|] (0+9,0-3)--(7+9,0-3);
			\draw[dotted,thick] (3.5+9,-0.2-3)--(3.5+9,0.2-3);
			\fill[opacity=0.5,red] (0+9,0.2-3)--(3.5+9,0.2-3)--(3.5+9,-0.2-3)--(0+9,-0.2-3);
			\fill[opacity=0.5,red] (3.5+9,0.2-3)--(7+9,0.2-3)--(7+9,-0.2-3)--(3.5+9,-0.2-3);
			\node at (0+9,-0.4-3) {\footnotesize\(0\)};
			\node at (7+9,-0.4-3) {\footnotesize\(n\)};
			\node at (3.5+9,-0.5-3-0.2) {\footnotesize\(W_n(\outint(\{0,1\}^n\setminus J_{n,a,b}))=[0,n]\)};
			\node at (3.5+9,-0.5-3-0.2-1) {(b)};
		\end{tikzpicture}
		\caption{(a) a \emph{two-sided} peripheral interval \(J_{n,a,b}\), and (b) the complement of \(J_{n,a,b}\)}
		\label{fig:peripheral-interval-type-I}
	\end{figure}
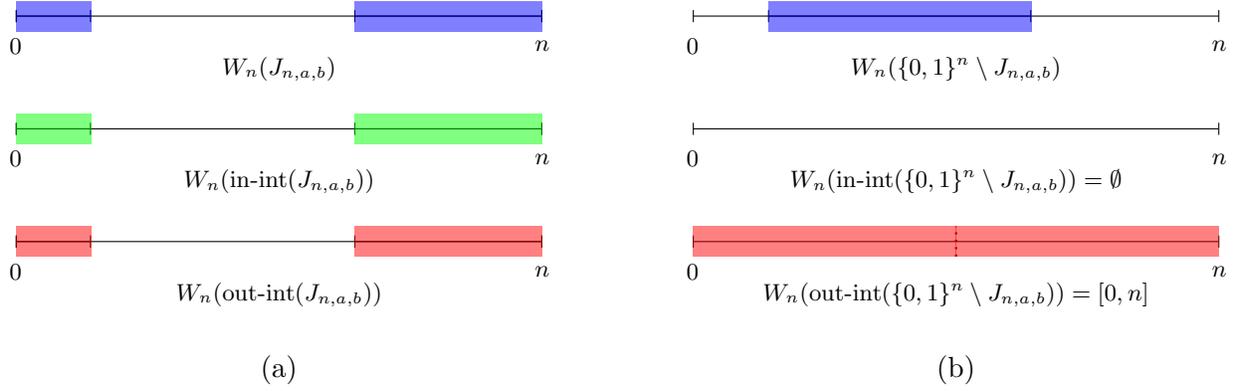

Figure~\ref{fig:peripheral-interval-type-II} shows the inner and outer intervals of a \emph{one-sided} peripheral interval and its complement.  Note that the complement of a \emph{one-sided} peripheral interval is again a \emph{one-sided} peripheral interval.

		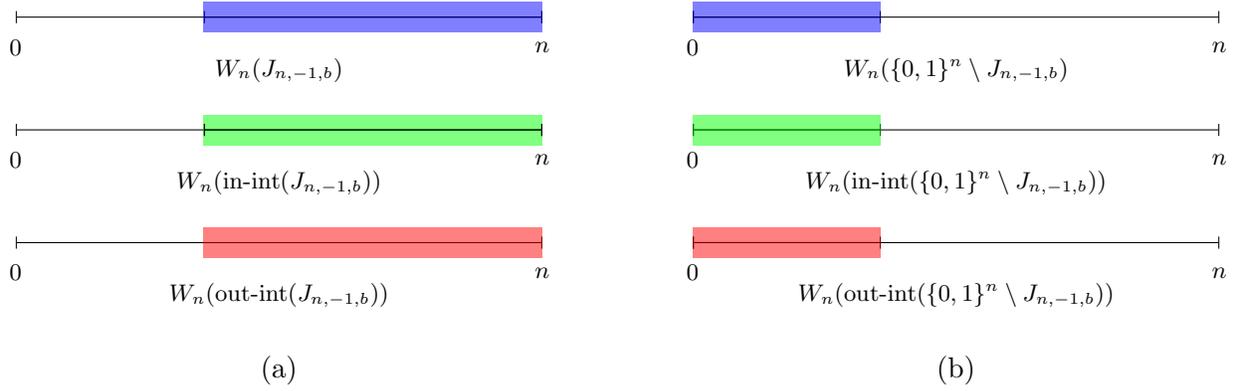
\begin{figure}[htbp]
		\begin{tikzpicture}
			\draw[|-|] (0,0)--(7,0);
			\draw[|-|] (2.5,0)--(7,0);
			\fill[opacity=0.5,blue] (2.5,0.2)--(7,0.2)--(7,-0.2)--(2.5,-0.2);
			\node at (0,-0.4) {\footnotesize\(0\)};
			\node at (7,-0.4) {\footnotesize\(n\)};
			\node at (3.5,-0.5-0.2) {\footnotesize\(W_n(J_{n,-1,b})\)};
			
			\draw[|-|] (0,-1.5)--(7,-1.5);
			\draw[|-|] (2.5,0-1.5)--(7,0-1.5);
			\fill[opacity=0.5,green] (2.5,0.2-1.5)--(7,0.2-1.5)--(7,-0.2-1.5)--(2.5,-0.2-1.5);
			\node at (0,-0.4-1.5) {\footnotesize\(0\)};
			\node at (7,-0.4-1.5) {\footnotesize\(n\)};
			\node at (3.5,-0.5-1.5-0.2) {\footnotesize\(W_n(\innint(J_{n,-1,b}))\)};
			
			\draw[|-|] (0,0-3)--(7,0-3);
			\draw[|-|] (2.5,0-1.5)--(7,0-1.5);
			\fill[opacity=0.5,red] (2.5,0.2-3)--(7,0.2-3)--(7,-0.2-3)--(2.5,-0.2-3);
			\node at (0,-0.4-3) {\footnotesize\(0\)};
			\node at (7,-0.4-3) {\footnotesize\(n\)};
			\node at (3.5,-0.5-3-0.2) {\footnotesize\(W_n(\outint(J_{n,-1,b}))\)};
			\node at (3.5,-0.5-3-0.2-1) {(a)};
			
			\draw[|-|] (0+9,0)--(7+9,0);
			\draw[|-|] (0+9,0)--(2.5+9,0);
			\fill[opacity=0.5,blue] (0+9,0.2)--(2.5+9,0.2)--(2.5+9,-0.2)--(0+9,-0.2);
			\node at (0+9,-0.4) {\footnotesize\(0\)};
			\node at (7+9,-0.4) {\footnotesize\(n\)};
			\node at (3.5+9,-0.5-0.2) {\footnotesize\(W_n(\{0,1\}^n\setminus J_{n,-1,b})\)};
			
			\draw[|-|] (0+9,-1.5)--(7+9,-1.5);
			\draw[|-|] (0+9,0-1.5)--(2.5+9,0-1.5);
			\fill[opacity=0.5,green] (0+9,0.2-1.5)--(2.5+9,0.2-1.5)--(2.5+9,-0.2-1.5)--(0+9,-0.2-1.5);
			\node at (0+9,-0.4-1.5) {\footnotesize\(0\)};
			\node at (7+9,-0.4-1.5) {\footnotesize\(n\)};
			\node at (3.5+9,-0.5-1.5-0.2) {\footnotesize\(W_n(\innint(\{0,1\}^n\setminus J_{n,-1,b}))\)};
			
			\draw[|-|] (0+9,0-3)--(7+9,0-3);
			\draw[|-|] (0+9,0-3)--(2.5+9,0-3);
			\fill[opacity=0.5,red] (0+9,0.2-3)--(2.5+9,0.2-3)--(2.5+9,-0.2-3)--(0+9,-0.2-3);
			\node at (0+9,-0.4-3) {\footnotesize\(0\)};
			\node at (7+9,-0.4-3) {\footnotesize\(n\)};
			\node at (3.5+9,-0.5-3-0.2) {\footnotesize\(W_n(\outint(\{0,1\}^n\setminus J_{n,-1,b}))\)};
			\node at (3.5+9,-0.5-3-0.2-1) {(b)};
		\end{tikzpicture}
		\caption{(a) a \emph{one-sided} peripheral interval \(J_{n,-1,b}\), and (b) the complement of \(J_{n,-1,b}\).  Note that the complement is \(\{0,1\}^n\setminus J_{n,-1,b}=J_{n,b-1,n+1}\)}
		\label{fig:peripheral-interval-type-II}
	\end{figure}

\newpage
\section{Invariance of \(\Lambda_n\) and \(r_n\) under complementation}\label{app:complement}

Let us quickly prove Fact~\ref{fact:transform}.  We can prove this fact by carefully following the definitions.  Instead, let us prove by using inner and outer intervals.  Let \(S\subseteq\{0,1\}^n\) be a symmetric set, and \(\wt{S}\) be the image of \(S\) under the coordinate transformation \((X_1,\ldots,X_n)\mapsto(1-X_1,\ldots,1-X_n)\).  This implies
\[
W_n(\wt{S})=\{n-w:w\in W_n(S)\}.
\]
So we get the following observations.
\begin{itemize}
	\item  If \(\outint(S)=J_{n,a,b}\), then \(\outint(\wt{S})=J_{n,n-b,n-a}\).
	\item  If \(\innint(S)=J_{n,a,b}\), then \(\innint(\wt{S})=J_{n,n-b,n-a}\).
\end{itemize}
Then, using Proposition~\ref{pro:index-complexity-symmetric} and Fact~\ref{fact:lambda} completes the proof of Fact~\ref{fact:transform}.

\raggedright
{\small\bibliographystyle{alpha}
\bibliography{references}}

\end{document}